\DeclareMathOperator{\interior}{Int}
\newtheorem{theorem}{Theorem}[section]
\newtheorem{corollary}[theorem]{Corollary}
\newtheorem{definition}[theorem]{Definition}
\newtheorem{lemma}[theorem]{Lemma}
\newtheorem{proposition}[theorem]{Proposition}
\newtheorem{remark}[theorem]{Remark}
\journal{ }
\begin{document}
\nocite{*}

\renewcommand{\listtablename}{\'indice de tablas}

\begin{frontmatter}



\title{A fully data-driven method for estimating density level sets}

\author[rvt]{A. Rodr\'iguez-Casal}
\author[rvt]{P. Saavedra-Nieves \corref{cor1}}
\address[rvt]{Department of Statistics and Operations Research, University of Santiago de Compostela, Spain}
\cortext[cor1]{Corresponding author: paula.saavedra@usc.es (P. Saavedra-Nieves)}

\begin{abstract}Density level sets can be estimated using plug-in methods, excess mass algorithms or a hybrid of the two previous methodologies. The plug-in algorithms are based on replacing the unknown density by some nonparametric estimator, usually the kernel. Thus, the bandwidth selection is a fundamental problem
from an applied perspective. However, if some a priori information about the geometry of the level set is available, then excess mass algorithms
could be useful. Hybrid methods such that granulometric smoothing algorithm assume a mild geometric restriction on the level set and it requires a
pilot nonparametric estimator of the density. In this work, a new hybrid algorithm is proposed under the assumption that the level set is $r-$convex. The main problem in practice is that $r$ is an unknown geometric characteristic of the set. A stochastic algorithm is proposed
for selecting its optimal value. The resulting data-driven reconstruction of the level set is able to
achieve the same convergence rates as the granulometric smoothing method. However, they do no depend on any penalty term because, although the value of the shape index $r$ is a priori unknown, it is estimated in a data-driven way from the sample points. The practical performance of the
estimator proposed is illustrated through a real data example.
\end{abstract}

\begin{keyword}Density level set  \sep $r-$convexity \sep hybrid methodology \sep leukaemia clustering
\end{keyword}

\end{frontmatter}

\section{Introduction}\label{introduction}

Level set estimation theory deals with the problem of reconstructing an unknown set $G(t)=\{f \geq t\}$ from a random sample of points $\mathcal{X}_n=\{X_1,...,X_n\}$ of a random variable $X$, where $f$ is the density of $X$, and $t$ denotes a positive threshold. Since Hartigan (1975) introduced the notion of
population clusters as the connected components of density level sets, many interesting works have been published (see, for instance, Mammen and Polonik, 2014 or Steinwart, 2014) and many applications have appeared. The concept of clustering is related to the notion of the mode
and, in fact, some clustering algorithms are based on the estimation of modes (see Cuevas,
Febrero and Fraiman, 2000). An interesting application of this clustering approach to
astronomical sky surveys was proposed by Jang (2006) and Klemel{\"a} (2004, 2006) using
a similar point of view to develop methods for visualizing multivariate density estimates.
Goldenshluger and Zeevi (2004) used level set estimation in the context of the Hough
transform, which is a well-known computer vision algorithm. Some problems in flow
cytometry involve the statistical problem of reconstructing a level set for the difference between two probability densities (see Roederer and Hardy, 2001). In addition, interesting
applications include the detection of mine fields based on aerial observations, the analysis
of seismic data, as well as certain issues in image segmentation (see Huo and Lu, 2004).
The detection of outliers is another important application of level set estimation (see
Gardner et al., 2006 or Markou and Singh, 2003 for a review). An outlier can be thought
of as an observation that does not
belong to the effective support determined by the level set. This approach follows
that of Devroye and Wise (1980) to determine whether a manufacturing process is out of
control. For quality control schemes, see also Ba\'illo, Cuevas and Justel (2000) or Ba\'illo
and Cuevas (2006).

There are three methodologies in literature for estimating level sets: Plug-in, excess mass and hybrid. The choice of an algorithm depends on the geometric assumptions made on the shape of the level set:\vspace{.25cm}\\
The \emph{plug-in estimation} is the most natural choice to estimate $G(t)$ when no geometric information about the level set is available. The estimator proposed is $\hat{G}(t)=\{f_n\geq t\}$ where $f_n$ is a nonparametric estimator for the density function. Usually, $f_n$ denotes the kernel estimator. Given $\mathcal{X}_n$, the kernel density estimator at point $x$ is defined as\vspace{.1cm}
\begin{equation}
\label{estimacionnucleo2}f_n(x)=\frac{1}{n}\sum_{i=1}^n K_H\left(x-X_i\right),\vspace{1.8mm}
\end{equation}where $K_H(z)=|H|^{-1/2}K(H^{-1/2}z)$, $ |\mbox{ }|$ represents the determinant, $K:\mathbb{R}^d\rightarrow \mathbb{R}$ denotes a kernel function (in what follows the Gaussian density) and $H$, a $(d\times d)-$dimensional symmetric positive definite matrix. The estimator defined in (\ref{estimacionnucleo2}) is heavily dependent on the matrix $H$, see Wand and Jones (1995). Therefore, the practical problem of the plug-in methodology is the choice of this matrix. Unlike density estimation, the level set estimation has been considered in literature from many points of view but, in general, without deepening in methods for selecting $H$. In fact, this problem was first considered by Ba\'illo and Cuevas (2006) in the context of nonparametric statistical quality control. Singh et al. (2009) presented a plug-in procedure that is based on an empirical density estimator, the regular histogram. Later, Samworth and Wand (2010) derive an automatic bandwidth selection rule
to estimate density level sets but only in the one-dimensional case.\vspace{.25cm}\\
The \emph{excess mass estimation} assumes that the researcher has information a priori about the shape of the level set $G(t)$. This methodology was first proposed by Hartigan (1987) and M\"{u}ller and Sawitzki
(1987). Then, Polonik (1995) extended and investigated it in a very general framework. These algorithms are based on a quite simple idea: The set $G(t)$ maximizes the functional
$$H_t(B)=\mathbb{P}(B)-t\mu(B),\vspace{.5mm}\label{Ht}$$on the Borel sets $B$ where $\mathbb{P}$ denotes the probability measure induced by $f$ and $\mu\label{lebesgue2}$, the Lebesgue measure. In addition, $H_t$ can be estimated empirically. So, if $G(t)$ is assumed to belong to a family of sets then it could be reconstructed by maximizing the empirical version of the previous functional on the family considered. Consequently, unlike the plug-in approximation, excess mass methods do not need to smooth the sample $\mathcal{X}_n$ and, in addition, they impose geometric restrictions on the estimators.\vspace{.25cm}\\
The last and third methodology is a \emph{hybrid} of the two previous ones. Just as the excess mass methods, the hybrid methodology assumes some shape restrictions on the class of sets considered and, like the plug-in methods, it needs to smooth the data set. Walther (1997) proposed the granulometric smoothing method to reconstruct a level set assuming that it and its complement are both $r-$convex. Saavedra-Nieves et al. (2014) presented two new hybrid methods for estimating convex and $r-$convex sets. A closed set $A\subset\mathbb{R}^d$ is said to be $r-$convex, for some $r>0$, if $A=C_{r}(A)$, where
$$C_{r}(A)=\bigcap_{\{B_r(x):B_r(x)\cap
A=\emptyset\}}\left(B_r(x)\right)^c$$
denotes the $r-$convex hull of $A$ and $B_r(x)$, the open ball with
center $x$ and radius $r$. The $r-$convex hull is closely related to the closing of $A$ by $B_r(0)$ from the mathematical morphology, see Serra (1982). It can be shown that
$$
C_{r}(A)=(A\oplus r B_1(0))\ominus r B_1(0),
$$
where $\lambda C=\{\lambda c: c\in C\}$, $C\oplus D=\{c+d:\ c\in C, d\in D\}$ and $C\ominus D=\{x\in\mathbb{R}^d:\ \{x\}\oplus D\subset C\}$, for $\lambda \in \mathbb{R}$ and
sets $C$ and $D$.

According to the simulation results presented in Saavedra-Nieves et al. (2014), the performance of the $r-$convex hull algorithm is quite promising. This paper is focused on proposing a data-driven method for reconstructing density level set under the flexible assumption of $r-$convexity basing on this initial proposal. The main disadvantage of the $r-$convex hull method proposed in Saavedra-Nieves et al. (2014) is the that the parameter $r$ is unknown. An automatic selection criterion will be proposed
in this work. Once the parameter $r$ is estimated, it is natural to propose a resulting density level set estimator based on the estimator of $r$. Two metrics between sets are usually considered in order to assess the performance of a set estimator. Let $A$ and $C$ be two closed, bounded, nonempty subsets of $\mathbb{R}^{d}$. The
Hausdorff distance between $A$ and $C$ is defined by\vspace{-0.13cm}
$$
d_{H}(A,C)=\max\left\{\sup_{a\in A}d(a,C),\sup_{c\in C}d(c,A)\right\},\vspace{-0.13cm}
$$where $d(a,C)=\inf\{\|a-c\|:c\in C\}$ and $\|\mbox{ }\|$ denotes the Euclidean norm. On the other hand, if $A$ and $C$
are two bounded and Borel sets then the distance in measure between $A$ and $C$ is defined by $d_{\mu}(A,C)=\mu(A\triangle C)$, where $\mu$ denotes the Lebesgue measure and $\triangle$, the symmetric difference, that is, $A\triangle C=(A \setminus C)\cup(C \setminus A). $
Hausdorff distance quantifies the physical proximity between two sets whereas the distance in measure
is useful to quantify their similarity in content. However, neither of these distances are completely useful for measuring
the similarity between the shape of two sets. The Hausdorff distance between boundaries, $d_H(\partial A,\partial C)$, can be also used to evaluate the
performance of the estimators, see Ba\'illo and Cuevas (2001), Cuevas and Rodr\'iguez-Casal (2004) or Rodr\'iguez-Casal (2007).

This paper is organized as follows. The optimal parameter is defined and an estimator for it is established in Section \ref{fg}. The consistency of this new estimator is proved in Section \ref{mainresults}. In addition, the resulting density level set estimator is presented and its consistency and convergence rates will be showed too. A real data example is presented in Section \ref{gorr}. The performance of the new algorithm will be illustrated by comparing the distribution of controls and cases in a leukaemia data set. Proofs are deferred to Section \ref{prprpr}. Finally, a serie of useful theoretical results contained in Walther (1997) are showed in Section \ref{apendix}.

\section{Selection of the optimal parameter}\label{fg}

According to the previous comments, the first step is to determinate the optimal value of the smoothing parameter to be estimated. If a set $A$ is $r-$convex then $A$ is $r^*-$convex for all $r^*$ verifying $0<r^*\leq r$. Therefore, we are interested in estimating the greatest value of $r>0$ such that $G(t)$ is $r-$convex.

\begin{definition}\label{r_0_t}Let $G(t)$ be a compact, nonempty, nonconvex and $r-$convex level set for some $r>0\label{r0t}$. It is defined
\begin{equation}\label{estimadorr0levelsetu}
    r_0(t)=\sup\{\gamma>0:C_\gamma(G(t))=G(t)\}.
\end{equation}
\end{definition}
For simplicity in the exposition, we have assumed that $G(t)$ is not convex in order to guarantee that the set $\{\gamma>0:C_\gamma(G(t))=G(t)\}$ is upper bounded. Of course, if $G(t)$ is convex $r_0(t) $ would be infinity. Notice that, in this case, the parameter depends on the level $t>0$ considered, see Figure \ref{oosssodfssfs}.

The following geometric property has been assumed on the level set $G(t)$:\vspace{4mm}\\
($R_{\lambda}^r$) A closed ball of radius $\lambda>0$ rolls freely in $G(t)$ and a closed ball of \textcolor[rgb]{1.00,1.00,1.00}{($R_{\lambda}^r$)} radius $r>0$ rolls freely in $\overline{G(t)^c}$.\vspace{.3mm}\\

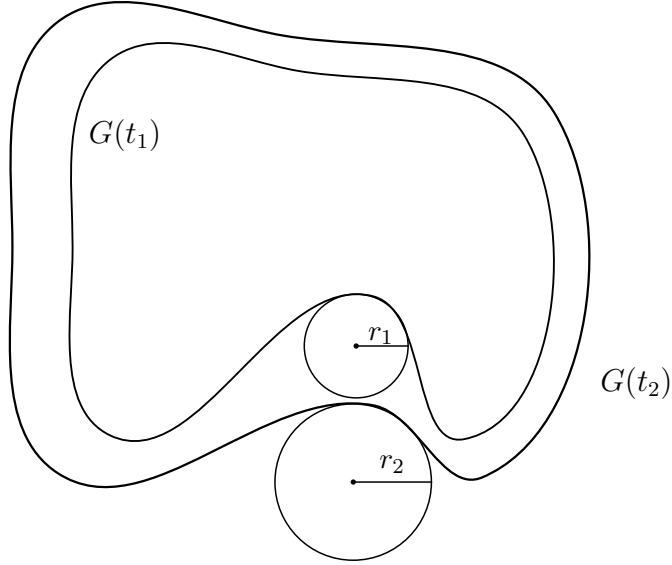
\begin{figure*}\vspace{-1.cm}
\hspace{10cm}\begin{pspicture}(7.3,-2.3)(10,6.4)

\rput(-1.,-0.5){\scalebox{1.2}[1.2]{\psccurve[showpoints=false,fillstyle=solid,fillcolor=white,linecolor=black,linewidth=.25mm,linearc=3](1.5,0)(1,2.5)(1.5,5)(4,4.85)(7,4)(6.2,-0.05)(5,.75)
 }}
 \psccurve[showpoints=false,fillstyle=solid,fillcolor=white,linecolor=black,linewidth=.25mm,linearc=3](1.5,0)(1,2.5)(1.5,5)(4,4.85)(7,4)(6.2,-0.05)(5,1.85)
  \rput(1.7,4){$G(t_1)$}
\rput(8.5,.7){$G(t_2)$}
  \pscircle[linearc=0.25,linecolor=black,linewidth=0.2mm,linestyle=solid,dash=3pt 2pt](4.77,1.19){.7}
  \psdots[dotsize=2pt](4.77,1.19)
  \psline[linearc=0.25,linecolor=black,linewidth=0.2mm,linestyle=solid,dash=3pt 2pt](4.77,1.19)(5.47,1.19)
\rput(5.1,1.33){\small{$r_1$}}
\pscircle[linearc=0.25,linecolor=black,linewidth=0.2mm,linestyle=solid,dash=3pt 2pt](4.73,-.62){1.05}
\psdots[dotsize=2pt](4.73,-.62)
  \psline[linearc=0.25,linecolor=black,linewidth=0.2mm,linestyle=solid,dash=3pt 2pt](4.73,-.62)(5.78,-.62)
  \rput(5.25,-.38){\small{$r_2$}}
  $ $
\end{pspicture}\caption{Level sets $G(t_i),$ with $i=1,2$ and $t_1$ greater than $t_2$. In addition, $C_{r_i}(G(t_i))=G(t_i)$ with $r_i$ denoting $r_0(t_i)$ for $i=1,2$. In this case, $r_2>r_1$.}\label{oosssodfssfs}\vspace{-0.25cm}
\end{figure*}

It is said that $A$ satisfies the $r-$rolling condition if each boundary point $a\in\partial A$ there exists a closed ball with radius $r$, $B_r[x]$, such that $a\in B_r[x]$ and $B_r[x]\subset A$. The intuitive concept of rolling freely can be seen as a sort of geometric smoothness statement. There exist interesting relationships between this property and $r-$convexity. In particular, Cuevas et al. (2012) proved that if $A$ is compact and $r-$convex then $\overline{A}^c$ fulfills the $r-$rolling condition. However, the reciprocal is always not true. For a in depth analysis of these two shape restrictions see Walther (1997, 1999). Satisfying the shape condition ($R_{\lambda}^r$) is a quite natural general property for level sets of densities. In fact, in Theorem 2 by Walther (1997) was proved that, under some assumptions on the density $f$, its level sets satisfy the conditions in Theorem 1 by Walther (1997) for $r=\lambda=m/k$, see below. Then, according to Theorem 2 in Walther (1997), the following assumptions are considered on $f$:\vspace{.25cm}

\begin{description}
  \item[A. ]\begin{enumerate}\item The threshold $t$ of $G(t)$ belongs to $(l,u)$ with $-\infty<l\leq u<\sup (f)$.
                                \item $f\in\mathcal{C}^p(U)$, $p\geq 1$ where $U$ is a bounded open set containing $\overline{G(l-\zeta)}\setminus \interior (G(u+\zeta))$ for some $\zeta>0$  where $G(u+\zeta)$ is bounded, see Figure \ref{oookffghhffffffffffffffffffffffhg1o2lpoi}.
                    \item The gradient of $f$, $\nabla f$, satisfies $|\nabla f|\geq  m  >0$ as well as Lipschitz condition on $U$:
      $$|\nabla f(x)-\nabla f(y)|\leq k|x-y|\mbox{ for }x,y\in U.$$
                  \end{enumerate}
\end{description}

\begin{figure}[h!]\centering\vspace{-.2cm}
\begin{pspicture}(-2,-1.95)(10,7)

\rput(-1.58,-1.){\scalebox{1.3}[1.4]{\psccurve[showpoints=false,fillstyle=solid,fillcolor=white,linecolor=black,linewidth=0.15mm,linearc=3,linestyle=dashed,dash=2pt 2pt](1.5,0)(1.9,2.5)(1.5,5)(4,4.7)(7,4)(6.2,-0.01)(5,0.6)}}


\psccurve[showpoints=false,fillstyle=solid,fillcolor=white,linecolor=gray,linewidth=0.25mm,linearc=3](1.5,0)(1.9,2.5)(1.5,5)(4,4.35)(7,4)(6.2,.3)(5,1.5)


\psccurve[showpoints=false,fillstyle=solid,fillcolor=white,linecolor=gray,linewidth=0.25mm,linearc=3](5.8,1.3)(6.6,3.5)(6.15,0.9)

\rput(1.93,0.61){\scalebox{0.7}[0.75]{\psccurve[showpoints=false,fillstyle=solid,fillcolor=white,linecolor=black,linewidth=0.25mm,linearc=3,linestyle=dashed,dash=3pt 2pt](5.8,1.3)(6.6,3.5)(6.15,0.9) }}
\psccurve[showpoints=false,fillstyle=solid,fillcolor=white,linecolor=gray,linewidth=0.25mm,linearc=3](2,0.8)(2.9,2)(2.55,0.8)

\rput(0.9,0.5){\scalebox{0.65}[0.65]{\psccurve[showpoints=false,fillstyle=solid,fillcolor=white,linecolor=black,linewidth=0.25mm,linearc=3,linestyle=dashed,dash=3pt 2pt](2,0.8)(2.9,2)(2.55,0.8)  }}

\rput(5.1,3){$G(u+\zeta)$}
\rput(1.6,5.5){$G(l-\zeta)$}
\rput(7.8,5.00){$U$}
\end{pspicture}\vspace{0 cm}\caption{$G(u+\zeta)$ and $G(l-\zeta)$ in gray. The open set $U$ in dashed line.}\label{oookffghhffffffffffffffffffffffhg1o2lpoi}
$ $\\
\end{figure}
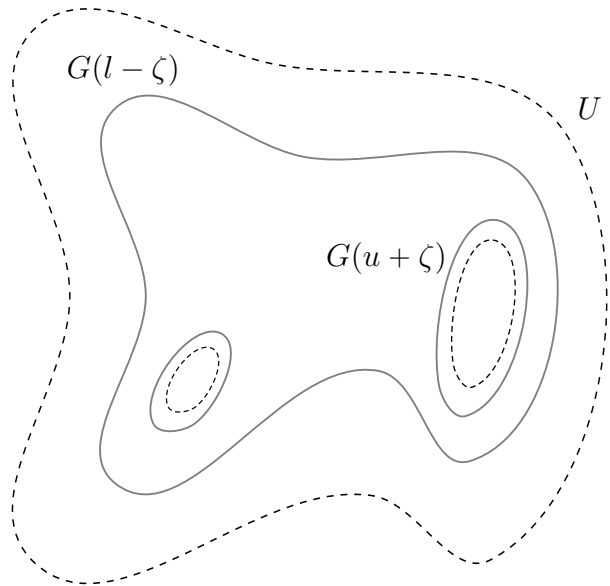

Under (A), it is verified that $r_0(t)\geq m/k$. In addition, the consideration of the shape condition ($R_{\lambda}^r$) has allowed us to guarantee the $r-$convexity of the level set $G(t)$, see Proposition \ref{rconvexo2}. The proof of this result is not showed here since it is a direct consequence of Proposition 2.3 in Rodr\'iguez-Casal and Saavedra-Nieves (2014). 

\begin{proposition}\label{rconvexo2}Let $G(t)\subset \mathbb{R}^{d}$ be a compact and nonempty level set verifying ($R_{\lambda}^r$). Then, $G(t)$ is $r-$convex.\end{proposition}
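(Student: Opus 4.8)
The plan is to prove the two inclusions $G(t)\subseteq C_r(G(t))$ and $C_r(G(t))\subseteq G(t)$. The first holds for any set: if $a\in G(t)$ and $B_r(y)$ is an open ball with $B_r(y)\cap G(t)=\emptyset$, then $a\notin B_r(y)$, so $a$ belongs to every factor of the intersection defining $C_r(G(t))$. Thus the whole problem reduces to showing that each $x\notin G(t)$ is omitted by some open $r$-ball disjoint from $G(t)$; equivalently, that there is a centre $z$ with $\|x-z\|<r$ and $B_r(z)\cap G(t)=\emptyset$.

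Before that I would extract from the \emph{inner} half of $(R_{\lambda}^r)$ the regularity $G(t)=\overline{\interior(G(t))}$: every $p\in\partial G(t)$ lies in some closed ball $B_{\lambda}[w]\subseteq G(t)$, whose open core $B_{\lambda}(w)$ is an open subset of $G(t)$ and hence contained in $\interior(G(t))$, so $p\in\overline{\interior(G(t))}$. Consequently $\interior(\overline{G(t)^c})=G(t)^c$ — the identity that upgrades ``open $r$-ball inside $\overline{G(t)^c}$'' to ``open $r$-ball disjoint from $G(t)$'' — and $\partial G(t)\subseteq\partial\overline{G(t)^c}$, so the \emph{outer} half of $(R_{\lambda}^r)$ is applicable at every point of $\partial G(t)$.

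Now fix $x\notin G(t)$ and let $\rho=d(x,G(t))>0$; since $G(t)$ is compact and nonempty this is attained at some $p\in\partial G(t)$, and $B_{\rho}(x)\cap G(t)=\emptyset$. If $\rho\ge r$ the ball $B_r(x)$ already works, so assume $0<\rho<r$. Apply $(R_{\lambda}^r)$ at $p$: there are closed balls $B_{\lambda}[w]\subseteq G(t)$ and $B_r[z]\subseteq\overline{G(t)^c}$, each containing $p$. By the previous paragraph $B_r(z)\subseteq G(t)^c$ and $B_{\rho}(x)\subseteq G(t)^c$, while $B_{\lambda}(w)\subseteq\interior(G(t))$; moreover $p$ must lie on the sphere of each of $B_{\lambda}[w]$, $B_r[z]$, $B_{\rho}[x]$, since an interior point of any of them would be interior to $G(t)$ or to $G(t)^c$. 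Hence $B_{\lambda}(w)$ is disjoint from both $B_r(z)$ and $B_{\rho}(x)$. The elementary fact I would use is that two closed balls $B_a[u]$, $B_b[v]$ with $B_a(u)\cap B_b(v)=\emptyset$ and a common point $p$ on both their spheres are externally tangent at $p$: otherwise $\|u-v\|<a+b$ and the point of the segment $[u,v]$ at distance just below $a$ from $u$ lies in both open cores. External tangency forces equality in $\|u-v\|\le\|u-p\|+\|p-v\|=a+b$, hence $p\in[u,v]$ and $(p-u)/a=(v-p)/b$. Applying this to the pairs $(B_{\lambda}[w],B_{\rho}[x])$ and $(B_{\lambda}[w],B_r[z])$ gives $(p-w)/\lambda=(x-p)/\rho=(z-p)/r$, so $z-p=(r/\rho)(x-p)$ and therefore $\|x-z\|=(r/\rho-1)\rho=r-\rho<r$. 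Since $B_r(z)\subseteq G(t)^c$, this ball contains $x$ and misses $G(t)$, so $x\notin C_r(G(t))$, which finishes the argument.

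I expect the only real care to be in the topological bookkeeping — checking that the two rolling conditions really give $\interior(\overline{G(t)^c})=G(t)^c$, that $p$ sits on the correct boundaries, and that the tangency lemma is applied to the \emph{closed} balls — rather than in any hard estimate; the colinearity step itself is one line. (As the text notes, one may instead simply invoke Proposition~2.3 of Rodr\'iguez-Casal and Saavedra-Nieves (2014); the argument above has the advantage of making explicit where each half of $(R_{\lambda}^r)$ is needed.)
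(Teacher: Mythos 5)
Your proof is correct. Note first that the paper itself gives no argument for this proposition: it simply declares it a direct consequence of Proposition~2.3 in Rodr\'iguez-Casal and Saavedra-Nieves (2014), so you are supplying in full what the paper outsources. Your argument is sound at every step: the inclusion $G(t)\subseteq C_r(G(t))$ is indeed trivial; the inner rolling condition gives $G(t)=\overline{\interior(G(t))}$, hence $\interior(\overline{G(t)^c})=G(t)^c$ and $\partial G(t)=\partial\overline{G(t)^c}$ (this is exactly the content of the paper's Lemma~\ref{noais}, which it proves later by a slightly different segment argument from the rolling ball alone, whereas you route it through the regularity identity -- both are fine); this identification is what licenses applying the outer rolling condition at the metric projection $p$ of an exterior point $x$, and what upgrades $B_r[z]\subseteq\overline{G(t)^c}$ to $B_r(z)\cap G(t)=\emptyset$. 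The tangency lemma is stated and justified correctly (disjoint open cores plus a common sphere point force $\|u-v\|=a+b$ and collinearity), and the resulting computation $\|x-z\|=r-\rho<r$ correctly exhibits an open $r$-ball containing $x$ and missing $G(t)$, so $x\notin C_r(G(t))$. The one hypothesis you use implicitly and should perhaps flag is $\lambda>0$, which the condition ($R_{\lambda}^r$) guarantees; the inner ball $B_\lambda[w]$ is the pivot that forces the three centres $w$, $x$, $z$ onto a common line through $p$, and the argument would not close without it. Compared with invoking the external proposition, your proof has the advantage of showing precisely where each half of ($R_{\lambda}^r$) enters, at the cost of reproving the topological regularity that the paper's Lemma~\ref{noais} already records.
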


\subsection{Defining the estimator for the optimal parameter}\label{ty}
The method of the $r-$convex hull proposed in Saavedra-Nieves et al. (2014) divides the original sample $\mathcal{X}_n$ into two subsamples, $\{X_i\in\mathcal{X}_n:f_n(X_i)\geq t\}$ and $\{X_i\in\mathcal{X}_n:f_n(X_i)< t\}$. The estimator for the density level set is constructed as the $r-$convex hull of the sample points where the density estimator is greater than or equal to the threshold. Therefore, it takes into account the information contained only in one of the two subsamples. Then, the information about the complement of the level set $G(t)$ is not taken advantage. Our proposal here will solve this problem by modifying slightly the original algorithm. First, an estimator for the parameter defined in (\ref{estimadorr0levelsetu}) will be proposed. Its definition depends on a sequence $D_n$ satisfying the assumption:
\begin{description}
   \item[D. ]$D_n$ is equal to $M(\log{n}/n)^{p/(d+2p)}$ for a big enough value of the constant $M>0$.\label{Dn}
\end{description}

\begin{definition}\label{jejejeje}Let $G(t)$ be a compact, nonempty and nonconvex level set. Under assumptions (A) and (D), let $\mathcal{X}_n$ be a random sample generated from a distribution with density function $f$. An estimator for the parameter defined in (\ref{estimadorr0levelsetu}) can be defined as
\begin{equation}\label{estimadorr0hat}
    \hat{r}_0(t)=\sup\{\gamma>0:C_\gamma(\mathcal{X}_n^+(t))\cap \mathcal{X}_n^-(t)=\emptyset\},
\end{equation}where
$$
   \mathcal{X}_n^+(t)=\{X\in\mathcal{X}_n: f_n(X)\geq t+D_n\}\mbox{ and }\mathcal{X}_n^-(t)=\{X\in\mathcal{X}_n: f_n(X)< t-D_n\}.
\label{mas2}$$
\end{definition}

Here, the original sample $\mathcal{X}_n$ is divided into three subsamples, $\mathcal{X}_n^+(t)$, $\mathcal{X}_n^-(t)$ and $\mathcal{X}_n\setminus\left(\mathcal{X}_n^+(t)\cup\mathcal{X}_n^-(t)\right)$. From an intuitive point of view, $ \mathcal{X}_n^+(t)$ and $\mathcal{X}_n^-(t)$ should be contained in $G(t)$ and its complementary, respectively. This property is proved in Proposition \ref{mostramaisenGlambda}, even for convex sets. According to Definition \ref{jejejeje}, we have assumed that $G(t)$ is not convex only for simplicity in the exposition. If $G(t)$ is convex then $\hat{r}_0(t)=\infty$ and, therefore, the convex hull of sample points, $\mbox{conv}(\mathcal{X}_n^+(t))$, would reconstruct the level set $G(t)$. In addition, Proposition \ref{alberto5} ensures that $\mathcal{X}_n^+(t)\neq\emptyset$. If $G(t)$ is nonconvex then it can be seen that, with probability one and for $n$ large enough, the set $\{\gamma>0: C_\gamma(\mathcal{X}_n^+(t))\cap \mathcal{X}_n^-(t)=\emptyset\}$ is nonempty and upper bounded. So, the estimator proposed in (\ref{estimadorr0hat}) is well-defined. In order to guarantee that the estimator satisfies these interesting and natural properties, two conditions on the kernel estimator $f_n$ of $f$ must be considered, see again Walther (1997) for more details:
\begin{description}
       \item[K. ]\begin{enumerate}
                    \item The kernel function $K$ is a continuous kernel of order at least $p$ with bounded support and finite variation.
                    \item The bandwidth parameter is of the order $(\log{n}/n)^{1/(d+2p)}$.
                  \end{enumerate}
\end{description}

\newpage

\begin{figure}[h!]\vspace{-2cm}
\hspace{1cm}\begin{picture}(-30,220)
\centering
\put(170,6.5){\includegraphics[height=7.9cm, width=0.43\textwidth]{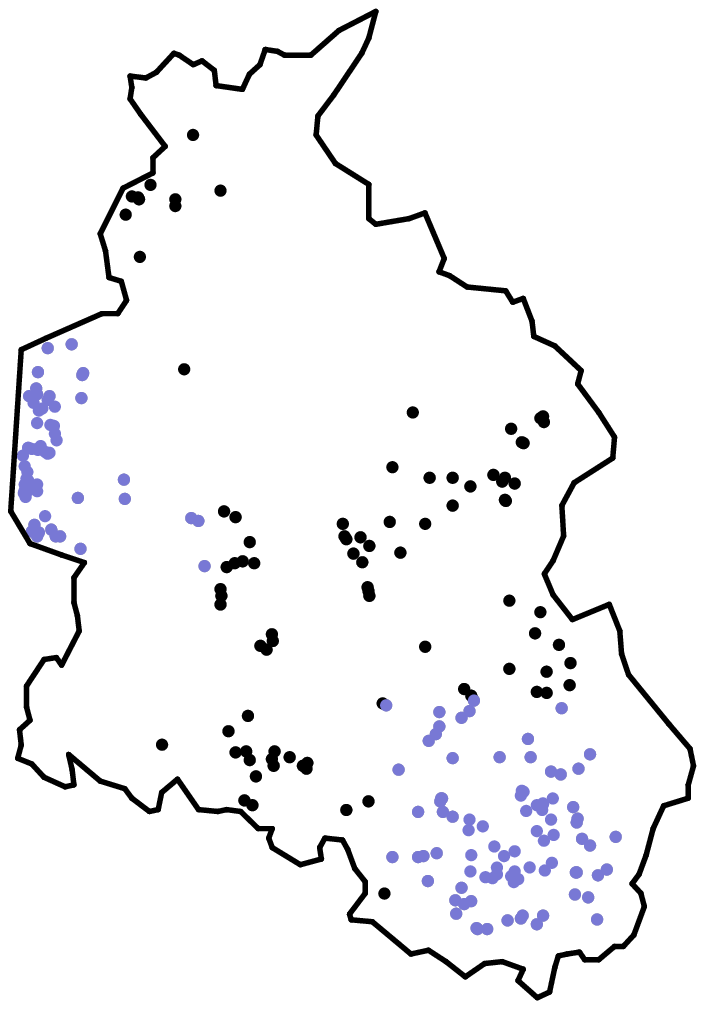}}
\put(1,6.5){\includegraphics[height=7.9cm, width=0.43\textwidth]{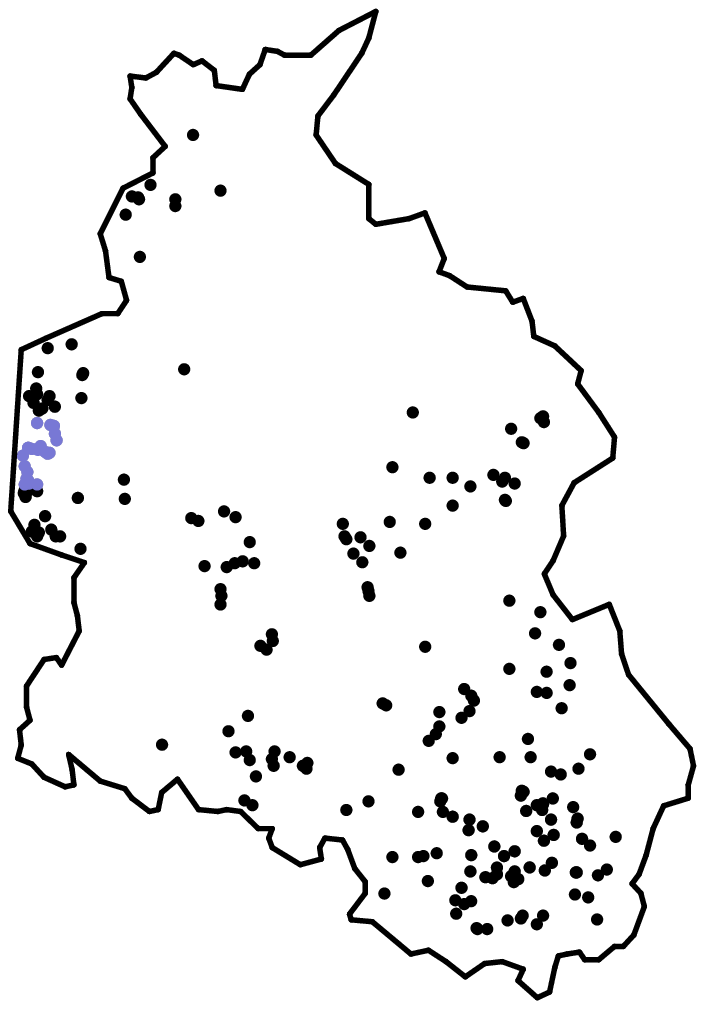}}
  \end{picture}\vspace{-1.7cm}\caption{The set $\mathcal{X}_{n}^+(t_i)$ is represented in blue for $t=t_i,\mbox{ }i=1,2$ with $t_1$ (left) greater than $t_2$ (right). }\label{rty}
\end{figure}

$ $

$\vspace{0.1cm} $

\begin{figure}[h!]
\hspace{1cm}\begin{picture}(-30,330)
\centering
\pscircle[fillcolor=white,linecolor=black,linewidth=0.2mm ](-0.2,4.745){.565}
\pscircle[fillcolor=white,linecolor=black,linewidth=0.2mm ](2.45,9.5){1.7}
\psline[fillcolor=white,linecolor=black,linewidth=0.2mm ](0.05,5.23)(1.55,8.08)
\psline[fillcolor=white,linecolor=black,linewidth=0.2mm ](1.5,10.88)(1.41,8.87)
\psline[fillcolor=white,linecolor=black,linewidth=0.2mm ](1.41,8.87)(1.7,8.3)
\psline[fillcolor=white,linecolor=black,linewidth=0.2mm ](1.7,8.3)(2.4,7.8)
\put(12,198){\includegraphics[height=5 cm, width=0.285\textwidth]{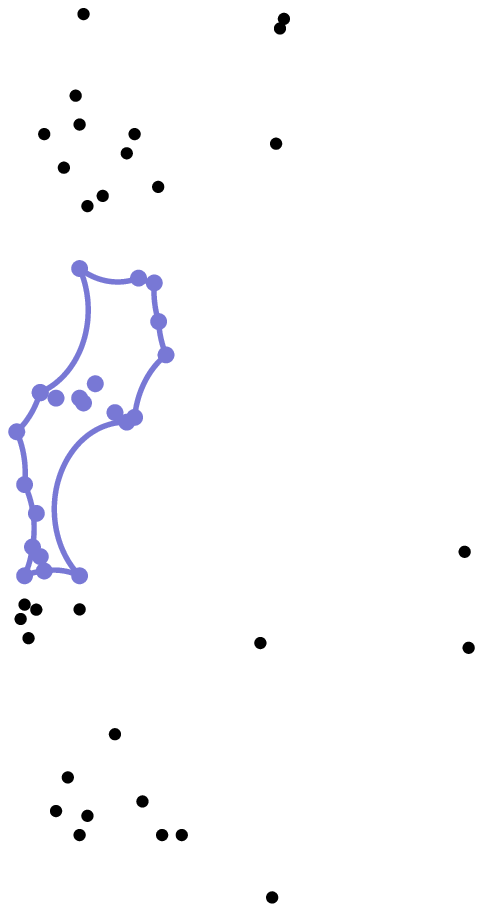}}
\put(-60,6.5){\includegraphics[height=7.9cm, width=0.43\textwidth]{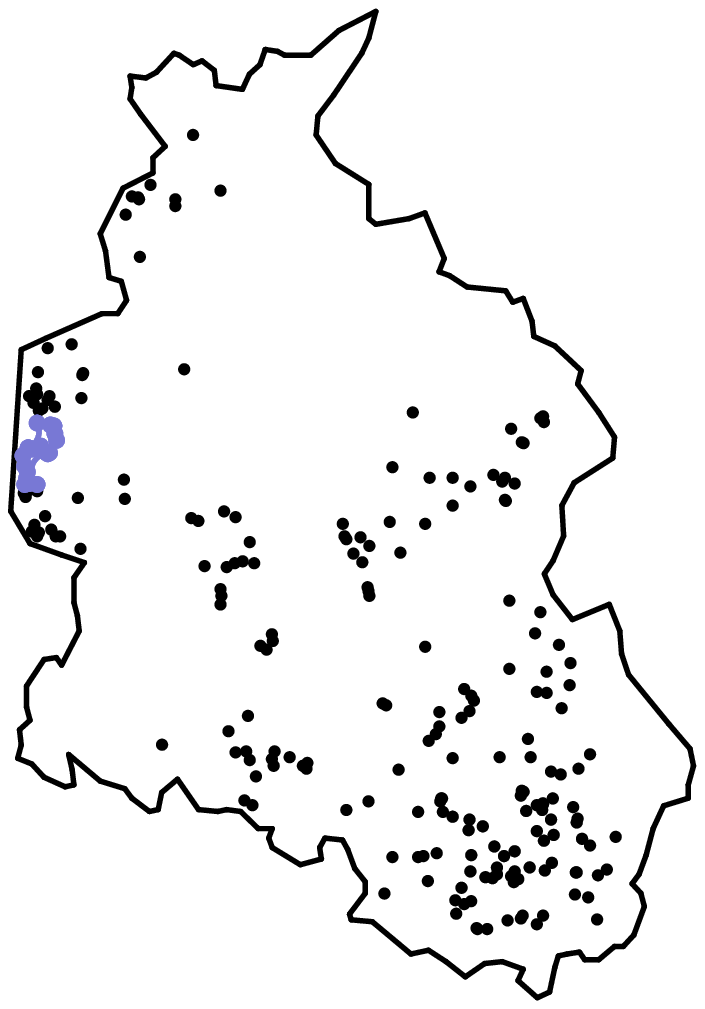}}
\pscircle[fillcolor=white,linecolor=black,linewidth=0.2mm ](4.3,4.745){.565}
\pscircle[fillcolor=white,linecolor=black,linewidth=0.2mm ](6.95,9.5){1.7}
\psline[fillcolor=white,linecolor=black,linewidth=0.2mm ](4.55,5.23)(6.15,8.0)
\psline[fillcolor=white,linecolor=black,linewidth=0.2mm ](6,10.88)(5.91,8.87)
\psline[fillcolor=white,linecolor=black,linewidth=0.2mm ](5.91,8.87)(6.2,8.3)
\psline[fillcolor=white,linecolor=black,linewidth=0.2mm ](6.2,8.3)(6.9,7.8)
\put(142,198){\includegraphics[height=5 cm, width=0.285\textwidth]{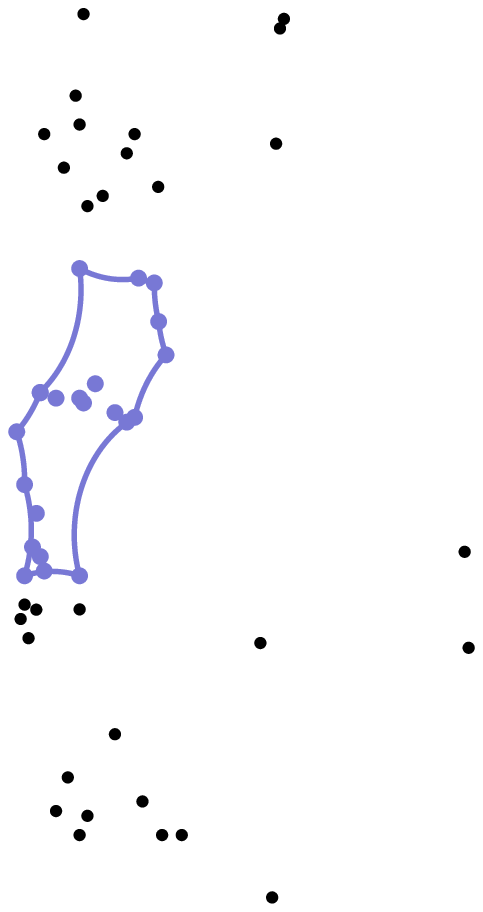}}
\put(70,6.5){\includegraphics[height=7.9cm, width=0.43\textwidth]{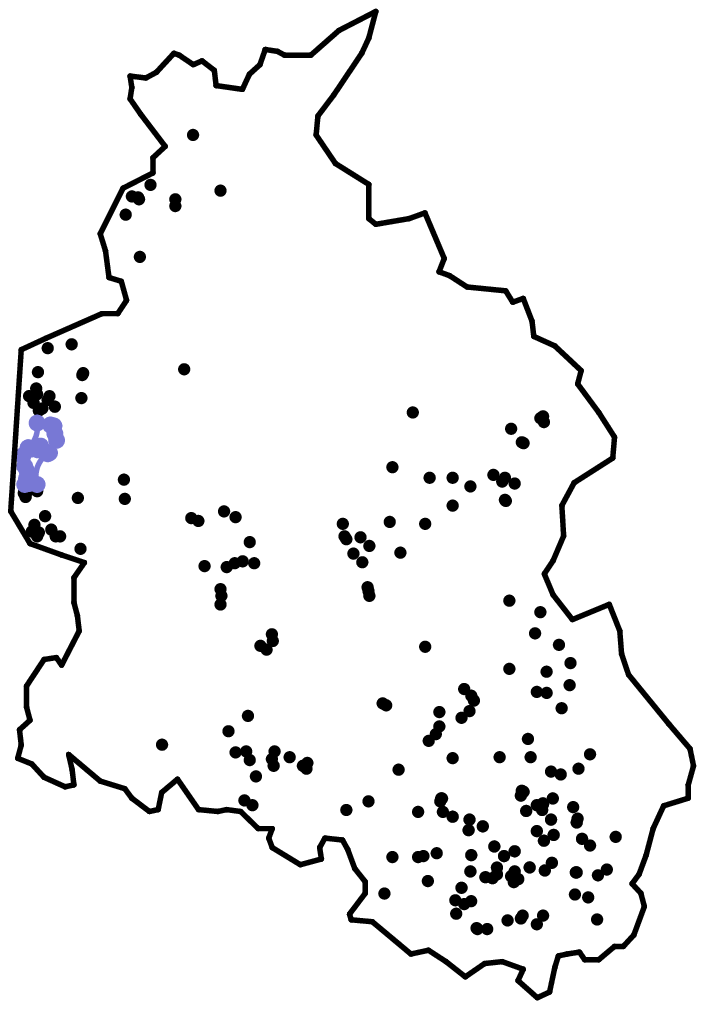}}
\pscircle[fillcolor=white,linecolor=black,linewidth=0.2mm ](8.9,4.745){.565}
\pscircle[fillcolor=white,linecolor=black,linewidth=0.2mm ](11.55,9.5){1.7}
\psline[fillcolor=white,linecolor=black,linewidth=0.2mm ](9.15,5.23)(10.75,8.0)
\psline[fillcolor=white,linecolor=black,linewidth=0.2mm ](10.6,10.88)(10.51,8.87)
\psline[fillcolor=white,linecolor=black,linewidth=0.2mm ](10.51,8.87)(10.8,8.3)
\psline[fillcolor=white,linecolor=black,linewidth=0.2mm ](10.8,8.3)(11.5,7.8)
\put(272,198){\includegraphics[height=5 cm, width=0.285\textwidth]{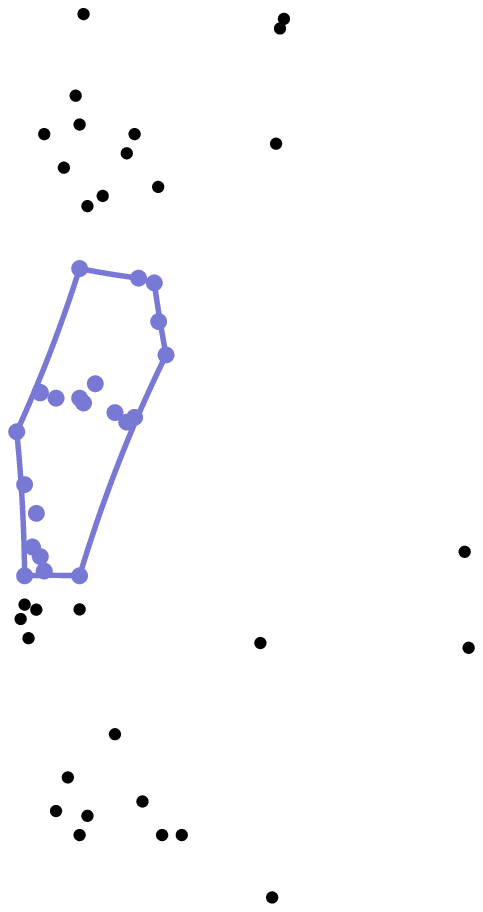}}
\put(200,6.5){\includegraphics[height=7.9cm, width=0.43\textwidth]{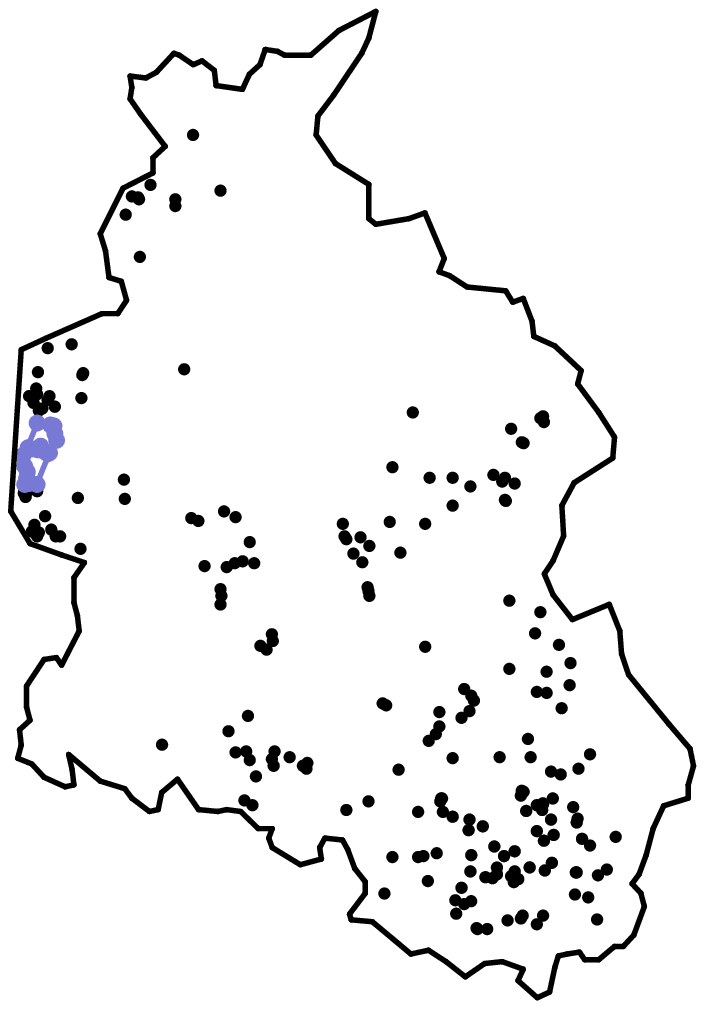}}
  \end{picture}\vspace{-1.3cm}\caption{The set $\mathcal{X}_{n}^+(t_1)$ is represented in blue. $C_{0.02}(\mathcal{X}_{n}^+(t_1))$ (left), $C_{0.03}(\mathcal{X}_{n}^+(t_1))$ (center) and $C_{0.3}(\mathcal{X}_{n}^+(t_1))$ (right).}\label{tttt1}
\end{figure}

\newpage

\begin{figure}[h!]
\hspace{1cm}\begin{picture}(-30,330)
\centering
\pscircle[fillcolor=white,linecolor=black,linewidth=0.2mm ](1.3,5.05){.8}
\put(4,211){\includegraphics[height=3.9cm, width=0.24\textwidth]{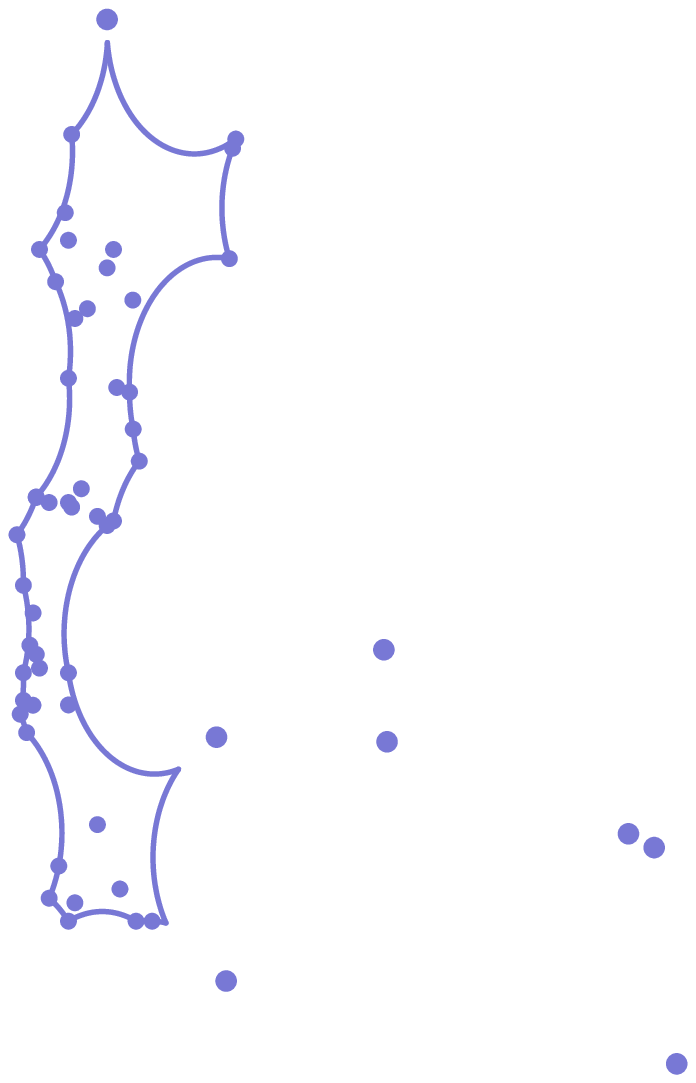}}
\put(-28,7.9){\includegraphics[height=8.3cm, width=0.43\textwidth]{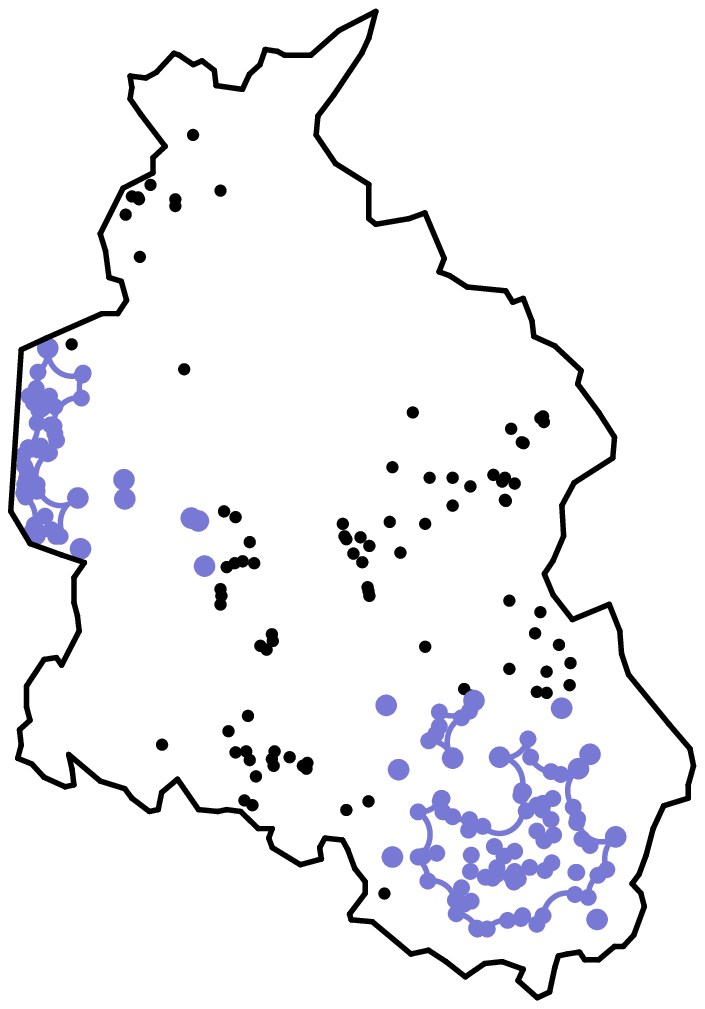}}
\pscircle[fillcolor=white,linecolor=black,linewidth=0.2mm ](3.07,3.13){.85}
\pscircle[fillcolor=white,linecolor=black,linewidth=0.2mm ](2.4,9.5){1.7}
\pscircle[fillcolor=white,linecolor=black,linewidth=0.2mm ](2,-.6){1.7}
\put(-19,-90){\includegraphics[height=4.75cm, width=0.3\textwidth]{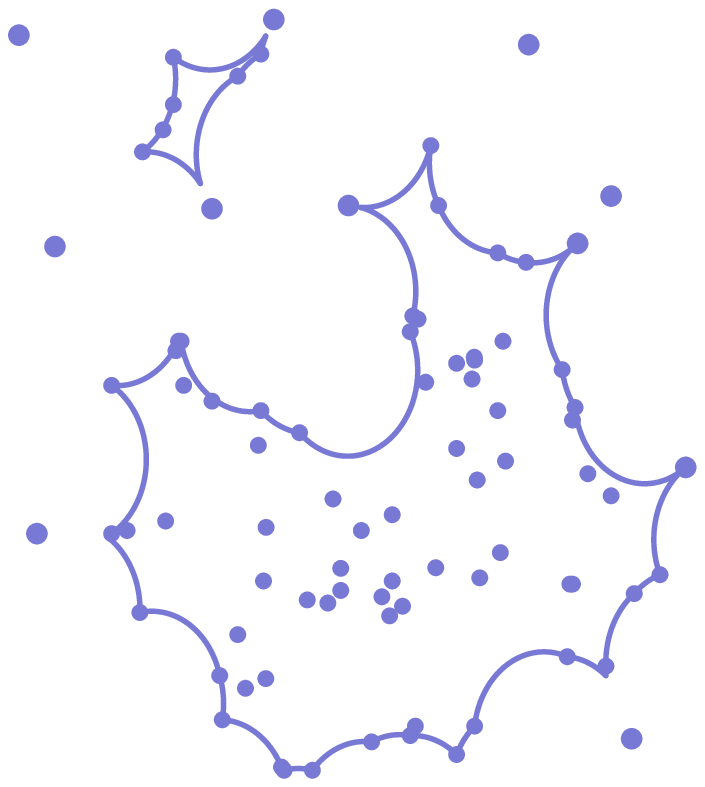}}
\psline[fillcolor=white,linecolor=black,linewidth=0.2mm ](1.65,8.)(1.85,8.1)
\psline[fillcolor=white,linecolor=black,linewidth=0.2mm ](1.85,8.1)(1.35,8.2)
\psline[fillcolor=white,linecolor=black,linewidth=0.2mm ](1.35,8.2)(1.08,8.5)
\psline[fillcolor=white,linecolor=black,linewidth=0.2mm ](1.08,8.5)(1.15,10.65)
\psline[fillcolor=white,linecolor=black,linewidth=0.2mm ](1.28,10.75)(2.,11.)
\psline[fillcolor=white,linecolor=black,linewidth=0.2mm ](2.,11.)(2.2,11)
\psline[fillcolor=white,linecolor=black,linewidth=0.2mm ](2.2,11)(2.3,11.2)
\psline[fillcolor=white,linecolor=black,linewidth=0.2mm ](6.25,8.)(6.45,8.1)
\psline[fillcolor=white,linecolor=black,linewidth=0.2mm ](6.45,8.1)(5.95,8.2)
\psline[fillcolor=white,linecolor=black,linewidth=0.2mm ](5.95,8.2)(5.68,8.5)
\psline[fillcolor=white,linecolor=black,linewidth=0.2mm ](5.68,8.5)(5.75,10.65)
\psline[fillcolor=white,linecolor=black,linewidth=0.2mm ](5.88,10.75)(6.6,11.)
\psline[fillcolor=white,linecolor=black,linewidth=0.2mm ](6.6,11.)(6.8,11)
\psline[fillcolor=white,linecolor=black,linewidth=0.2mm ](6.8,11)(6.9,11.2)
\psdots*[dotsize=1.5pt,linecolor=black](1.7,10.85)
\psdots*[dotsize=1.5pt,linecolor=black](2.8,10.35)
\psdots*[dotsize=1.5pt,linecolor=black](3.3,8.85)
\psdots*[dotsize=1.5pt,linecolor=black](3.39,8.78)
\psdots*[dotsize=1.5pt,linecolor=black](3.425,8.4)
\psdots*[dotsize=1.5pt,linecolor=black](6.3,10.85)
\psdots*[dotsize=1.5pt,linecolor=black](7.4,10.35)
\psdots*[dotsize=1.5pt,linecolor=black](7.9,8.85)
\psdots*[dotsize=1.5pt,linecolor=black](7.99,8.78)
\psdots*[dotsize=1.5pt,linecolor=black](8.025,8.4)
\psdots*[dotsize=1.5pt,linecolor=black](.6,-.82)
\psdots*[dotsize=1.5pt,linecolor=black](.45,-.835)
\psdots*[dotsize=1.5pt,linecolor=black](.73,-1.55)
\psdots*[dotsize=1.5pt,linecolor=black](1.453,0.6)
\psdots*[dotsize=1.5pt,linecolor=black](2.39,0.48)
\psdots*[dotsize=1.5pt,linecolor=black](2.25,0.48)
\psdots*[dotsize=1.5pt,linecolor=black](2.69,0.83)
\psdots*[dotsize=1.5pt,linecolor=black](2.68,0.57)
\psdots*[dotsize=1.5pt,linecolor=black](2.38,0.73)
\psdots*[dotsize=1.5pt,linecolor=black](2.,0.75)
\psdots*[dotsize=1.5pt,linecolor=black](5.2,-.82)
\psdots*[dotsize=1.5pt,linecolor=black](5.05,-.835)
\psdots*[dotsize=1.5pt,linecolor=black](5.33,-1.45)
\psdots*[dotsize=1.5pt,linecolor=black](6.2,0.6)
\psdots*[dotsize=1.5pt,linecolor=black](6.99,0.48)
\psdots*[dotsize=1.5pt,linecolor=black](6.85,0.48)
\psdots*[dotsize=1.5pt,linecolor=black](7.29,0.83)
\psdots*[dotsize=1.5pt,linecolor=black](7.28,0.57)
\psdots*[dotsize=1.5pt,linecolor=black](6.98,0.73)
\psdots*[dotsize=1.5pt,linecolor=black](6.6,0.75)
\psline[fillcolor=white,linecolor=black,linewidth=0.2mm ](1.45,5.83)(2.,7.87)
\psline[fillcolor=white,linecolor=black,linewidth=0.2mm ](6.05,5.8)(6.6,7.88)
\psline[fillcolor=white,linecolor=black,linewidth=0.2mm ](2.55,1.)(2.95,2.3)
\psline[fillcolor=white,linecolor=black,linewidth=0.2mm ](7.15,1.)(7.55,2.3)
\pscircle[fillcolor=white,linecolor=black,linewidth=0.2mm ](7.6,3.148){.9}
\pscircle[fillcolor=white,linecolor=black,linewidth=0.2mm ](7.0,9.5){1.7}
\pscircle[fillcolor=white,linecolor=black,linewidth=0.2mm ](6.6,-.6){1.7}
\pscircle[fillcolor=white,linecolor=black,linewidth=0.2mm ](5.87,5.08){.8}
\put(136,214){\includegraphics[height=4cm, width=0.25\textwidth]{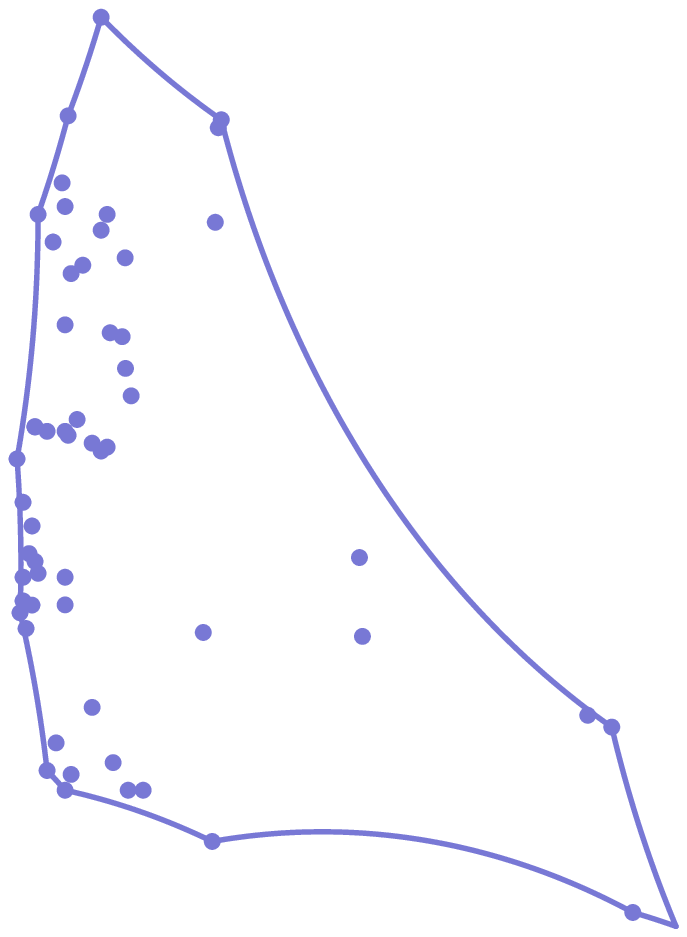}}
\put(103,7.9){\includegraphics[height=8.3cm, width=0.43\textwidth]{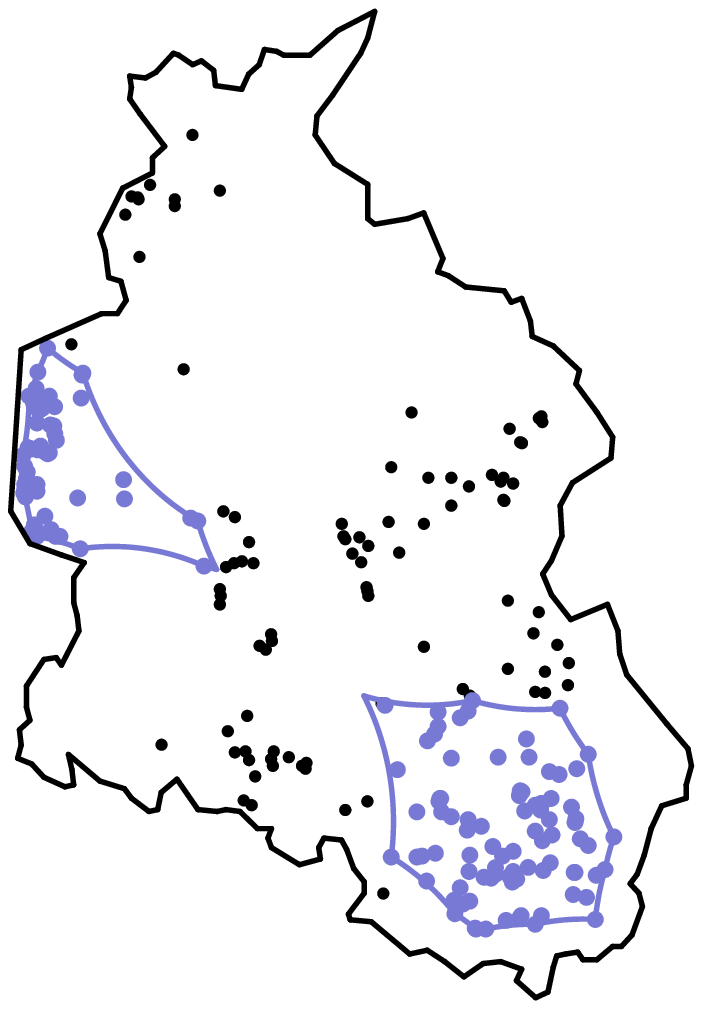}}
\put(123,-87){\includegraphics[height=4.75cm, width=0.279\textwidth]{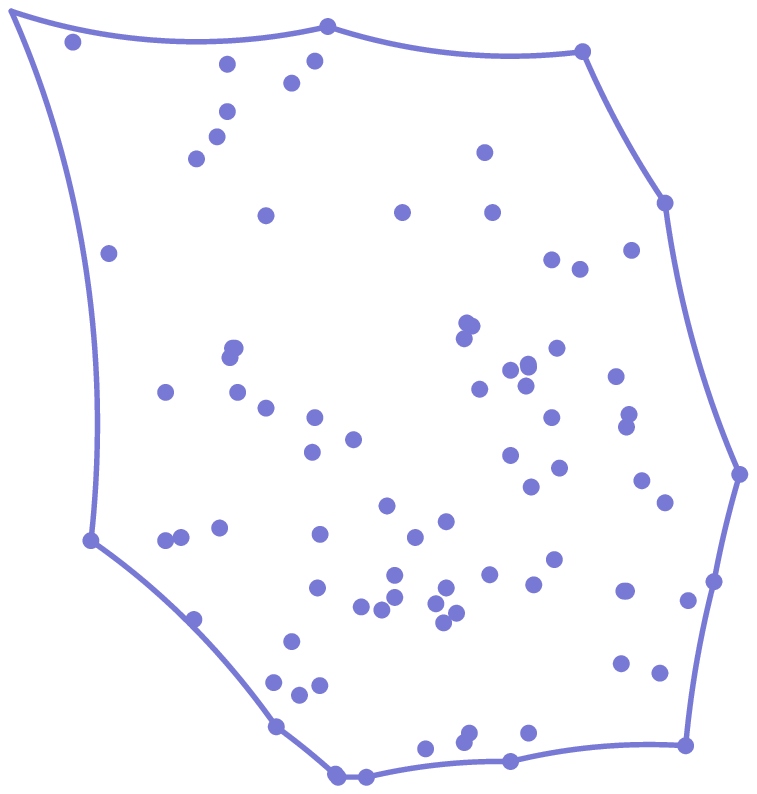}}
\psline[fillcolor=white,linecolor=black,linewidth=0.2mm ](3.4,-1.3)(3.68,-.5)
\psline[fillcolor=white,linecolor=black,linewidth=0.2mm ](3.4,-1.3)(3.49,-1.4)
\psline[fillcolor=white,linecolor=black,linewidth=0.2mm ](8.,-1.3)(8.28,-.5)
\psline[fillcolor=white,linecolor=black,linewidth=0.2mm ](8.,-1.3)(8.09,-1.4)
\psline[fillcolor=white,linecolor=black,linewidth=0.2mm ](4.98,-1)(5.1,-1.05)
\psline[fillcolor=white,linecolor=black,linewidth=0.2mm ](5.1,-1.05)(5.25,-1.4)
\psline[fillcolor=white,linecolor=black,linewidth=0.2mm ](5.25,-1.4)(5.18,-1.55)
\put(230,7.9){\includegraphics[height=8.3cm, width=0.43\textwidth]{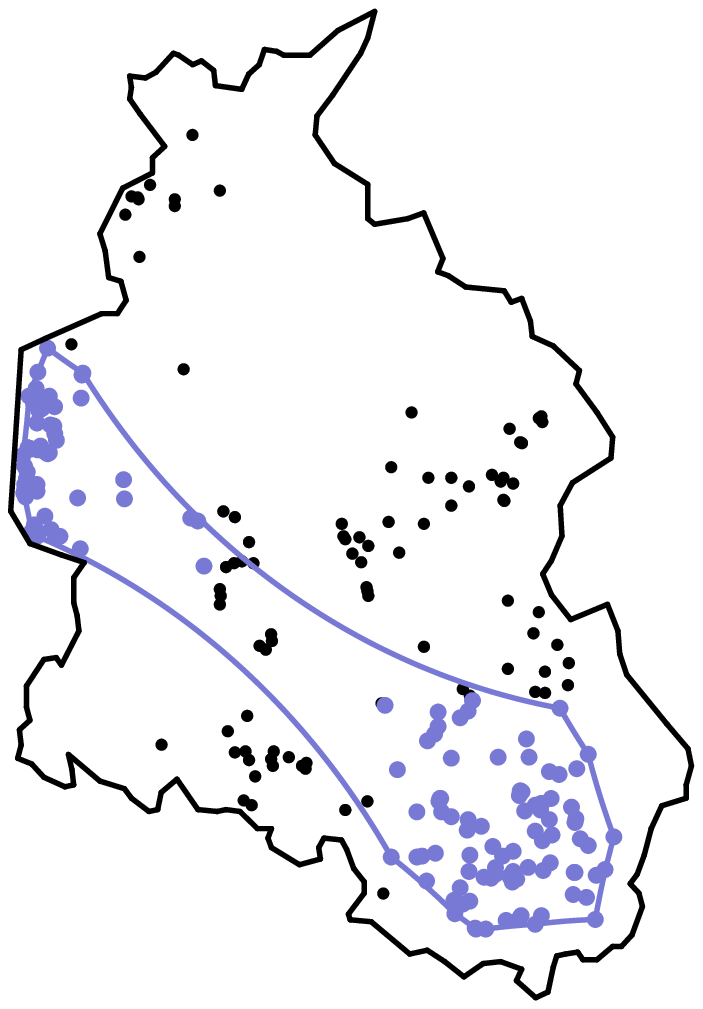}}
 \end{picture}\vspace{2.6cm}\caption{The set $\mathcal{X}_{n}^+(t_2)$ is represented in blue. $C_{0.03}(\mathcal{X}_{n}^+(t_2))$ (left), $C_{0.3}(\mathcal{X}_{n}^+(t_2))$ (center) and $C_{0.9}(\mathcal{X}_{n}^+(t_2))$ (right).}\label{tttt2}
\end{figure}

In Figure \ref{rty}, we show $\mathcal{X}_n^+(t)$ in blue for the data corresponding to 322 residential locations of cases of diagnosed of leukaemia on Lancashire and Greater Manchester, in the North West of England by considering two different values of the parameter $t>0$. This data set will be presented in detail in Section \ref{gorr} where a procedure for calculating, in practise, the sequence $D_n$ and hence $\mathcal{X}_n^+(t)$ and $\mathcal{X}_n^-(t)$ will be proposed. In these two cases, the $r-$convex hulls of $\mathcal{X}_n^+(t)$ are represented for different values of the parameter $r$ in Figures \ref{tttt1} and \ref{tttt2}, respectively. It is clear, see Figure \ref{tttt2}, that the influence of the parameter $r$ is important for estimating $G(t)$. Reconstructing it in a data-driven way is necessary.

\begin{proposition}\label{mostramaisenGlambda}Let $G(t)$ be a compact and nonempty level set. Under assumptions (A), (D) and (K), let $\mathcal{X}_n$ be a random sample generated from a distribution with density function $f$ and $\mathcal{X}_n^+(t)$ and $\mathcal{X}_n^-(t)$ established in Definition \ref{jejejeje}. Then,
\begin{equation*}\mathbb{P}\left(\mathcal{X}_n^+(t)\subset G(t),\mbox{ }\mathcal{X}_n^-(t)\subset G(t)^c , \mbox{ eventually}\right)=1.\end{equation*}
 \end{proposition}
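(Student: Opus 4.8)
The plan is to derive the proposition from the stronger, essentially deterministic, statement
\begin{equation}\label{eq:sandwich}
\mathbb{P}\Big(f_n(x)\ge t-D_n\ \text{ for all }x\in G(t),\ \text{ and }\ f_n(x)<t+D_n\ \text{ for all }x\in G(t)^c,\ \text{eventually}\Big)=1.
\end{equation}
Granting (\ref{eq:sandwich}), the conclusion is immediate: on that event, and for $n$ large, if $X_i\in\mathcal{X}_n^+(t)$ then $f_n(X_i)\ge t+D_n$, so $X_i$ cannot lie in $G(t)^c$ (where $f_n<t+D_n$), whence $X_i\in G(t)$; symmetrically, if $X_i\in\mathcal{X}_n^-(t)$ then $f_n(X_i)<t-D_n$, so $X_i\notin G(t)$, i.e. $X_i\in G(t)^c$. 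This covers every sample point, in particular those far out in the tails of $f$, so no separate tail analysis is needed once (\ref{eq:sandwich}) is available.

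To prove (\ref{eq:sandwich}) I would split $\mathbb{R}^d$ into a thin collar around $\partial G(t)$ and its complement. Fix $\eta\in(0,\zeta)$ small enough that $t-\eta>l$ and $t+\eta<u$; then $S_\eta:=\{t-\eta\le f\le t+\eta\}$ is a compact set contained in $\overline{G(l-\zeta)}\setminus\interior(G(u+\zeta))\subset U$, so by (A) the density is of class $\mathcal{C}^p$ on a neighbourhood of $S_\eta$ lying in $U$. On such a neighbourhood the usual bias/stochastic decomposition of the kernel estimator under (K) applies: with $h$ the bandwidth, of order $(\log n/n)^{1/(d+2p)}$ by (K), the bias is $O(h^p)$ and the fluctuation $O(\sqrt{\log n/(nh^d)})$, both of exact order $(\log n/n)^{p/(d+2p)}$. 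This is exactly the auxiliary estimate of Walther (1997) reproduced in Section \ref{apendix}, and it yields a finite constant $c_1$ with
\begin{equation*}
\limsup_{n\to\infty}\Big(\frac{n}{\log n}\Big)^{p/(d+2p)}\sup_{x\in S_\eta}\big|f_n(x)-f(x)\big|\le c_1\qquad\text{a.s.}
\end{equation*}
Taking the constant $M$ of (D) with $M>c_1$ --- this is the role of ``$M$ big enough'' --- gives $\sup_{x\in S_\eta}|f_n(x)-f(x)|<D_n$ eventually, a.s.; hence, eventually a.s., $f_n\ge f-D_n\ge t-D_n$ on $S_\eta\cap G(t)$ and $f_n\le f+D_n<t+D_n$ on $S_\eta\cap G(t)^c$.

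Off the collar one only needs $f_n$ to stay on the correct side of the threshold, which is much cruder because $f$ is there bounded away from $t$. Write $f_n=\mathbb{E}f_n+(f_n-\mathbb{E}f_n)$ with $\mathbb{E}f_n=K_H*f$. The stochastic part is controlled uniformly over $\mathbb{R}^d$, and with no regularity assumption on $f$, by the same concentration bound used above: $\sup_{x\in\mathbb{R}^d}|f_n(x)-\mathbb{E}f_n(x)|\to 0$ almost surely. For the bias part, since $K$ has bounded support $K_H*f$ averages $f$ only over a ball of radius $O(h)\to 0$; using the continuity of $f$ together with the compactness of $G(t+\eta)$ and of $G(t-\eta/2)$ one checks that, for $n$ large, $\mathbb{E}f_n(x)\ge t+\eta/2$ whenever $f(x)>t+\eta$ and $\mathbb{E}f_n(x)\le t-\eta/2$ whenever $f(x)<t-\eta$. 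Consequently, eventually a.s., $f_n>t>t-D_n$ on $G(t)\setminus S_\eta=\{f>t+\eta\}$ and $f_n<t<t+D_n$ on $G(t)^c\setminus S_\eta=\{f<t-\eta\}$. Combining this with the collar estimate gives both clauses of (\ref{eq:sandwich}), and the proposition follows.

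The only non-routine ingredient is the almost sure uniform rate of the kernel estimator on the collar $S_\eta$, which I would quote directly from the results of Walther (1997) collected in Section \ref{apendix}; the delicate point there is the \emph{matching of constants}, namely that $D_n$ --- through the constant $M$ of (D) --- must be chosen strictly larger than the constant $c_1$ appearing in that rate, so that the classification is correct without any additional penalty term. The secondary technical issue is passing from the compact collar to the possibly unbounded set $G(t)^c$; I would deal with it as above, by peeling off a uniform-in-$x$ concentration bound for $f_n-\mathbb{E}f_n$ (valid on all of $\mathbb{R}^d$ under (K), regardless of $f$) and observing that the residual convolution bias is inoffensive there since $f$ is bounded away from $t$.
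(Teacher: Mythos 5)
Your reduction to the two uniform statements $\inf_{G(t)}f_n\ge t-D_n$ and $\sup_{G(t)^c}f_n<t+D_n$ (eventually, a.s.) is exactly the paper's first move, and your treatment of the region near $\partial G(t)$ coincides with the paper's: on a compact neighbourhood of $\{l\le f\le u\}$ one invokes Walther's uniform rate (Proposition \ref{enPruebaTeorema3Walther} here) and absorbs its constant by taking $M$ in (D) large enough. That part is fine.

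The gap is in your treatment of the region away from the collar. There you replace the paper's appeal to Proposition \ref{enPruebaTeorema3Walther3} (Walther's equation (15), which directly gives $f_n>u+w/2$ on $G(u)\cap C^c$ and $f_n<l-w/2$ on $G(l)^c\cap C^c$) by a bias--variance argument resting on ``the continuity of $f$''. But assumption (A) only gives $f\in\mathcal{C}^p(U)$, where $U$ is a bounded neighbourhood of $\overline{G(l-\zeta)}\setminus\interior(G(u+\zeta))$; deep inside $G(u+\zeta)$ and outside $G(l-\zeta)$ the density is an arbitrary (possibly discontinuous, possibly unbounded) density. Hence the claims ``$\mathbb{E}f_n(x)\ge t+\eta/2$ whenever $f(x)>t+\eta$'' and ``$\mathbb{E}f_n(x)\le t-\eta/2$ whenever $f(x)<t-\eta$'' are not justified: with a signed kernel of order $p$ and no regularity of $f$ at $x$, $K_H*f(x)$ need not be close to $f(x)$, and even for a nonnegative kernel the convolution at a point just inside $\{f>t+\eta\}$ averages over a ball reaching into lower-density regions. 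Likewise, the uniform a.s. bound $\sup_{\mathbb{R}^d}|f_n-\mathbb{E}f_n|\to0$ normally requires $f$ bounded, which is not assumed globally. This off-collar control is precisely the nontrivial content of Walther's equation (15); either quote Proposition \ref{enPruebaTeorema3Walther3} for it (as the paper does, splitting by membership in the compact set $C$ rather than by the level of $f$), or supply a genuine proof that does not presuppose smoothness of $f$ off $U$.
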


Proposition \ref{alberto5} bounds the Euclidian distance between $G(t)$ and $\mathcal{X}_n^+(t)$ guaranteeing, in particular, that the set $\mathcal{X}_n^+(t)$ is nonempty eventually, see Figure \ref{jjjjjj}.

\begin{proposition}\label{alberto5}Let $G(t)$ be a compact and nonempty level set. Under assumptions (A) and (D), let $\mathcal{X}_n$ be a random sample generated from a distribution with density function $f$ and $\mathcal{X}_n^+(t)$ established in Definition \ref{jejejeje}. Then, for all $\epsilon>0$ it is verified that
$$\mathbb{P}\left( \sup_{x\in G(t)}d(x,\mathcal{X}_n^+(t))\leq \epsilon, \mbox{eventually}\right)=1.$$
\begin{figure}[h!]\centering
\begin{pspicture}(1,0)(14,5.35)
\psccurve[showpoints=false,fillstyle=solid,fillcolor=white,linecolor=black,linewidth=0.2mm,linearc=3](5,1)(5,4)(8,3.5)(10,4.5)(10,1)
\pscircle[ linecolor=black,linewidth=0.2mm,linearc=3,linestyle=dashed,dash=3pt 2pt](9.7,4.3){.7}
\psdots*[dotsize=2.5pt](9.7,4.3)
\psdots*[dotsize=2.5pt](9.7,3.8)
\rput(9.8,5.3){$\tiny{B_\epsilon(x_1)}$}
\rput(9.9,4.07){$\tiny{x_{n_1}^+}$}
\pscircle[ linecolor=black,linewidth=0.2mm,linearc=3,linestyle=dashed,dash=3pt 2pt](5.0,4.){.7}
\psdots*[dotsize=2.5pt](5,4.)
\rput(4.75,5){$\tiny{B_\epsilon(x_2)}$}
\psdots*[dotsize=2.5pt](5,3.5)
\rput(5.1,3.7){$\tiny{x_{n_2}^+}$}
\rput(4.2,1){$G(t)$}
\end{pspicture}
\caption{$x_1,\mbox{ }x_2\in G(t)$ and $x_{n_{i}}^+\in B_\epsilon(x_i),\mbox{ }i\in \{1,2\}$.}\label{jjjjjj}
\label{Figuraaa2}
\end{figure}
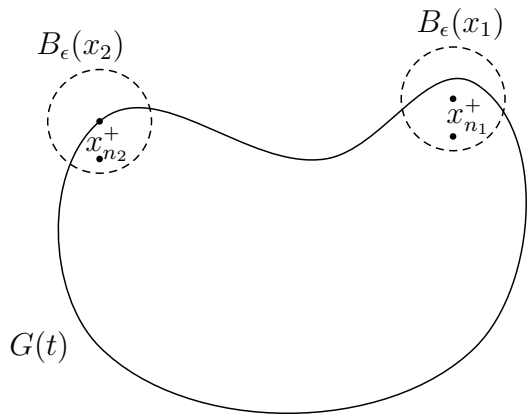

 \end{proposition}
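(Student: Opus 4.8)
The plan is a covering argument. I want to show that, with probability one, for all large $n$ every point of $G(t)$ lies within distance $\epsilon$ of some point of $\mathcal{X}_n^+(t)$; by compactness of $G(t)$ this reduces to checking finitely many balls, and on each of those the conclusion will follow from Borel--Cantelli. First I would replace $\mathcal{X}_n^+(t)$ by the sample points lying comfortably inside $G(t)$. By the uniform consistency of the kernel estimator --- under (K), and with $M$ large in (D), one has $\sup_x|f_n(x)-f(x)|\le D_n$ eventually with probability one, which is precisely why $D_n$ is chosen of that order, see Section \ref{apendix} --- together with $D_n\downarrow 0$, for any fixed $\eta>0$ every sample point $X_i$ with $f(X_i)\ge t+\eta/2$ satisfies $f_n(X_i)\ge t+\eta/2-D_n\ge t+D_n$ for $n$ large, i.e.\ $X_i\in\mathcal{X}_n^+(t)$. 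Hence it suffices to fix a suitably small $\eta>0$ and prove that, with probability one and eventually, $\sup_{x\in G(t)}d\bigl(x,\mathcal{X}_n\cap\{f\ge t+\eta/2\}\bigr)\le\epsilon$.

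The only genuinely geometric point is the choice of $\eta$. Using $|\nabla f|\ge m>0$ on $U$ and $t\in(l,u)$, I would show that for every $\epsilon>0$ there is $\eta>0$ with $G(t+\eta)\ne\emptyset$ and $G(t)\subset G(t+\eta)\oplus B_{\epsilon/2}(0)$. Indeed, a point $x\in G(t)$ with $f(x)\ge t+\eta$ already lies in $G(t+\eta)$; the set $\{x\in G(t):f(x)\le t+\eta\}$ shrinks to $\partial G(t)=\{f=t\}$ as $\eta\downarrow0$ by a compactness argument, so for $\eta$ small it is contained in the part of $U$ where $|\nabla f|\ge m$; following the unit-speed gradient flow of $f$ from such an $x$, the value of $f$ increases at rate at least $m$, so after arc length $\eta/m$ one reaches $G(t+\eta)$, having moved a distance at most $\eta/m$, and it is enough to take $\eta<m\epsilon/2$ (small enough also that the flow stays inside $U$). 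Finally, by continuity of $f$, I would choose $\delta\in(0,\epsilon/2)$ so that $f\ge t+\eta/2$ on the compact set $V:=G(t+\eta)\oplus\overline{B_\delta(0)}$.

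Now cover $G(t+\eta)$ by finitely many balls $B_{\delta/2}(z_1),\dots,B_{\delta/2}(z_N)$ with $z_i\in G(t+\eta)$. Each $B_{\delta/2}(z_i)\subset V$, so $\mathbb{P}\bigl(X\in B_{\delta/2}(z_i)\bigr)\ge(t+\eta/2)\,\mu\bigl(B_{\delta/2}(z_i)\bigr)>0$ and $\sum_n\mathbb{P}\bigl(\mathcal{X}_n\cap B_{\delta/2}(z_i)=\emptyset\bigr)<\infty$; by Borel--Cantelli and a finite union over $i$, with probability one there is $n_0$ with the property that for every $n\ge n_0$ each $B_{\delta/2}(z_i)$ contains a sample point, which by the first step also belongs to $\mathcal{X}_n^+(t)$ for $n$ large. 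To finish, fix such an $n$ and $x\in G(t)$: the geometric step gives $y\in G(t+\eta)$ with $\|x-y\|\le\epsilon/2$, then $y\in B_{\delta/2}(z_i)$ for some $i$ and that ball contains some $X_j\in\mathcal{X}_n^+(t)$ with $\|y-X_j\|<\delta/2<\epsilon/2$, so $d(x,\mathcal{X}_n^+(t))\le\|x-X_j\|<\epsilon$; taking the supremum over $x\in G(t)$ proves the claim (and incidentally shows $\mathcal{X}_n^+(t)\ne\emptyset$ eventually). I expect the geometric tube estimate of the second step --- producing a single $\eta$ that works uniformly over $\partial G(t)$ while keeping the gradient flow inside $U$ --- to be the main obstacle; the rest is the standard finite-cover-plus-Borel--Cantelli scheme combined with the uniform consistency of $f_n$.
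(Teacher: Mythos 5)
Your overall architecture is the same as the paper's: push $G(t)$ slightly inward to a higher level set, cover by finitely many small balls on which $f$ is bounded below, apply Borel--Cantelli to each ball, and then verify that the sample points so located actually satisfy $f_n\geq t+D_n$. Your ``main obstacle'' is not actually an obstacle: the inclusion $G(t)\subset G(t+\eta)\oplus B_{\epsilon/2}[0]$ for $\eta\leq \min\{(m/2)c,\,m\epsilon/4,\,u-t\}$ is exactly Proposition~\ref{enPruebaTeorema3Walther4} (Lemma~2(b) of Walther, 1997), already quoted in the appendix, so there is no need to redo the gradient-flow argument. The paper reaches the same place from the other direction, using the rolling-ball condition to place, inside each $B_\epsilon(x)$, a small ball contained in $G(t)\ominus(\epsilon/4)B_1[0]\subset G(t+T)$; the two routes are interchangeable.

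The one genuine flaw is your first step. You invoke $\sup_x|f_n(x)-f(x)|\leq D_n$ eventually, citing the appendix, but Proposition~\ref{enPruebaTeorema3Walther} only controls $\sup_C|f_n-f|$ on a compact set $C\subset U$; under assumption~(A) the density is only $\mathcal{C}^p$ on $U$, so no global uniform rate (indeed no global uniform consistency of the bias) is available, and the claim as stated is unjustified. The conclusion you need --- that a sample point $X_j$ with $f(X_j)\geq t+\eta/2$ eventually lies in $\mathcal{X}_n^+(t)$ --- is still true, but it requires the two-case split the paper performs in its Step~4: if $X_j\in C$, use $\sup_C|f_n-f|\leq D_n$ together with $D_n<\eta/4$ eventually to get $f_n(X_j)\geq t+\eta/2-D_n\geq t+D_n$; if $X_j\notin C$, then $X_j\in G(u)\cap C^c$ (because $G(l)\setminus\interior(G(u))\subset C$), and Proposition~\ref{enPruebaTeorema3Walther3} gives $f_n(X_j)>u+w/2\geq t+D_n$ eventually. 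With that repair (and the analogous care in checking $f\geq t+\eta/2$ on your tube $V$, where again points outside $U$ must be handled via $\interior(G(u+\zeta))$), your proof goes through; note also that this step forces assumption~(K), which the proposition's statement omits but the paper's own proof uses as well.
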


Corollary \ref{xeneralizacionsoporte} shows, in particular, that $\mathcal{X}_n^+(t)$ is a consistent estimator for $G(t)$ in Hausdorff distance.

\begin{corollary}\label{xeneralizacionsoporte}Let $G(t)$ be a compact and nonempty level set. Under assumptions (A), (D) and (K), let $\mathcal{X}_n$ be a random sample generated from a distribution with density function $f$ and $\mathcal{X}_n^+(t)$ established in Definition \ref{jejejeje}. Then, for all $\epsilon>0$ it is verified that
$$\mathbb{P}\left( d_H( G(t),\mathcal{X}_n^+(t) )\leq \epsilon,  \mbox{ eventually}\right)=1.$$
\end{corollary}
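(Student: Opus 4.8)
The plan is to split the Hausdorff distance into its two defining suprema and to control each one separately using the two preceding results. Fix $\epsilon>0$. By definition,
$$d_H\bigl(G(t),\mathcal{X}_n^+(t)\bigr)=\max\Bigl\{\sup_{x\in G(t)}d(x,\mathcal{X}_n^+(t)),\ \sup_{y\in\mathcal{X}_n^+(t)}d(y,G(t))\Bigr\},$$
so it suffices to show that, with probability one, each of the two terms on the right-hand side is at most $\epsilon$ for all $n$ large enough. Note first that Proposition \ref{alberto5} guarantees $\mathcal{X}_n^+(t)\neq\emptyset$ eventually, so that the quantity $d_H(G(t),\mathcal{X}_n^+(t))$ is well defined along the relevant subsequence of indices.

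The first supremum is handled directly by Proposition \ref{alberto5}, which states precisely that $\mathbb{P}\bigl(\sup_{x\in G(t)}d(x,\mathcal{X}_n^+(t))\leq\epsilon,\ \mbox{eventually}\bigr)=1$. For the second supremum I would invoke Proposition \ref{mostramaisenGlambda}, which yields $\mathbb{P}\bigl(\mathcal{X}_n^+(t)\subset G(t),\ \mbox{eventually}\bigr)=1$; on that event, once $n$ is large enough every point $y\in\mathcal{X}_n^+(t)$ lies in $G(t)$, hence $d(y,G(t))=0$ and the supremum vanishes, so in particular it is bounded by $\epsilon$.

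It remains only to intersect the two probability-one events produced above. On this intersection, which again has probability one, there is a (random) index beyond which both bounds hold simultaneously, and therefore $d_H(G(t),\mathcal{X}_n^+(t))=\max\{\,\leq\epsilon,\,0\,\}\leq\epsilon$ for all such $n$. This gives $\mathbb{P}\bigl(d_H(G(t),\mathcal{X}_n^+(t))\leq\epsilon,\ \mbox{eventually}\bigr)=1$, which is the claim. There is essentially no obstacle here: the corollary is a direct combination of Propositions \ref{mostramaisenGlambda} and \ref{alberto5}, the only point worth noting being that the ``$\sup_{y}d(y,G(t))$'' half of the Hausdorff distance comes for free from the inclusion $\mathcal{X}_n^+(t)\subset G(t)$, without the need for a separate covering-type argument.
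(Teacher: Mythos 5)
Your proof is correct and coincides with the paper's intended argument: the paper simply states that the corollary is a straightforward consequence of Propositions \ref{mostramaisenGlambda} and \ref{alberto5}, and your decomposition of $d_H$ into the two suprema, handled respectively by the inclusion $\mathcal{X}_n^+(t)\subset G(t)$ and by the covering result, is exactly the reasoning being invoked.
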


\section{Main results}\label{mainresults}

Theorem \ref{rmaiorr02} will show that $\hat{r}_0(t)$ consistently estimates $r_0(t)$.

\begin{theorem}\label{rmaiorr02}Let $G(t)$ be a compact, nonempty and nonconvex level set. Under assumptions (A), (D) and (K), let $\mathcal{X}_n$ be a random sample generated from a distribution with density function $f$, $r_0(t)$ and $\hat{r}_0(t)$ established in Definitions \ref{r_0_t} and \ref{jejejeje}, respectively. Then,
$ \hat{r}_0(t)$  converges to $r_0$, almost surely.

\end{theorem}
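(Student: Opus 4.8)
\medskip

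\noindent\emph{Strategy.} The plan is to prove the two one-sided statements $\liminf_{n}\hat r_0(t)\ge r_0(t)$ and $\limsup_{n}\hat r_0(t)\le r_0(t)$ almost surely. Since $G(t)$ is nonconvex, $r_0(t)<\infty$, and by Definition~\ref{r_0_t} together with the nesting property of $r$-convexity one has $C_\gamma(G(t))=G(t)$ for every $\gamma<r_0(t)$. I will also use that, for a fixed compact set $A$, the map $\gamma\mapsto C_\gamma(A)$ is nondecreasing for the inclusion: if $y\notin C_\gamma(A)$ there is an open ball of radius $\gamma$ containing $y$ and missing $A$, and a sub-ball of radius $\gamma'<\gamma$ around $y$ still contains $y$ and misses $A$, so $y\notin C_{\gamma'}(A)$. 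Hence $\{\gamma>0:C_\gamma(\mathcal X_n^+(t))\cap\mathcal X_n^-(t)=\emptyset\}$ is an interval of the form $(0,\hat r_0(t))$ or $(0,\hat r_0(t)]$, so that ``$\gamma$ does not belong to this set'' forces $\hat r_0(t)\le\gamma$. Everything below is carried out on the almost sure event on which Proposition~\ref{mostramaisenGlambda} holds, on which $\varepsilon_n:=\sup_{x\in G(t)}d(x,\mathcal X_n^+(t))\to 0$ (Proposition~\ref{alberto5}), and on which $\sup_U|f_n-f|\le D_n$ eventually --- this last fact being exactly the uniform rate for the kernel estimator that calibrates assumptions (D) and (K).

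\medskip

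\noindent\emph{Lower bound.} Fix $\gamma\in(0,r_0(t))$, so $C_\gamma(G(t))=G(t)$. By Proposition~\ref{mostramaisenGlambda}, eventually $\mathcal X_n^+(t)\subset G(t)$ and $\mathcal X_n^-(t)\subset G(t)^c$; since any ball missing $G(t)$ also misses $\mathcal X_n^+(t)$, this gives $C_\gamma(\mathcal X_n^+(t))\subset C_\gamma(G(t))=G(t)$, hence $C_\gamma(\mathcal X_n^+(t))\cap\mathcal X_n^-(t)\subset G(t)\cap G(t)^c=\emptyset$ and $\hat r_0(t)\ge\gamma$ eventually. Letting $\gamma_k\uparrow r_0(t)$ yields $\liminf_n\hat r_0(t)\ge r_0(t)$ almost surely.

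\medskip

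\noindent\emph{Upper bound: the pocket point.} Fix $\gamma>r_0(t)$ and pick $\gamma_1\in(r_0(t),\gamma)$; then $C_{\gamma_1}(G(t))\supsetneq G(t)$. The key preliminary step is to exhibit $z_0$ and $\delta_0>0$ with $\overline{B_{\delta_0}(z_0)}\subset\interior\!\big(C_{\gamma_1}(G(t))\big)\cap G(t)^c\cap U$, which yields $\sup_{\overline{B_{\delta_0}(z_0)}}f<t$ and, taking $z_0$ close to $\partial G(t)\subset U$ where $f\equiv t>0$, also $f>0$ on $B_{\delta_0}(z_0)$, hence $\mathbb P\big(B_{\delta_0/2}(z_0)\big)>0$. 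Establishing the existence of such a $z_0$ is the delicate point, and I expect it to be the main obstacle: one argues from the gradient hypothesis in (A) that $G(t)$ is topologically regular ($G(t)=\overline{\interior G(t)}$), so that if the pocket $C_{\gamma_1}(G(t))\setminus G(t)$ had empty interior then $C_{\gamma_1}(G(t))=\overline{\interior G(t)}=G(t)$, contradicting $\gamma_1>r_0(t)$; and one checks that this (now nonempty) interior is attached to $\partial G(t)$ along a set with nonempty relative interior, so that it meets the collar $U$.

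\medskip

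\noindent\emph{Upper bound: conclusion.} With $z_0,\delta_0$ fixed, I claim that eventually \textup{(i)}~$\mathcal X_n^-(t)\cap B_{\delta_0/2}(z_0)\neq\emptyset$ and \textup{(ii)}~$B_{\delta_0/2}(z_0)\subset C_\gamma(\mathcal X_n^+(t))$, which together force $C_\gamma(\mathcal X_n^+(t))\cap\mathcal X_n^-(t)\neq\emptyset$, i.e.\ $\hat r_0(t)\le\gamma$. For \textup{(i)}: as $\mathbb P(B_{\delta_0/2}(z_0))>0$, infinitely many sample points land in $B_{\delta_0/2}(z_0)\subset U$, and for $n$ large each such $X_i$ satisfies $f_n(X_i)\le f(X_i)+D_n\le\sup_{\overline{B_{\delta_0}(z_0)}}f+D_n<t-D_n$, so $X_i\in\mathcal X_n^-(t)$. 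For \textup{(ii)}: if it fails there is an open ball $B_\gamma(x)$ with $B_\gamma(x)\cap B_{\delta_0/2}(z_0)\neq\emptyset$ (so $|x-z_0|<\gamma+\delta_0/2$) and $B_\gamma(x)\cap\mathcal X_n^+(t)=\emptyset$; since every $g\in G(t)$ lies within $\varepsilon_n$ of $\mathcal X_n^+(t)\subset G(t)$, this forces $B_{\gamma-\varepsilon_n}(x)\cap G(t)=\emptyset$, whence for $n$ large (so $\gamma-\varepsilon_n-\gamma_1\ge 0$) every ball $B_{\gamma_1}(y)$ with $|y-x|\le\gamma-\varepsilon_n-\gamma_1$ misses $G(t)$ and therefore, because $B_{\delta_0}(z_0)\subset C_{\gamma_1}(G(t))$, misses $B_{\delta_0}(z_0)$, i.e.\ $|y-z_0|\ge\gamma_1+\delta_0$. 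Thus $|x-z_0|\ge(\gamma-\varepsilon_n-\gamma_1)+(\gamma_1+\delta_0)=\gamma-\varepsilon_n+\delta_0$, which contradicts $|x-z_0|<\gamma+\delta_0/2$ once $\varepsilon_n<\delta_0/2$. Finally, letting $\gamma_k\downarrow r_0(t)$ gives $\limsup_n\hat r_0(t)\le r_0(t)$ almost surely, and combined with the lower bound this proves $\hat r_0(t)\to r_0(t)$ almost surely.
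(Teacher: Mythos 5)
Your two-sided architecture is exactly the paper's: the lower bound reproduces Lemma \ref{rooooj} (monotonicity of $\gamma\mapsto C_\gamma$, $\mathcal X_n^+(t)\subset G(t)$, $\mathcal X_n^-(t)\subset G(t)^c$, and $C_\gamma(\mathcal X_n^+(t))\subset C_\gamma(G(t))=G(t)$ for $\gamma<r_0(t)$), and your upper bound mirrors Lemma \ref{rmaiorr0}: find a small ball in the ``pocket'' $C_{\gamma_1}(G(t))\setminus G(t)$, show it eventually captures a point of $\mathcal X_n^-(t)$ (the paper's Lemma \ref{mostmenos}) and is eventually swallowed by $C_\gamma(\mathcal X_n^+(t))$. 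Your step (ii) is a nice, more elementary replacement for the paper's Minkowski-algebra computation $C_{r'}(G(t))\ominus B_{r-r'}[0]\subset C_r(\mathcal X_n^+(t))$: the direct metric contradiction via $|x-z_0|\ge\gamma-\varepsilon_n+\delta_0$ is correct and arguably cleaner.

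The genuine gap is precisely where you flag it: the existence of $\overline{B_{\delta_0}(z_0)}\subset\interior\bigl(C_{\gamma_1}(G(t))\bigr)\cap G(t)^c$. Your sketch via topological regularity does not close it. First, ``$C_{\gamma_1}(G(t))\setminus G(t)$ has empty interior $\Rightarrow C_{\gamma_1}(G(t))=G(t)$'' is not automatic: the argument only yields $C_{\gamma_1}(G(t))\setminus G(t)\subset\partial C_{\gamma_1}(G(t))$, and ruling out a lower-dimensional ``whisker'' added by the hull requires structural information about $r$-convex hulls, not just $G(t)=\overline{\interior G(t)}$. Second, even granting a nonempty pocket interior, ``one checks that it is attached to $\partial G(t)$'' is the whole content of the missing lemma. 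The paper gets both facts from the rolling condition $(R_\lambda^r)$ with $r=\lambda=m/k$ guaranteed by (A): Lemma \ref{puntointerior2} (imported from Rodr\'iguez-Casal and Saavedra-Nieves, 2014) produces $x_t\in\interior(C_{\gamma_1}(G(t)))\cap\partial G(t)$, and Lemma \ref{olvido2} gives the outward unit normal $\eta(x_t)$ and a tangent ball $B_{m/k}(y)\subset G(t)^c$, so that $x=x_t+\gamma\eta(x_t)$ with $\gamma$ small yields $B_\gamma(x)\subset\interior(C_{\gamma_1}(G(t)))\cap G(t)^c\cap\interior(G(l-\zeta))$; the last inclusion is what guarantees $f>l-\zeta>0$ and $f<t$ on the ball. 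You should also be careful with the claim $\sup_U|f_n-f|\le D_n$: Proposition \ref{enPruebaTeorema3Walther} only gives the rate on a compact $C\subset U$ containing a neighbourhood of $G(l)\setminus\interior(G(u))$, with $C^c$ handled separately by Proposition \ref{enPruebaTeorema3Walther3}; this is repairable in your step (i) by taking $\delta_0$ small enough that $\overline{B_{\delta_0}(z_0)}\subset C$, but as written it overstates what is available.
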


Once the consistency for the estimator of the smoothing parameter $\hat{r}_0(t)$ defined in (\ref{estimadorr0hat}) has been proved, it would be
natural to study the behavior of the random set $C_{\hat{r}_0(t)}(\mathcal{X}_n^+(t))$ as an estimator for the level set $G(t)$.  However, the
consistency can not be guaranteed in this case. This problem can be solved by considering the estimator $C_{r_n(t)}(\mathcal{X}_n^+(t))$ as the estimator of the level set $G(t)$ where $r_n(t)=\nu \hat{r}_0(t)$ for a fixed value $\nu\in (0,1)$. This estimator provides a consistent reconstruction of the theoretical level set and the convergence rates are provided in Theorem \ref{principal}. Before exposing this key result, it is necessary to present an interesting auxiliary proof. Proposition \ref{llllll} establishes that the estimator $C_{r_n(t)}(\mathcal{X}_n^+(t))$ is contained in the theoretical level set with probability one and for $n$ large enough.

\begin{proposition}\label{llllll}Let $G(t)$ be a compact, nonempty and nonconvex level set. Under assumptions (A), (D) and (K), let $\mathcal{X}_n$ be a random sample generated from a distribution with density function $f$, $r_0(t)$ and $\hat{r}_0(t)$ established in Definitions \ref{r_0_t} and \ref{jejejeje}. Let $\nu\in(0,1)$ be a fixed number and $r_n(t)=\nu\hat{r}_0(t)$. Then,
$$\mathbb{P}( C_{r_n(t)}(\mathcal{X}_n^+(t))\subset G(t),\mbox{ eventually})=1.$$
\end{proposition}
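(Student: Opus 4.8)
The plan is to reduce the inclusion to a purely deterministic monotonicity property of the $\gamma$-convex hull, invoked once $n$ is large enough that two probability-one events are simultaneously in force. Concretely, I would fix the almost sure event on which both the conclusion of Proposition \ref{mostramaisenGlambda} holds (so $\mathcal{X}_n^+(t)\subset G(t)$ for all large $n$) and the conclusion of Theorem \ref{rmaiorr02} holds (so $\hat r_0(t)\to r_0(t)$); each has probability one, hence so does their intersection, on which the statements below are to be read ``for $n$ large enough''.

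First I would pin down the radius. Since $G(t)$ is nonconvex we have $r_0(t)<\infty$, and assumption (A) gives $r_0(t)\geq m/k>0$; as $\nu\in(0,1)$ this yields $\nu r_0(t)<r_0(t)$ with both quantities in $(0,\infty)$. Because $r_n(t)=\nu\hat r_0(t)\to\nu r_0(t)$, it follows that eventually $r_n(t)<\tfrac{1}{2}(1+\nu)r_0(t)<r_0(t)$. Next I would exploit the supremum in (\ref{estimadorr0levelsetu}): for any $\gamma<r_0(t)=\sup\{\gamma'>0:C_{\gamma'}(G(t))=G(t)\}$ there exists $\gamma'>\gamma$ with $G(t)$ being $\gamma'$-convex, and since $r$-convexity passes to every smaller radius, $G(t)$ is $\gamma$-convex, that is $C_{\gamma}(G(t))=G(t)$. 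Taking $\gamma=r_n(t)$ gives $C_{r_n(t)}(G(t))=G(t)$ for all large $n$.

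It then remains to combine the two facts through monotonicity of the $\gamma$-convex hull: if $A\subset B$ then $C_{\gamma}(A)\subset C_{\gamma}(B)$, because any ball $B_{\gamma}(x)$ disjoint from $B$ is also disjoint from $A$, so $\{x:B_\gamma(x)\cap B=\emptyset\}\subset\{x:B_\gamma(x)\cap A=\emptyset\}$ and the intersection defining $C_{\gamma}(A)$ runs over the larger index set. Applying this with $A=\mathcal{X}_n^+(t)$, $B=G(t)$, $\gamma=r_n(t)$ and the inclusion $\mathcal{X}_n^+(t)\subset G(t)$ yields, for all large $n$,
$$C_{r_n(t)}(\mathcal{X}_n^+(t))\subset C_{r_n(t)}(G(t))=G(t),$$
which is the assertion. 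I expect the only genuinely delicate point to be the bookkeeping of the several ``eventually'' clauses — the inclusion $\mathcal{X}_n^+(t)\subset G(t)$, the bound $r_n(t)<r_0(t)$, and the well-definedness (finiteness) of $\hat r_0(t)$, hence of $r_n(t)$, noted after Definition \ref{jejejeje} — which must all hold from some (random) index onwards; this causes no real trouble, since a finite intersection of full-probability tail events is again a full-probability tail event.
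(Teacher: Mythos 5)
Your proposal is correct and follows essentially the same route as the paper: combine $\mathcal{X}_n^+(t)\subset G(t)$ eventually (Proposition \ref{mostramaisenGlambda}) with $r_n(t)\to\nu r_0(t)<r_0(t)$ (Theorem \ref{rmaiorr02}) and then use monotonicity of the $\gamma$-convex hull. The only cosmetic difference is that the paper chains $C_{r_n(t)}(\mathcal{X}_n^+(t))\subset C_{r_0(t)}(\mathcal{X}_n^+(t))\subset C_{r_0(t)}(G(t))=G(t)$, whereas you use $C_{r_n(t)}(G(t))=G(t)$ for the strictly smaller radius $r_n(t)<r_0(t)$, which neatly sidesteps whether the supremum defining $r_0(t)$ is attained.
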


\begin{theorem}\label{principal}Let $G(t)$ be a compact, nonempty and nonconvex level set. Under assumptions (A), (D) and (K), let $\mathcal{X}_n$ be a random sample generated from a distribution with density function $f$, let $\mathcal{X}_n^+(t)$ be established in Definition \ref{jejejeje} and let $r_n(t)=\nu \hat{r}_0(t)$ where $\nu\in (0,1)$ is a fixed number and $\hat{r}_0(t)$ defined in (\ref{estimadorr0hat}). Then, $$ d_H(C_{r_n(t)}(\mathcal{X}_n^+(t)),G(t))=O\left(\max\left\{ \left(  \frac{\log{n}}{n}\right)^{p/(d+2p)},\left(\frac{\log{n}}{n}\right)^{\frac{2}{d+1}}\right\}\right),$$ $\mbox{almost surely}$. The same convergence order holds for $d_\mu(C_{r_n(t)}(\mathcal{X}_n^+(t)),G(t))$.
\end{theorem}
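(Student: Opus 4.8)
The plan is to reduce both asserted rates to a single estimate, namely an almost-sure bound on $\sup_{y\in G(t)} d(y,C_n)$, where I abbreviate $C_n:=C_{r_n(t)}(\mathcal{X}_n^+(t))$ and $\delta_n:=d_H(C_n,G(t))$. By Proposition \ref{llllll}, $C_n\subseteq G(t)$ eventually almost surely, so $\sup_{x\in C_n}d(x,G(t))=0$ and hence $\delta_n=\sup_{y\in G(t)}d(y,C_n)$ eventually; moreover, since $C_n\subseteq G(t)$, the distance in measure equals $\mu(G(t)\setminus C_n)$, which I will bound by $O(\delta_n)$ at the end. I would work on the good events where, by Theorem \ref{rmaiorr02}, $r_n(t)=\nu\hat r_0(t)\in[a,b]$ for fixed $0<a<b<r_0(t)$; where, by assumption (K), the choice of $M$ in (D) and the usual uniform control of $\|f_n-f\|_\infty$ over the region $U$, one has $\mathcal{X}_n\cap G(t+2D_n)\subseteq\mathcal{X}_n^+(t)\subseteq\mathcal{X}_n\cap G(t)$ (sharpening Proposition \ref{mostramaisenGlambda}); and where the monotonicity $C_a(\mathcal{X}_n^+(t))\subseteq C_{r_n(t)}(\mathcal{X}_n^+(t))$ of the $r$-convex hull holds.

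Next I would bound the inner supremum. From the monotonicity and the inclusions above, $\sup_{y\in G(t)}d(y,C_n)\le\sup_{y\in G(t)}d\big(y,C_a(\mathcal{X}_n\cap G(t+2D_n))\big)$. I split $G(t)$ into $G(t+2D_n)$ and the annulus $G(t)\setminus G(t+2D_n)$. On the annulus, assumption (A)---the lower bound $|\nabla f|\ge m>0$ together with the Lipschitz bound on $\nabla f$ on $U$---forces every point $y$ with $t\le f(y)<t+2D_n$ to lie within distance $O(D_n)$ of $G(t+2D_n)$, by following a gradient-ascent curve of length $O(D_n)$. On $G(t+2D_n)$, I would use that, conditionally, the points $\mathcal{X}_n\cap G(t+2D_n)$ form an i.i.d. sample from the restriction of $f$, a density bounded below by $t>0$, and that by Theorem 2 of Walther (1997) all level sets $G(s)$ with $s$ near $t$ satisfy the rolling conditions with radii bounded below uniformly in $n$; then the granulometric / $r$-convex-hull estimation results of Walther (1997) and Rodr\'iguez-Casal (2007), recalled in Section \ref{apendix}, apply uniformly and give $\sup_{y\in G(t+2D_n)}d\big(y,C_a(\mathcal{X}_n\cap G(t+2D_n))\big)=O((\log n/n)^{2/(d+1)})$ almost surely. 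Combining the two contributions yields $\sup_{y\in G(t)}d(y,C_n)=O\big(\max\{D_n,(\log n/n)^{2/(d+1)}\}\big)$, and since $D_n\asymp(\log n/n)^{p/(d+2p)}$ this is precisely the asserted Hausdorff rate.

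For the distance in measure, every $y\in G(t)\setminus C_n$ satisfies $d(y,C_n)\le\delta_n$, so $G(t)\setminus C_n\subseteq(C_n\oplus\delta_n B_1(0))\setminus C_n$. Since $C_n$ is $r_n(t)$-convex with $r_n(t)\ge a$ and lies in the fixed bounded set $G(t)$, its boundary has $(d-1)$-dimensional Hausdorff measure bounded independently of $n$, and a Steiner-type outer estimate gives $\mu\big((C_n\oplus\delta_n B_1(0))\setminus C_n\big)\le c\,\mathcal{H}^{d-1}(\partial C_n)\,\delta_n+O(\delta_n^2)=O(\delta_n)$. Hence $d_\mu(C_n,G(t))=\mu(G(t)\setminus C_n)=O(\delta_n)$, the same rate.

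The hard part will be the second step: the $(\log n/n)^{2/(d+1)}$ ``fill-in'' estimate must be made to work for $\mathcal{X}_n^+(t)$, which is neither i.i.d. nor drawn from a fixed set, so one has to sandwich it between $\mathcal{X}_n\cap G(t+2D_n)$ and $\mathcal{X}_n\cap G(t)$, verify that the shrinking level sets $G(t+2D_n)$ have rolling radii, perimeter and density lower bound that are uniform in $n$ (via Walther's Theorem 2), and transfer the fixed-radius $r$-convex-hull theorems to the random radius $r_n(t)$ through the monotonicity of the $r$-convex hull. Everything else is deterministic geometry already recorded in the earlier statements and in the appendix.
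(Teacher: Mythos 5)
Your proposal follows essentially the same route as the paper: the inner inclusion $C_{r_n(t)}(\mathcal{X}_n^+(t))\subset G(t)$ from Proposition \ref{llllll}, the sandwich $\mathcal{X}_n\cap G(t+2D_n)\subset\mathcal{X}_n^+(t)$ (the paper's Lemma \ref{dddd}), the uniform covering bound of Walther's Lemma 3 (Proposition \ref{enPruebaTeorema3Walther5}) applied over the class $\mathcal{G}_{G(l)}(r_\nu)$ with $\epsilon_n=(C\log n/n)^{2/(d+1)}$ to fill in $G(t+2D_n)$, and Lemma 2(b) of Walther (Proposition \ref{enPruebaTeorema3Walther4}) for the $O(D_n)$ annulus term. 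Your Steiner-type argument for $d_\mu$ is a reasonable way to finish that step, which the paper leaves implicit (and which can alternatively be obtained directly from the inclusion $G(t+2D_n)\ominus B_{3\epsilon_n}[0]\subset C_{r_n(t)}(\mathcal{X}_n^+(t))\subset G(t)$ together with the rolling conditions).
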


\begin{remark}If the smoothing parameter is unknown for the granulometric smoothing method then it provides the same convergence rates than the algorithm proposed but it incurs a penalty, see Theorem 3 in Walther (1997). The rates obtained in Theorem \ref{principal} do no depend on any penalty term because, although $r_0(t)$ is a priori unknown, it is estimated in a data-driven way from $\mathcal{X}_n$.\end{remark}

\section{A real example}\label{gorr}

The question of whether the geographical incidence of disease shows any tendency towards
clustering in geographical space has a long and rich history. For instance, do cases of disease tend to occur in proximity to other cases? The problem has become
more urgent in recent years in the light of concerns raised about possible links between disease incidence and potential sources of environmental contamination,
such as nuclear installations.

A priori we may expect to observe a certain amount of clustering due to natural background variation in the population from which events arise. For example,
cases of cancer will always cluster because of the distribution of population at risk. In such instances, we are more interested in detecting evidence of clustering over and above this underlying environmental heterogeneity; in other words, in discovering whether the distribution of one type of event clusters relative to that of another. Then, it is really interesting to consider level sets that have a probability content greater than or equal to $1-\tau$ with $\tau\in(0,1)$ fixed a priori by the  practitioner. Therefore, the value of $t$ is unknown and an alternative level set definition can be presented:
$$
L(\tau)=\{x\in\mathbb{R}^d:f(x)\geq f_\tau\}\vspace{1.5mm}
$$where
$$
f_\tau=\sup
\left\{y\in(0,\infty):\int_{-\infty}^{\infty}f(t)\mathbb{I}_{\{f(t)\geq
y\}}\geq1-\tau\right\}. \vspace{2.8mm}
$$Given $\tau\in (0,1)$, the threshold $f_\tau$ must be estimated before selecting a method for reconstructing the level set. It is possible to consider numerical integration methods or calculate the $\tau-$quantile of the empirical distribution of $f_n(X_1),...,f_n(X_n)$, see Cadre (2006) and Cadre et al. (2009), respectively.

Then, from a practical point of view, reconstructing level sets $L(\tau)$ may be more interesting than estimating $G(t)$. In this work, the algorithm for calculating the estimator for the smoothing parameter defined in (\ref{estimadorr0hat}) is detailed next for this particular case. Of course, it could be easily adapted if level sets $G(t)$ must be reconstructed.

Once the value of $\tau\in (0,1)$ is given by the practitioner, it should be natural, as first step, to estimate the threshold $f_\tau$ and, then, determinate the sets $\mathcal{X}_n^+(\hat{f}_\tau)$ and $\mathcal{X}_n^-(\hat{f}_\tau)$. However, these two previous sets depend on the sequence $D_n$ that tends to zero when the sample size tends to infinity, see Definition \ref{jejejeje}. This sequence does not rely on $\mathcal{X}_n$. However, in practise and for a fixed value of $n$, we think that establishing some relationship between them could be really useful. For this, a bootstrap procedure will be proposed in order to estimate two values of two probability content verifying that $\hat{\tau}^-\leq \tau\leq \hat{\tau}^+$. In addition, it is assumed that $\hat{\tau}^+$ and $\hat{\tau}^-$ could not to be symmetric around the $\tau$. From these two values, two thresholds $\hat{f}_\tau^+$ and $\hat{f}_\tau^-$ can be determinated. Therefore, it would be possible to calculate the subsets $\mathcal{X}_{n,+}(\hat{f}_\tau^+)$ and $\mathcal{X}_{n,-}(\hat{f}_\tau^-)$, see Notation for remembering their definitions. Notice that, in most of cases, $\mathcal{X}_n\neq \mathcal{X}_{n,+}(\hat{f}_\tau^+)\cup \mathcal{X}_{n,-}(\hat{f}_\tau^-)$. Therefore, the information contained in $\mathcal{X}_n\setminus (\mathcal{X}_{n,+}(\hat{f}_\tau^+)\cup \mathcal{X}_{n,-}(\hat{f}_\tau^-))$ is not taken in advantage. To solve this, we propose to use  $k-$nearest neighbors considering $\mathcal{X}_{n,+}(\hat{f}_\tau^+)$ and $\mathcal{X}_{n,-}(\hat{f}_\tau^-)$ as training samples for classifying the full sample $\mathcal{X}_n$. In particular, the set $\mathcal{X}_n\setminus (\mathcal{X}_{n,+}(\hat{f}_\tau^+)\cup \mathcal{X}_{n,-}(\hat{f}_\tau^-))$ will be classified. Therefore, a value $\hat{k}\geq 1$ for the nearest neighbors must selected too. Below, the bootstrap procedure considered for calculating  $\hat{f}_\tau^+$, $\hat{f}_\tau^-$ and $\hat{k}$ will be explained in detail. Before, the algorithm for estimating $r_0(f_\tau)$ will be exposed. For simplicity in the exposition, the estimator will be denoted by $\hat{r}_0(\hat{f}_{\tau})$. Dichotomy algorithms will be considered. Therefore, a maximum number of iterations $J$ and two initial points $r_m$ and $r_M$ with $r_m<r_M$ must be selected. In practise, it is necessary to guarantee that $C_{r_m}(\mathcal{X}_{n,+}(\hat{f}_\tau^+))\cap \mathcal{X}_{n,-}(\hat{f}_\tau^-)= \emptyset$ and $C_{r_M}(\mathcal{X}_{n,+}(\hat{f}_\tau^+))\cap \mathcal{X}_{n,-}(\hat{f}_\tau^-)\neq \emptyset$, respectively. Then, a value close enough to zero must be chosen for $r_m$ and $r_M$ should be big enough for guaranteeing that $C_{r_M}(\mathcal{X}_{n,+}(\hat{f}_\tau^+))$ coincides or is almost equal to $\mbox{conv}(\mathcal{X}_{n,+}(\hat{f}_\tau^+))$. Of course, if $\mbox{conv}(\mathcal{X}_{n,+}(\hat{f}_\tau^+))\cap \mathcal{X}_{n,-}(\hat{f}_\tau^-)= \emptyset$ then $\hat{r}_0(\hat{f}_{\tau })=\infty$ and, therefore, $\hat{L}(\tau)=\mbox{conv}(\mathcal{X}_{n,+}(\hat{f}_\tau^+))$. Taking the previous comments under consideration, $\hat{r}_0(\hat{f}_{\tau })$ will be computed as follows:

\begin{enumerate}
\item Use $\hat{k}-$nearest neighbors algorithm considering $\mathcal{X}_{n,+}(\hat{f}_\tau^+)$ and $\mathcal{X}_{n,-}(\hat{f}_\tau^-)$ as a trainning sample for classifying the original full sample $\mathcal{X}_n$. The two resulting sets are denoted, for simplicity in the exposition, by the name of the original sets.
   \item In each iteration and while the number of them is smaller than $J$:\vspace{.08cm}
      \begin{enumerate}
      \item $r=(r_m+r_M)/2.$\vspace{.08cm}
      \item If $\mathcal{X}_{n,-}(\hat{f}_\tau^-)\cap C_{r}(\mathcal{X}_{n,+}(\hat{f}_\tau^+))\neq \emptyset$ then $r_M=r$.\vspace{.08cm}
       \item Otherwise, $r_m=r$.\vspace{.08cm}
       \end{enumerate}Then, $\hat{r}_0(\hat{f}_{\tau })=r_m$ and $\hat{L}(\tau)=C_{\hat{r}_0(\hat{f}_{\tau })}(\mathcal{X}_{n,+}(\hat{f}_\tau^+))$.\vspace{.08cm}
\end{enumerate}

It should be noted that the $r-$convex and convex hulls of a sample points can be easily computed (at least for the bidimensional case), see Pateiro-L\'opez and Rodr\'iguez-Casal (2010) and Renka (1996), respectively.

Once the algorithm for estimating the smoothing parameter was exposed, it only remains to detail the procedure in order to calculate the two thresholds, $\hat{f}_\tau^+$ and $\hat{f}_\tau^-$, and $\hat{k}$. A bootstrap method is proposed for selecting them by minimizing an error criteria between sets, the distance in measure. In an analogous way, other distances between sets could be considered.

To sum up, the next inputs should be given: the probability content $\tau\in(0,1)$, a big enough sample size $M$, a step $\Delta$ and a positive integer $I$ for defining the vectors $\tau^+=(\tau,\tau+\Delta,...,\tau+I\Delta)$ and $\tau^-=(\tau,\tau-\Delta,...,\tau-I\Delta)$ verifying $\tau+I\Delta\leq (n-1)/n$ and $\tau-I\Delta\geq 1/n$ in order to avoid empty sets, a number of bootstrap iterations $B$, a vector $k$ of length $K$ containing the number of nearest neighbors to be considered and, as before, a maximum number of iterations $J$ for the dichotomy algorithm. On the other hand, the selector for the bandwidth parameter of Bowman (1984) and Rudemo (1982) could be considered for density estimation in the univariate case since its generalization for the multivariate case is computed easily, see Duong (2007).

\begin{enumerate}
   \item Estimate by Monte Carlo approach the threshold $f_\tau^*$ in the bootstrap world:
    \begin{enumerate}
          \item Draw a bootstrap sample of size $M$ from $f_n$ where $f_n$ denotes the kernel estimator with bandwidth $H$ obtained from $\mathcal{X}_n$. It is denoted by $\mathcal{X}_M^*$.
          \item Obtain $f_\tau^*$ determinating the quantile $\tau$ of the empirical distribution of $f_n(\mathcal{X}_M^*)$. Therefore, $L^*(\tau)=\{f_n\geq f_\tau^*\}$ represents the theoretical level set in the bootstrap world.
  \end{enumerate}
        \item This step must be repeated $B$ times:
        \begin{enumerate}
            \item Draw a bootstrap sample of size $n$ from $f_n$. It will be denoted by $\mathcal{X}_n^*$.
            \item Calculate $f_n(\mathcal{X}_n^*)$ and $f_n^{*}(\mathcal{X}_n^*)$ where $f_n^{*}$ is the kernel estimator calculated from $\mathcal{X}_n^*$ with bandwidth $H^*$.
            \item In each iteration, while $j_1$ and $j_2$ are smaller or equal than $I+1$ and while $j_3$ is smaller than $K$:
                    \begin{enumerate}
                         \item Obtain $\hat{f}_{\tau,*}^+$ and $\hat{f}_{\tau,*}^-$ determinating the quantiles $\tau^+(j_1)$ and $\tau^-(j_2)$ of the empirical distribution of $f_n^{*}(\mathcal{X}_n^*)$, respectively.
                        \item Calculate $\mathcal{X}_{n,+}^*(\hat{f}_{\tau,*}^+)$, $\mathcal{X}_{n,-}^*(\hat{f}_{\tau,*}^-)$ and $\mathcal{X}_n^*\setminus (\mathcal{X}_{n,+}^*(\hat{f}_{\tau,*}^+)\cup\mathcal{X}_{n,-}^*(\hat{f}_{\tau,*}^-))$.
                        \item Use $k(j_3)-$nearest neighbors algorithm considering $\mathcal{X}_{n,+}^*(\hat{f}_{\tau,*}^+)$ and $\mathcal{X}_{n,-}^*(\hat{f}_{\tau,*}^-)$ as a trainning sample for classifying the full sample $\mathcal{X}_n^*$. The two resulting sets are denoted, for simplicity in the exposition again by $\mathcal{X}_{n,+}^*(\hat{f}_{\tau,*}^+)$ and $\mathcal{X}_{n,-}^*(\hat{f}_{\tau,*}^-)$.
                            \item Estimate the smoothing parameter from $\mathcal{X}_{n,+}^*(\hat{f}_{\tau,*}^+)$ and $\mathcal{X}_{n,-}^*(\hat{f}_{\tau,*}^-)$ using the previous dichotomy algorithm. It will be denoted by  $\hat{r}_0^*(\hat{f}_{\tau,*})$.
                                                       \item Estimate the error $d_\mu \left(L^*(\tau), C_{\hat{r}_0^*(\hat{f}_{\tau,*})}(\mathcal{X}_{n,+}^*(\hat{f}_{\tau,*}^+)) \right)$ induced by $f_n$ as follows:
                                 \begin{enumerate}
    \item Draw another bootstrap sample $\mathcal{Y}_M^*$ from $f_n$ of size $M$.
          \item Determinate which points in $\mathcal{Y}_M^*$ are and are not in $L^*(\tau)$:  $$\mathcal{Y}_{M,+}^{*}(f_\tau^*)=\{Y\in \mathcal{Y}_M^*:f_n(Y)\geq f_\tau^*\}$$and$$\mathcal{Y}_{M,-}^{*}(f_\tau^*)=\{Y\in \mathcal{Y}_M^*:f_n(Y)< f_\tau^*\}.$$
                                \item Calculate the cardinal of the union of the two sets \small{
                                            $$     \left\{\mathcal{Y}_{M,-}^{*}(f_\tau^*)\cap C_{\hat{r}_0^*(\hat{f}_{\tau,*})}(\mathcal{X}_{n,+}^*(\hat{f}_{\tau,*}^+))\right\}$$and$$ \left\{\mathcal{Y}_{M,+}^*(f_\tau^*)\cap C_{\hat{r}_0^*(\hat{f}_{\tau,*})}(\mathcal{X}_{n,+}^*(\hat{f}_{\tau,*}^+))^c\right\} .$$}Then, divide the result obtained by $M$.
                                                               \end{enumerate}
                    \end{enumerate}
                              \end{enumerate}
                              \item Select the values in $\tau^+$, $\tau^-$ and $k$ which provides the lowest empirical means of the $B$ errors calculated. They will be denoted by $\hat{\tau}^+$, $\hat{\tau}^-$ and $\hat{k}$, respectively.
                                   \item Obtain $\hat{f}_\tau^+$ and $\hat{f}_\tau^-$ determinating the quantiles $\hat{\tau}^+$ and $\hat{\tau}^-$ of the empirical distribution of $f_n(\mathcal{X}_n)$, respectively.
                    \end{enumerate}

In order to assess the applicability of this estimation method, we will consider a real data set. It derives from the study that provided the data in Henderson et al. (2002). It contains $1221$ pairs of points in Lancashire and Greater Manchester. Concretely, it contains the residential coordinates for the 233 cases of diagnosed chronic granulocytic leukemia registered between 1982 up to 1998 (inclusive), together with 988 controls. For the selection of controls, population counts in each of the 8131 census enumeration districts that make up the study-region, stratified by age and sex, were extracted from the 1991 census. The counts were then used to obtain a stratified random sample of two controls per case with coordinates given by their corresponding centroid coordinates (slightly jittered to avoid coincident points). In Figure \ref{oouigf}, the contour of Lancashire and Greater Manchester and the samples of cases and controls are showed. This data set is available on the  of Prof. Peter J. Diggle, Lancaster University.

The evidence for clustering of the cases of leukaemia in the North West of England will be studied. Analyzing whether the distribution of leukaemia mirrored that of the population as a whole or whether there was evidence, as implied by concerned local residents, of clustering. For this, it could help identify the peaks or the modes of the density estimation in the resulting surface allowing to visualize easily an excess of case intensity over that of population.

Then, we have estimated the level sets $L(\tau)$ from the samples of cases and controls for relatively high values of the probability content $\tau$. More specifically, the values of $\tau$ considered are $0.7$, $0.75$, $0.8$, $0.85$, $0.9$ and $0.95$. In addition, we have fixed $I=10$, $\Delta=\min\{(1-\tau-3/n)/I,(\tau-3/n)/I,0.01\} $ where $n$ denotes the sample size of cases or controls depending on the situation, $k=(1,3,5)$, $M=3000$, and $B=500$.

Following the previous algorithm, Table \ref{esti} shows the values obtained for $\hat{k}$, $\hat{\tau}^+$, $\hat{\tau}^-$ and $\hat{r}_0(\hat{f}_\tau)$ for the samples of cases and controls with the different values of $\tau$ considered. According to the results obtained for $\hat{r}_0(\hat{f}_\tau)$, $r-$convexity property plays an interesting role, mainly for the sample of cases. Only for $\tau$ equal to $0.95$ the level set estimator is convex.
The level set estimators for
$ $

$ $

\begin{figure}[h!]\vspace{-.6cm}
\begin{picture}(10,210)
\put(120,30){\includegraphics[height=7.2cm, width=.4\textwidth]{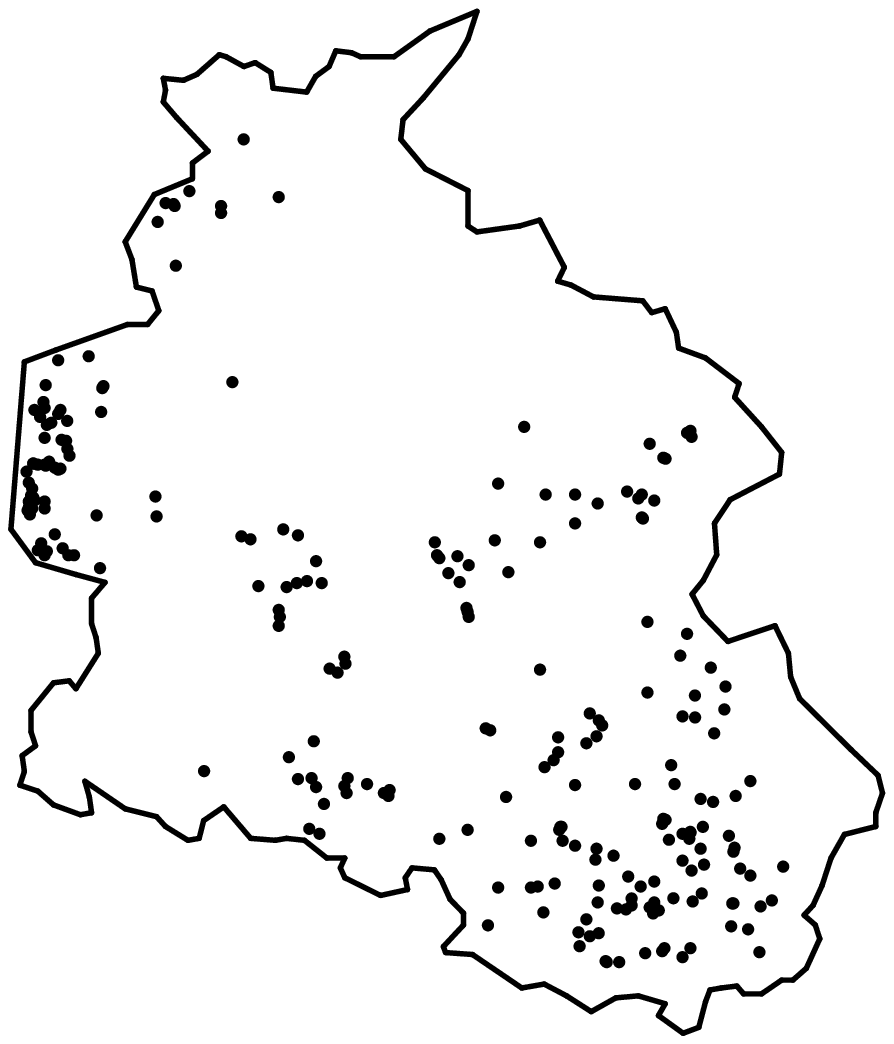}} 
\put(260,30){\includegraphics[height=7.2cm, width=.4\textwidth]{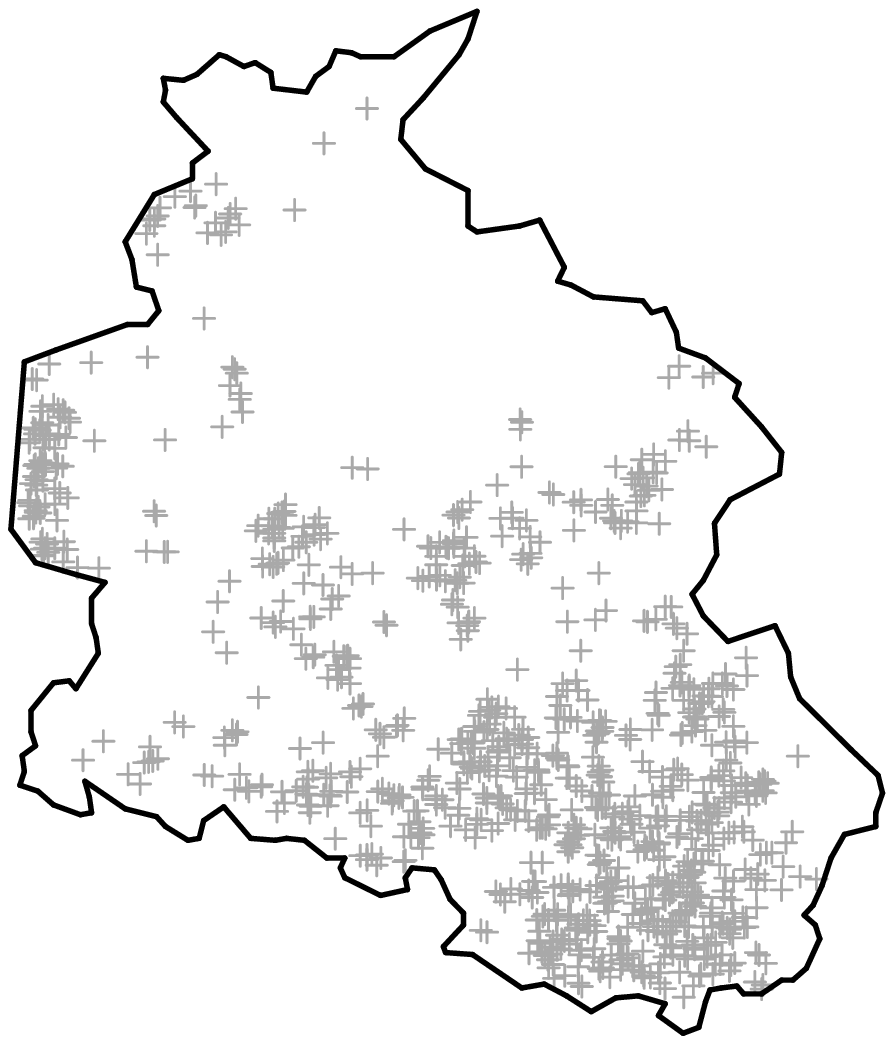}}
\put(-20,30){\includegraphics[height=7.2cm, width=.4\textwidth]{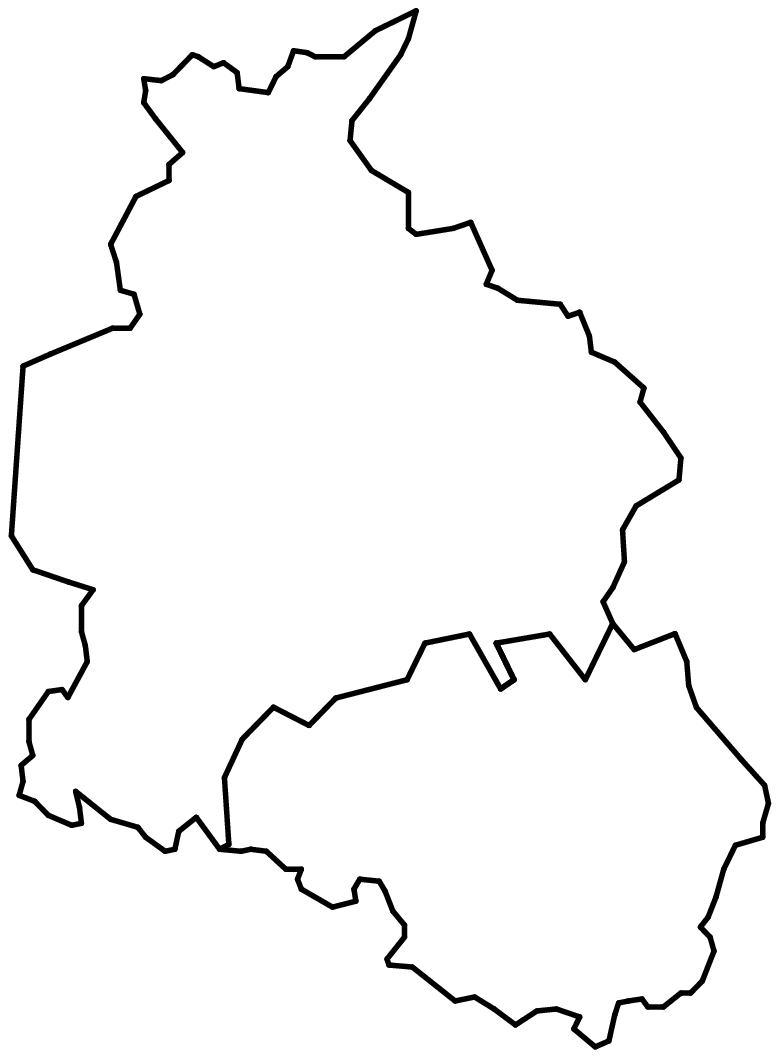}}
\put(33,150){Lancashire}
\put(64,110){Greater}
\put(49,100){Manchester}
\end{picture}\vspace{-2.2cm}
\caption{\normalsize Sub-regions of Lancashire and Greater Manchester on the North West of England (left), distribution of 233 cases of diagnosed leukaemia (center) and 988 controls on Lancashire and Greater Manchester (right) in the North West of England.}\label{oouigf}
\end{figure}

$ $

$ $

 \begin{table}[h!]
  \begin{center}
\begin{tabular}{rrrrrr}
    &  $\tau$& $\hat{\tau}^+$&$\hat{\tau}^-$&$\hat{k}$&$\hat{r}_0(\hat{f}_\tau)$\\
        \hline
     \multirow{7}{*}{Cases}& & & & \\
      & $0.7$&  0.7  &   0.66     &   5  &  0.529 \\
      & $0.75$&    0.75   &  0.69 &   5   & 0.571 \\
      & $0.8$&  0.84 &   0.70    &   1   &   0.382 \\
      & $0.85$&   0.85   &   0.8 &  3  &0.434     \\
       & $0.9$&   0.908  & 0.812   & 1   &   0.544   \\
        & $0.95$&   0.975   &  0.95   & 5  &  $\infty$   \\
     \multirow{7}{*}{Controls}& & & &  \\
       & $0.7$&   0.74 &     0.62     &  1  & 0.060  \\
      & $0.75$&    0.78   &  0.66 &  1   &0.265  \\
      & $0.8$&  0.8  &  0.8   &  1   &  0.178 \\
      & $0.85$&   0.93  &    0.85 & 1   & $\infty$    \\
       & $0.9$&   0.967  &  0.822  &  1 &  $\infty$    \\
        & $0.95$&    0.973   & 0.907   &    1& $\infty$  \\
    \hline
       \end{tabular}
 \end{center}\caption{Estimators of $k$, $\tau^+$, $\tau^-$ and $r_0(f_\tau)$ for the samples of cases and controls with different values of $\tau$.}\label{esti}
 \end{table}
\newpage

\begin{figure}[h!]\hspace{1.1cm}\vspace{-3cm}
\begin{picture}(-85,260)
\scalebox{0.8}[0.8]{
\put(-49,-56){\includegraphics[height=12 cm, width=.7\textwidth]{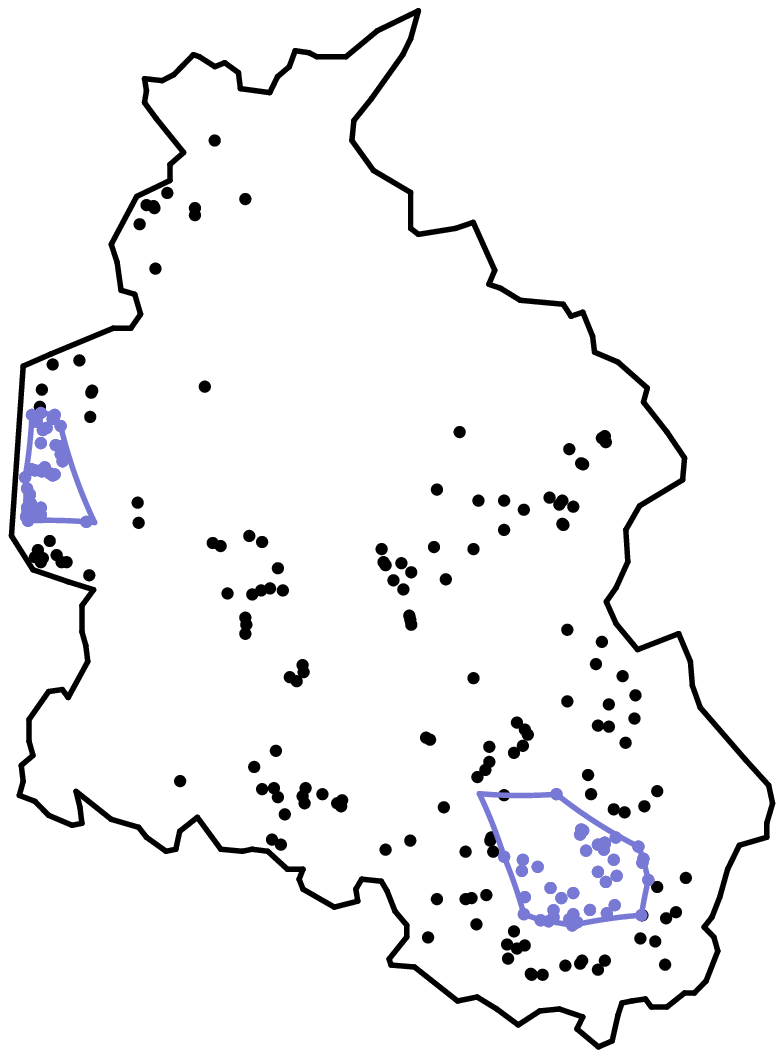}}
\put(157,-56){\includegraphics[height=12cm, width=.7\textwidth]{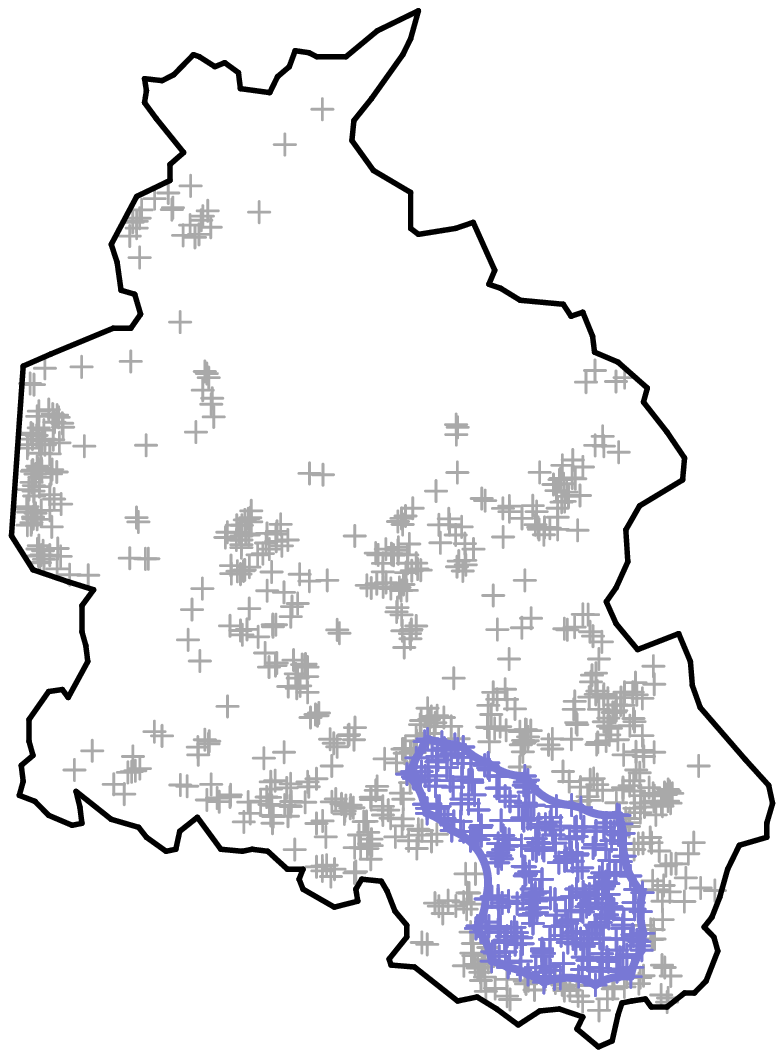}}
\put(-49,-311){\includegraphics[height=12 cm, width=.7\textwidth]{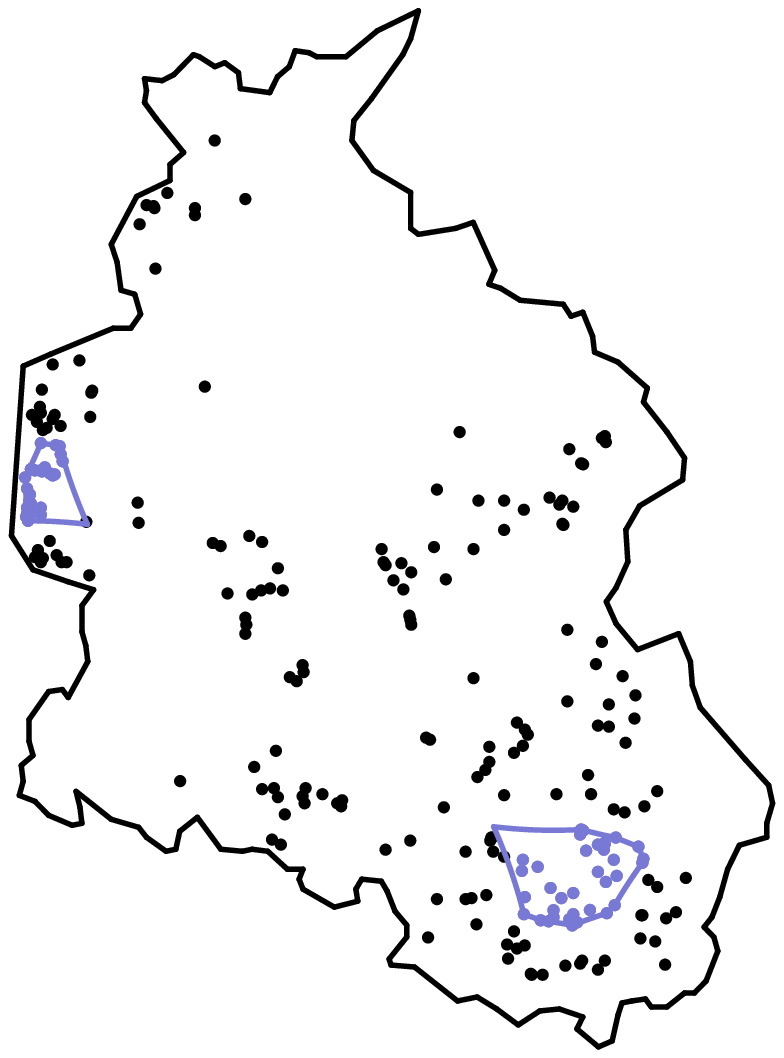}}
\put(157,-311){\includegraphics[height=12cm, width=.7\textwidth]{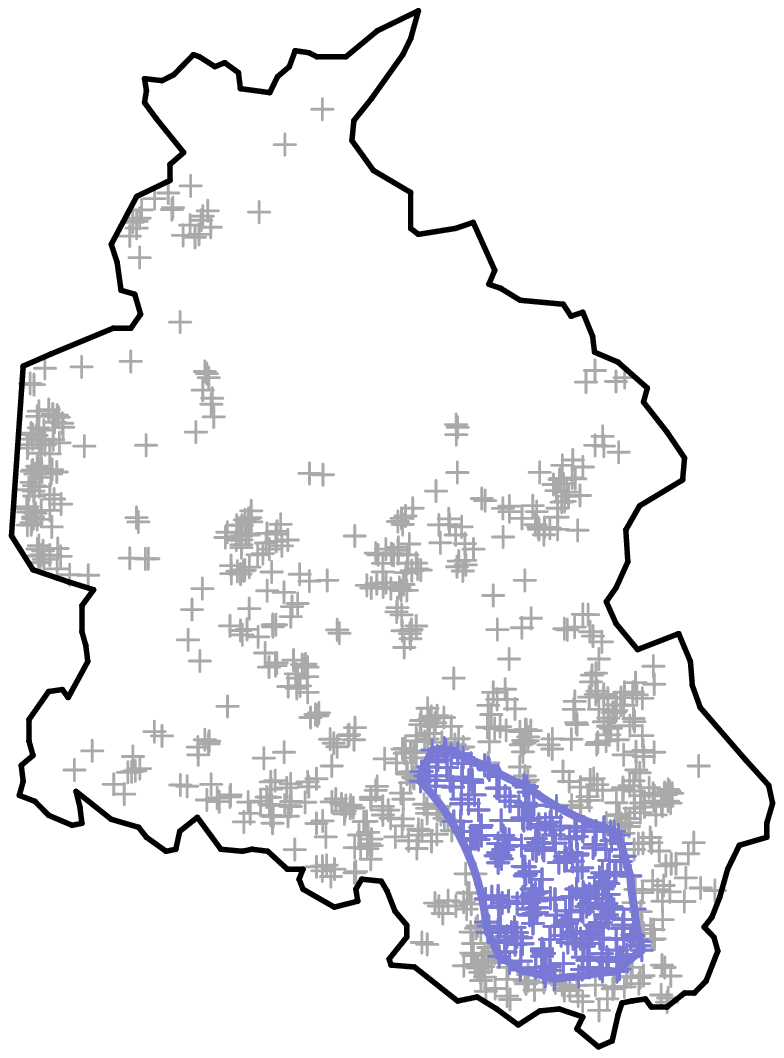}}
}\end{picture}
\vspace{10.cm}\caption{In the first column, estimated level sets for the sample of 322 cases diagnosed of leukaemia on the North West of England with $\tau=0.7$ (top) and $\tau=0.75$ (bottom). In the second column, estimated level sets for the sample of 988 controls of leukaemia on the North West of England with $\tau=0.7$ (top) and $\tau=0.75$ (bottom).}\label{hhhhhiuiop1}
\end{figure}

\newpage

\begin{figure}[h!]\hspace{1.1cm}\vspace{-3cm}
\begin{picture}(-85,260)
\scalebox{0.8}[0.8]{
\put(-49,-56){\includegraphics[height=12 cm, width=.7\textwidth]{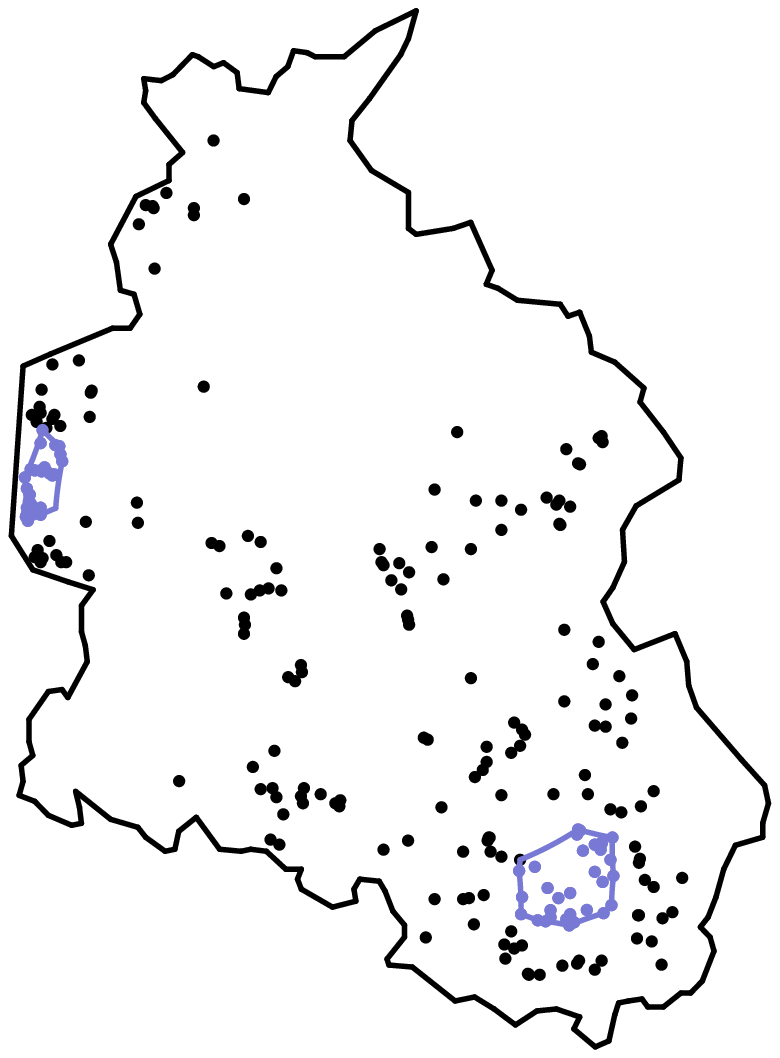}}
\put(157,-56){\includegraphics[height=12cm, width=.7\textwidth]{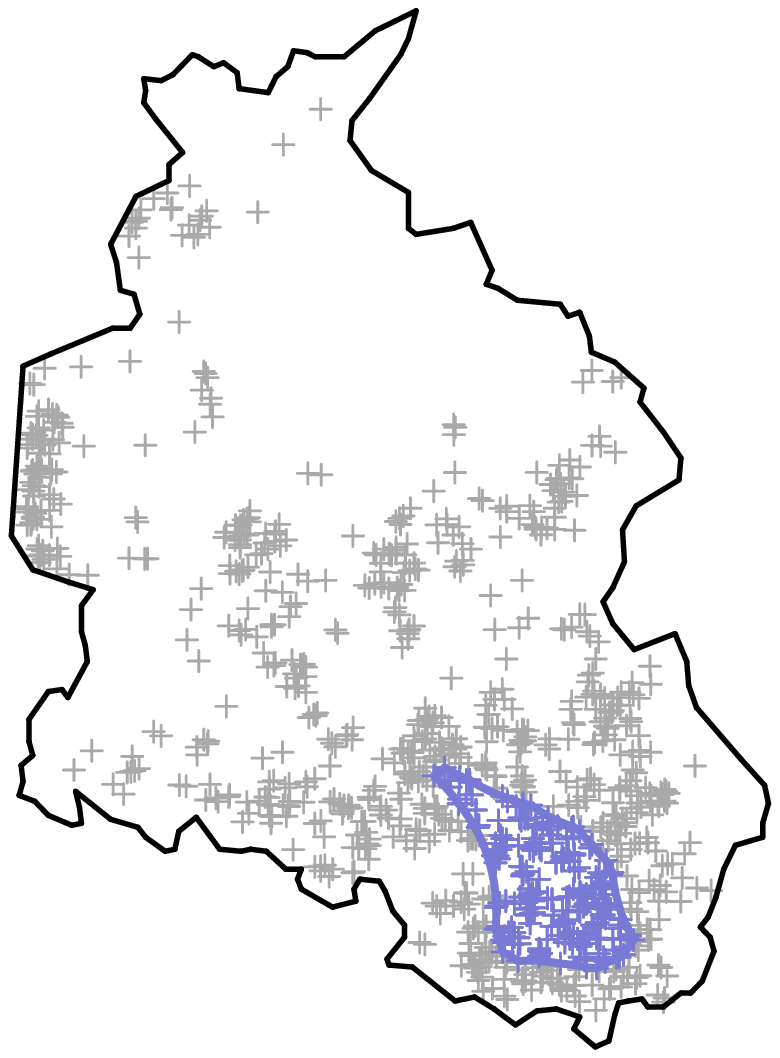}}
\put(-49,-311){\includegraphics[height=12 cm, width=.7\textwidth]{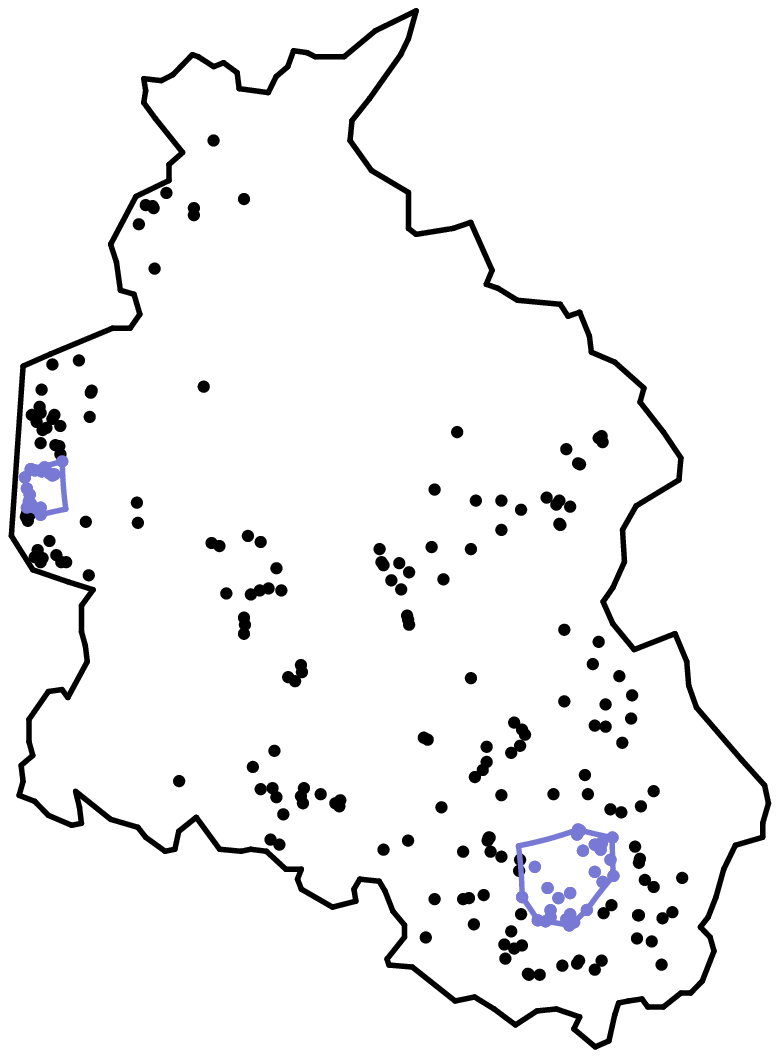}}
\put(157,-311){\includegraphics[height=12cm, width=.7\textwidth]{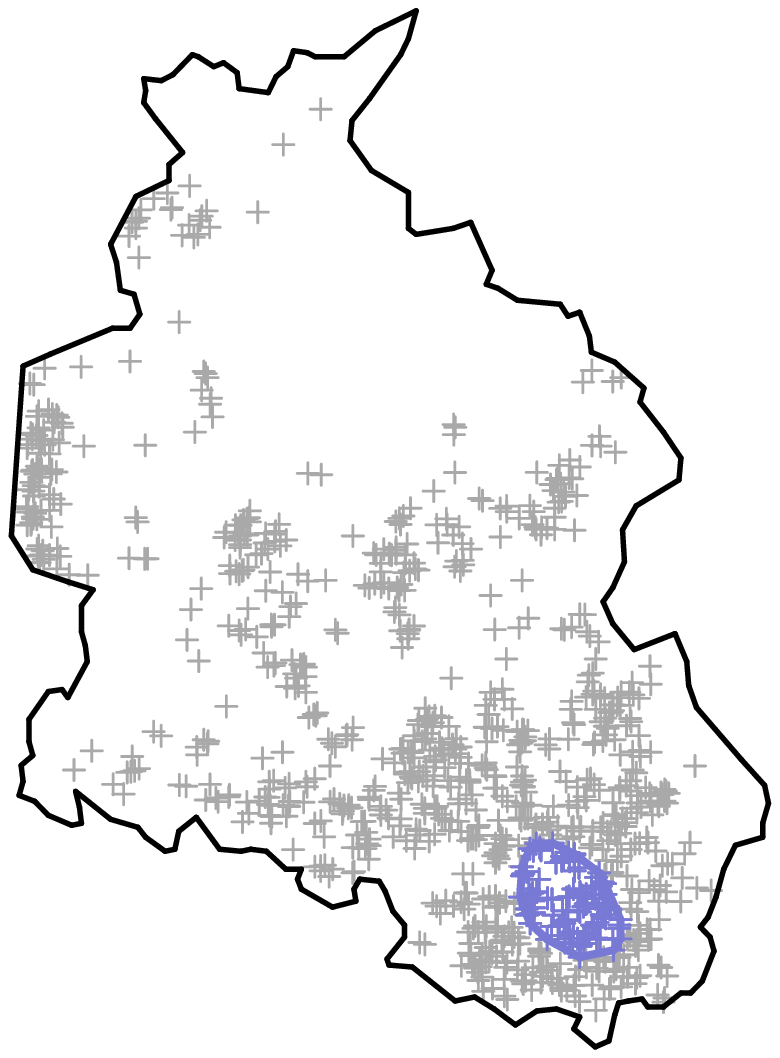}}
}\end{picture}
\vspace{10.cm}\caption{In the first column, estimated level sets for the sample of 322 cases diagnosed of leukaemia on the North West of England with $\tau=0.8$ (top) and $\tau=0.85$ (bottom). In the second column, estimated level sets for the sample of 988 controls of leukaemia on the North West of England with $\tau=0.8$ (top) and $\tau=0.85$ (bottom).}\label{hhhhhiuiop}
\end{figure}

\newpage

\begin{figure}[h!]\hspace{1.1cm}\vspace{-3cm}
\begin{picture}(-85,260)
\scalebox{0.8}[0.8]{
\put(-49,-56){\includegraphics[height=12 cm, width=.7\textwidth]{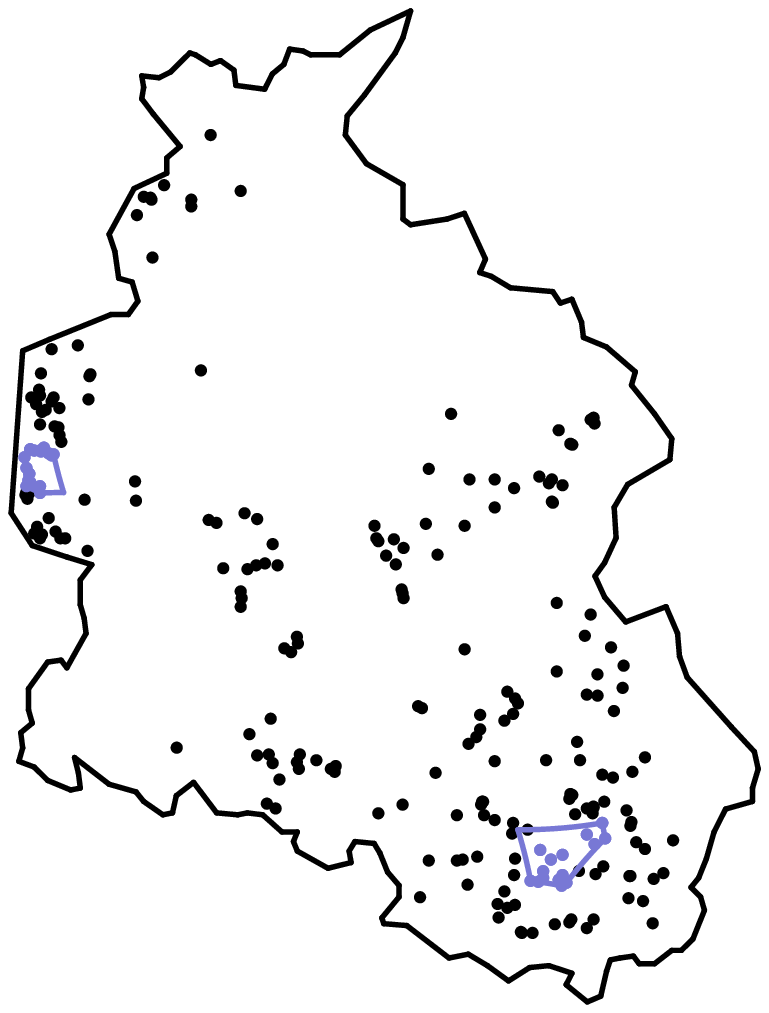}}
\put(157,-56){\includegraphics[height=12cm, width=.7\textwidth]{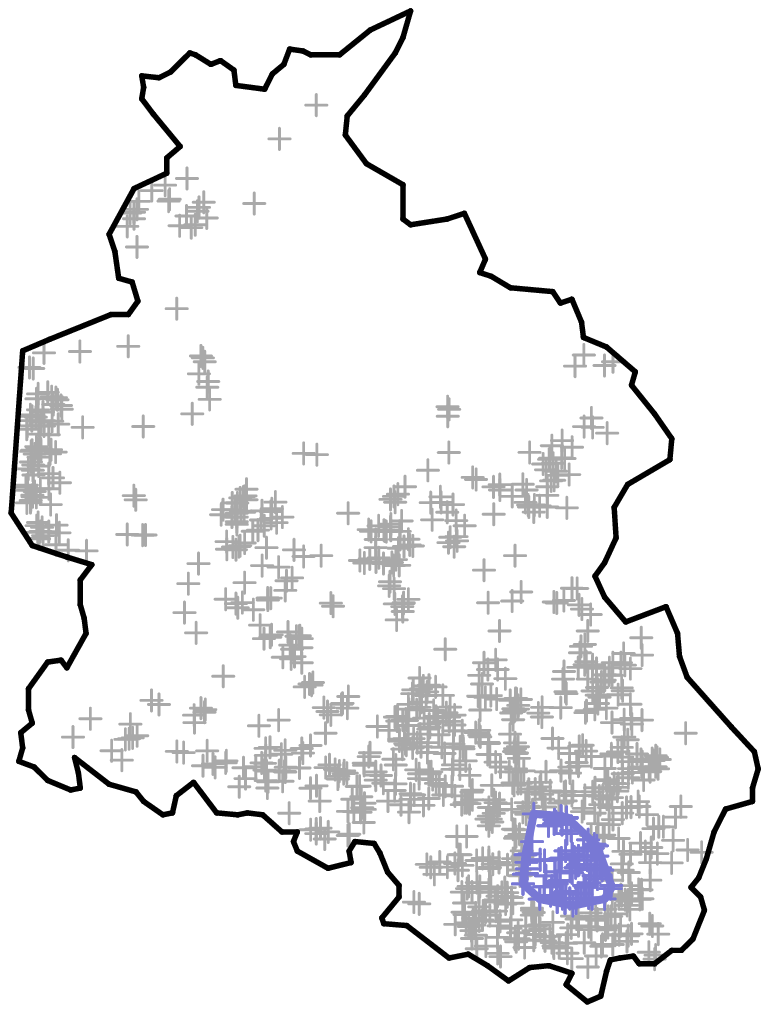}}
\put(-49,-311){\includegraphics[height=12 cm, width=.7\textwidth]{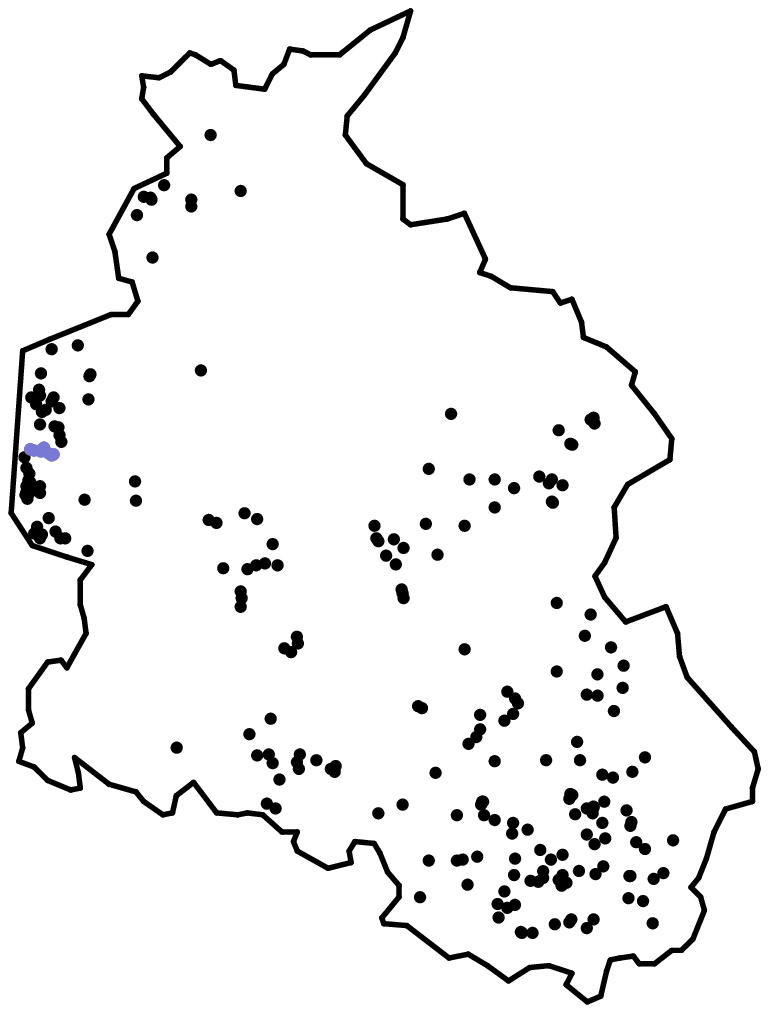}}
\put(157,-311){\includegraphics[height=12cm, width=.7\textwidth]{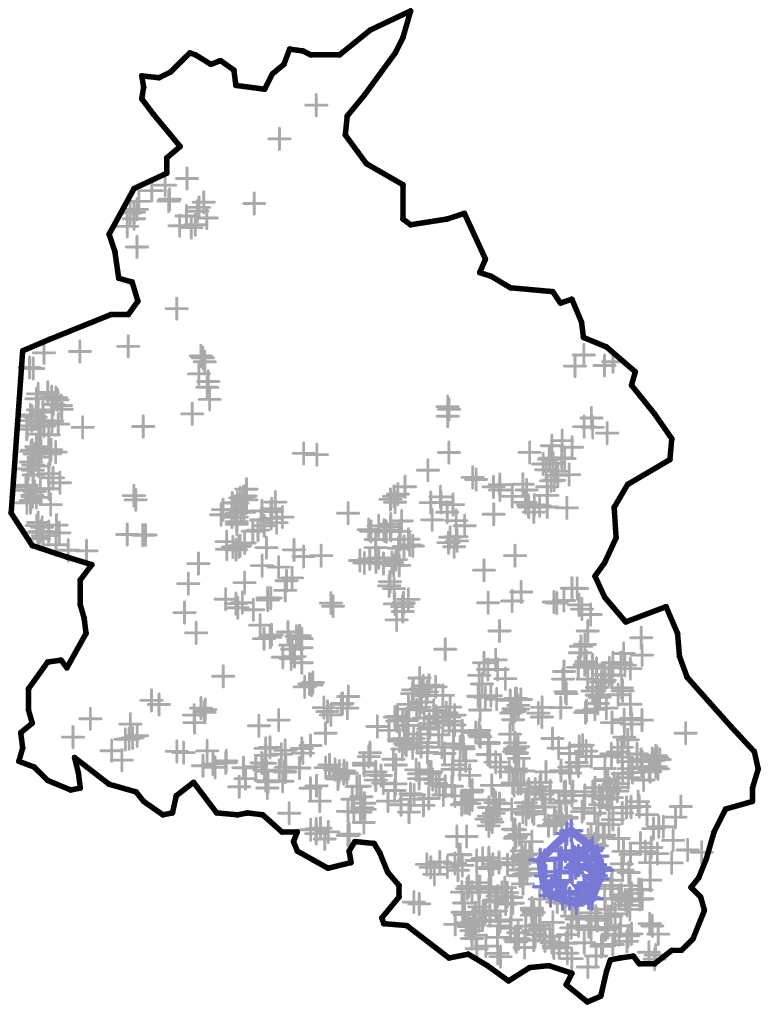}}
\pscircle[fillcolor=white,linecolor=black,linewidth=0.2mm ](1.,-4.05){.15}
 \psline[fillcolor=white,linecolor=black,linewidth=0.2mm ](1.15,-3.99)(4.7,-1.8)
\pscircle[fillcolor=white,linecolor=black,linewidth=0.2mm ](5.9,-1.1){1.4}
\put(120,-90){\includegraphics[height=3.7cm, width=0.23\textwidth]{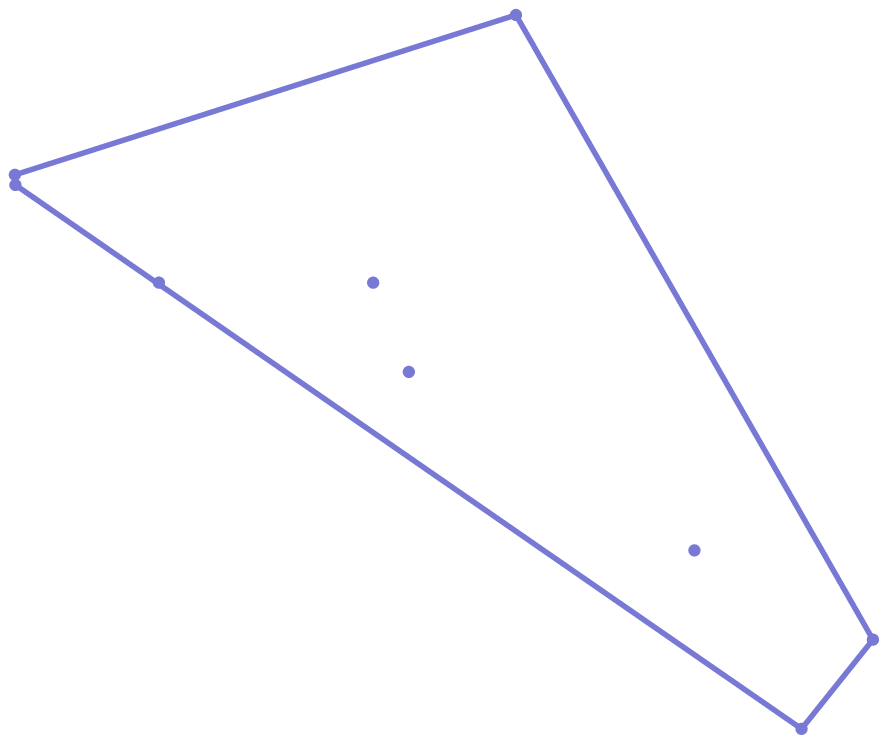}}
}\end{picture}
\vspace{10.cm}\caption{In the first column, estimated level sets for the sample of 322 cases diagnosed of leukaemia on the North West of England with $\tau=0.9$ (top) and $\tau=0.95$ (bottom). In the second column, estimated level sets for the sample of 988 controls of leukaemia on the North West of England with $\tau=0.9$ (top) and $\tau=0.95$ (bottom).}\label{hhhhhiuiop7}
\end{figure}
\newpage $\vspace{-.25cm} $\\the sample of controls are convex for the three largest values of $\tau$ considered, $0.85$, $0.9$ and $0.95$. In addition, $\hat{\tau}^+$ and $\hat{\tau}^-$ are usually different. Only for the controls with $\tau=0.8$, it is verified that $\hat{\tau}^+=\hat{\tau}^-=0.8$. The performance of the estimations for the parameter $k$ is not too clear. In particular, for the cases, $\hat{k}$ takes the values $1$, $3$ and $5$. However, it is always equal to $1$ for the sample of controls.

The resulting level sets are showed for the two samples on North West of England in Figures \ref{hhhhhiuiop1}, \ref{hhhhhiuiop} and \ref{hhhhhiuiop7} for different values of the probability content $\tau$. It is possible to observe an excess of case intensity over that of population. Greater Manchester is a metropolitan county in North West England that encompasses one of the largest metropolitan areas in the United Kingdom. However, Lancashire is a non-metropolitan county that emerged during the Industrial Revolution as a major commercial and industrial region. Therefore, there is evidence of clustering and the leukaemia cases could be related to environmental and industrial factors. Similar studies have been already considered in literature. For instance, see Cuzick and Edwards (1990) for the childhood leukaemia in Humberside, Diggle et al. (1990) for the lung and larynx cancers in Chorley-South Ribble, Kelsall and Diggle (1998) for the lung and stomach cancer in Walsall, Kelsall and Wakefield (2000) for the colorectal cancer in Birminghan or Henderson et al. (2002) for acute myeloid leukemia in North West of England.
\newpage
\section{Proofs}\label{prprpr}In this section the proofs of the stated propositions and theorems are presented.\vspace{.15cm}\\
\emph{Proof of Proposition \ref{mostramaisenGlambda}.}\vspace{.15cm}\\
First, we will prove that,
 $$\mathbb{P}(\mathcal{X}_n^+(t)\subset G(t),\mbox{ eventually} )=1.$$For this, it is enough to prove
\begin{equation}\label{jo}
    \mathbb{P}\left(\sup_{z\in G(t)^c} f_n(z)<t+M\left(\frac{\log{n}}{n}\right)^{p/(d+2p)} ,\mbox{ eventually}\right)=1.
\end{equation}Then, let $z\in G(t)^c$ and $C$ be the compact set defined in Proposition \ref{enPruebaTeorema3Walther}. Two cases are considered: $z\in C$ or $z\in C^c$.
\begin{enumerate}
\item Let $z\in C^c$. Since $z\in G(t)^c$ then $z\notin G(l)$ because $G(l)\setminus \interior(G(u))\subset C$. Therefore, according to Proposition \ref{enPruebaTeorema3Walther3}, with probability one and for $n$ large enough,
$$f_n(z)\leq \sup_{y\in G(l)^c\cap C^c}f_n(y)<l-\frac{w}{2}<l,$$where $w$ denotes a positive constant. Therefore,
$$\mathbb{P}\left(\sup_{z\in G(t)^c\cap C^c}f_n(z)<l, \mbox{ eventually}\right)=1,$$and since $l<t+D_n$ for all $t\in (l,u)$,
$$\mathbb{P}\left(\sup_{z\in G(t)^c\cap C^c}f_n(z)<t+D_n, \mbox{ eventually}\right)=1.$$
\item Let $z\in C$. According to Proposition \ref{enPruebaTeorema3Walther}, we can guarantee that,
$$\sup_{C}|f_n-f|=O\left(  \left(  \frac{\log{n}}{n}\right)^{p/(d+2p)}\right),\mbox{ almost surely}.$$So, there exists $N>0$ such that
\begin{equation}\label{hhhhhhhhhhhhhhhhhhhhhhhh}
\sup_{C}|f_n-f|\leq N  \left(  \frac{\log{n}}{n}\right)^{p/(d+2p)},\mbox{ almost surely}.
\end{equation}Since $z\notin G(t)$ then $f(z)<t$. Taking into account (\ref{hhhhhhhhhhhhhhhhhhhhhhhh}), for $n$ large enough, it is verified that
$$f_n(z)\leq|f_n(z)-f(z)|+f(z)<|f_n(z)-f(z)|+t\leq N\left(  \frac{\log{n}}{n}\right)^{p/(d+2p)}+t.$$If $M\geq N$,
$$f_n(z) < t+M\left(  \frac{\log{n}}{n}\right)^{p/(d+2p)}=t +D_n,\mbox{ almost surely}.$$
\end{enumerate}This concludes the proof of (\ref{jo}).\\Similarly, we will prove that,
 $$\mathbb{P}(\mathcal{X}_n^-(t)\subset G(t)^c ,\mbox{ eventually})=1.$$For this, it is enough to prove
\begin{equation}\label{jo2}
    \mathbb{P}\left(\inf_{z\in G(t)} f_n(z)\geq t-M\left(\frac{\log{n}}{n}\right)^{p/(d+2p)} ,\mbox{ eventually}\right)=1.
\end{equation}Let $z\in G(t)$. Again, two cases are considered: $z\in C$ or $z\in C^c$.
\begin{enumerate}

\item Let $z\in C^c$. Then, $z\notin (G(l)\setminus \interior(G(u)))$. But $z\in G(t)\subset G(l)$. Therefore, $z\in \interior(G(u))$ and, as consequence, $f(z)>u$. According to Proposition \ref{enPruebaTeorema3Walther3}, with probability one,
$$f_n(z)\geq \inf_{y\in G(u)\cap C^c}f_n(y)>u+\frac{w}{2}>u,$$where $w$ denotes a positive constant. Therefore,
$$\mathbb{P}\left(\inf_{z\in G(t)\cap C^c}f_n(z)>u ,\mbox{ eventually}\right)=1,$$and since $t-D_n<u$ for all $t\in(l,u)$,
$$\mathbb{P}\left(\inf_{z\in G(t)\cap C^c}f_n(z)>t-D_n ,\mbox{ eventually}\right)=1.$$
\item Let $z\in C$. Since $z\in G(t)$ then $f(z)\geq t$. Taking into account (\ref{hhhhhhhhhhhhhhhhhhhhhhhh}),
$$f_n(z)\geq f(z)-|f_n(z)-f(z)| \geq t-|f_n(z)-f(z)|$$
$$\geq t-N\left(  \frac{\log{n}}{n}\right)^{p/(d+2p)},\mbox{ almost surely}.$$If $M\geq N$,
$$f_n(z) \geq t-M\left(  \frac{\log{n}}{n}\right)^{p/(d+2p)}=t+D_n,\mbox{ almost surely}.$$
\end{enumerate}This concludes the proof of (\ref{jo2}). The lemma is a straightforward consequence of (\ref{jo}) and (\ref{jo2}).\hfill $\Box$\vspace{.15cm}\\
\emph{Proof of Proposition \ref{alberto5}.}\vspace{.15cm}\\Let $\epsilon>0$. It is clear that it is enough to show the result for a value of $\epsilon$ small enough. The followings steps complete the proof:
\begin{enumerate}
\item Let $x\in G(t)$. Under (A), a ball of radius $m/k$ rolls freely in $G(t)$ and $\overline{G(t)^c}$. According to Lemma 1 in Arias-Castro and Rodr\'iguez-Casal (2014), if $\epsilon\leq m/k$,
$$\exists B_{\frac{\epsilon}{2}}(y)\subset B_{\epsilon}(x)  \mbox{ such that } B_{\frac{\epsilon}{2}}(y)\subset G(t).$$We define $B_t^x=B_{\epsilon/4}(y)$. Obviously, $B_t^x\subset G(t) $. In addition, it verifies that
$$B_t^x\subset G(t)\ominus \frac{\epsilon}{4}B_1[0]$$since, for all $z\in B_t^x$, $z+(\epsilon/4)B_1[0]\subset B_{\epsilon/2}(y)\subset G(t)$. On the other hand, considering Proposition \ref{enPruebaTeorema3Walther4} for $\epsilon$ small enough and $T=\epsilon m/8$,
$$G(t)\ominus \frac{\epsilon}{4}B_1[0] \subset G(t+T).$$Therefore, see Figure \ref{k},
$$B_t^x\subset G(t)\ominus \frac{\epsilon}{4}B_1[0]\subset G(t+T).$$
\begin{figure}[h!]\centering
\begin{pspicture}(1,0)(14,5.9)
\psccurve[showpoints=false,fillstyle=solid,fillcolor=white,linecolor=black,linewidth=0.2mm,linearc=3](5,1)(5,4)(8,3.5)(10,4.5)(10,1)
\rput(0.87,.3){\scalebox{0.9}[0.85]{\psccurve[showpoints=false,fillstyle=solid,fillcolor=white,linecolor=gray,linewidth=0.2mm,linearc=3](5,1)(5,4)(8,3.5)(10,4.5)(10,1)}}
\pscircle[ linecolor=black,linewidth=0.2mm,linearc=3,linestyle=dashed,dash=3pt 2pt](9.7,4.3){.9}
\psdots*[dotsize=2.5pt](9.7,4.3)
\rput(9.7,4.83){$\tiny{B_\epsilon(x)}$}
\rput(4.4,1){$G(t)$}
\rput(6.1,3.3){\textcolor[rgb]{0.52,0.52,0.52}{$G(t+T)$}}
 \pscircle[linearc=0.25,linecolor=gray,linewidth=0.2mm, linestyle=dashed,dash=3pt 2pt,fillstyle=crosshatch*,fillcolor=gray,hatchcolor=white,hatchwidth=1.pt,hatchsep=.4pt,hatchangle=0](9.7,3.8){.25}
\psdots[dotsize=2.pt,linecolor=gray](9.7,3.8)
\end{pspicture}
\caption{Elements in proof of Proposition \ref{alberto5}. $G(t)$ in black, $G(t+T)$ in gray, $B_\epsilon(x)$ in black and $B_t^x$ in gray.}\label{k}
\label{Figuraaa}
\end{figure}
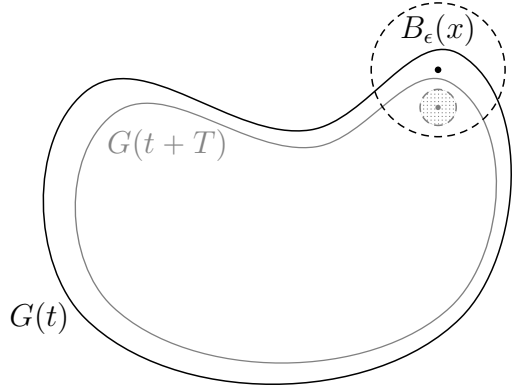

\item Let $\mathcal{F}=\{B_t^x: \mbox{ }x\in G(t)\}$. Under (A), the level set $G(t)$ is bounded since $G(u+\zeta)$ is bounded and $\overline{G(l-\zeta)}\setminus \interior (G(u+\zeta))\subset U$ where $U$ is a bounded set too. As consequence, $\overline{G(l-\zeta)}$ is bounded and, therefore, $G(t)\subset G(l)\subset\overline{G(l-\zeta)}$ too. Then, there exists a finite cover for $G(t)$ of balls of radius, for instance, $\epsilon/10$. Therefore, there exists $z_1,\cdots,z_s\in G(t)$ such that
$$G(t)\subset \bigcup_{i=1}^s B_{\frac{\epsilon}{10}}(z_i).$$Then, for all $B_t^x=B_{\epsilon/4}(y)\in \mathcal{F}$ where $y\in G(t) $,
$$\exists z_j\in \{z_1,\cdots,z_s\}\mbox{ such that }\|z_j-y\|<\frac{\epsilon}{10}.$$Next, we will prove that the ball $B_{\epsilon/10}(z_j)\subset B_t^x$. Let $z\in B_{\epsilon/10}(z_j)$,
$$\|z-y\|\leq\| z-z_j\|+\|z_j-y\| < \frac{\epsilon}{10}+ \frac{\epsilon}{10}=\frac{\epsilon}{5}<\frac{\epsilon}{4}.$$
As consequence, if a ball in $\mathcal{F}$ does not meet $\mathcal{X}_n$ then there exists a ball $ B_{\frac{\epsilon}{10}}(z_i)$ with $ z_i\in \{z_1,\cdots,z_s\}$ such that $B_{\frac{\epsilon}{10}}(z_i)\cap\mathcal{X}_n=\emptyset$. So,
\begin{equation}\label{hu245}\mathbb{P}(\exists  x\in G(t):\mathcal{X}_n\cap B_t^x=\emptyset)\leq \sum_{i=1}^s\mathbb{P}\left(\mathcal{X}_n\cap B_{\frac{\epsilon}{10}}(z_i)=\emptyset\right).\end{equation}
In addition, if $\epsilon$ is small enough then
\begin{equation}\label{hu}
f(z)>l-\zeta\mbox{ for all }z\in  B_{\frac{\epsilon}{10}}(z_i),\mbox{ }i=1,\cdots,s.
\end{equation}Let $z\in B_{\frac{\epsilon}{10}}(z_i)$ for some $i\in\{1,\cdots,s\}$. Since $z_i\in G(t)$ then $f(z_i)\geq t >l$. In addition, $f$ is continuous in $U$. Then, two cases must be considered:
\begin{enumerate}
\item If $z_i\in U$ then given $\zeta>0$, see assumption (A) for more details,
$$\exists \delta_i>0 \mbox{ such that }\forall w\in B_{\delta_i}(z_i)\mbox{ it is verified that }\|f(w)-f(z_i)\|<\zeta.$$Then,
$$\exists \delta_i>0 \mbox{ such that }\forall w\in B_{\delta_i}(z_i)\mbox{ it is verified that }f(w)> l-\zeta.$$
\item If $z_i\notin U$ then $z_i\in \interior(G(u+\zeta))$ since $z_i\in G(l)\cap U^c$. Therefore,
$$\exists \delta_i>0 \mbox{ such that } B_{\delta_i}(z_i)\subset \interior(G(u+\zeta)).$$Then, for all $w\in B_{\delta_i}(z_i)$, $f(w)>u+\zeta>l-\zeta$.
\end{enumerate}In order to guarantee (\ref{hu}), it is enough to take $\epsilon<10\min\{\delta_i:i=1,\cdots,s\}$.
\item Next, using (\ref{hu245}), we will prove that
$$\mathbb{P}(\exists  x\in G(t):\mathcal{X}_n\cap B_t^x=\emptyset,\mbox{ infinitely often})=0.$$Using the same reasoning as in the Step 2, it is enough to analyze if for each fixed $i\in\{1,\cdots,s\}$ $$\mathbb{P}\left(\mathcal{X}_n\cap B_{\frac{\epsilon}{10}}(z_i)=\emptyset,\mbox{ infinitely often}\right)=0.$$According to Borel-Cantelli's Lemmas, it is enough to show that $$\sum_{n=1}^\infty \mathbb{P}\left(\mathcal{X}_n\cap B_{\frac{\epsilon}{10}}(z_i)=\emptyset\right)<\infty.$$Since the observations are independent and identically distributed, we can write
$$\mathbb{P}\left(\mathcal{X}_n\cap B_{\frac{\epsilon}{10}}(z_i)=\emptyset\right)=\mathbb{P}\left(\forall i \in\{1,...,n\},\mbox{ } X_i\notin B_{\frac{\epsilon}{10}}(z_i)\right)$$
$$=\prod_{i=1}^n\mathbb{P}\left( X_i\notin B_{\frac{\epsilon}{10}}(z_i)\right)=\left[\mathbb{P}\left(X_1\notin B_{\frac{\epsilon}{10}}(z_i)\right)\right]^n=\left[1-\mathbb{P}\left(X_1\in B_{\frac{\epsilon}{10}}(z_i)\right)\right]^n$$
$$\leq  e^{-n\mathbb{P}\left(X_1\in B_{\frac{\epsilon}{10}}(z_i)\right)}.$$According to (\ref{hu}),
$$\mathbb{P}\left(X_1\in B_{\frac{\epsilon}{10}}(z_i) \right) =\int_{B_{\frac{\epsilon}{10}}(z_i)} f(x)\mbox{ }d\mu\geq \int_{B_{\frac{\epsilon}{10}}(z_i)} (l-\zeta)\mbox{ }d\mu=\rho>0$$and
$$\mathbb{P}\left(\mathcal{X}_n\cap B_{\frac{\epsilon}{10}}(z_i)=\emptyset\right)\leq  e^{-n\mathbb{P}\left(X_1\in B_{\frac{\epsilon}{10}}(z_i)\right)}= e^{-n \rho}.$$
Then,
$$\sum_{n=1}^\infty \mathbb{P}\left(\mathcal{X}_n\cap B_{\frac{\epsilon}{10}}(z_i)=\emptyset\right)\leq \sum_{n=1}^\infty e^{-n \rho}<\infty.$$
\item According to Step 3, with probability one, there exists $n_0$ such that for all $x\in G(t)$,
$$ \mathcal{X}_n\cap B_{t}^x\neq \emptyset,\mbox{ }\forall n\geq n_0.$$Then, there exists $n_0$ such that for all $x\in G(t)$,
$$ \exists X_{i_x}\in \mathcal{X}_n\cap B_{t}^x \subset \mathcal{X}_n\cap B_\epsilon(x),\mbox{ }\forall n\geq n_0.$$Therefore, it only remains to prove that $X_{i_x}\in \mathcal{X}_n^+(t)$. According to Proposition \ref{enPruebaTeorema3Walther}, it is possible to guarantee that
$$\sup_{C}|f_n-f| =O\left(  \left(  \frac{\log{n}}{n}\right)^{p/(d+2p)}\right),\mbox{ almost surely}$$where $C\subset U$ is under conditions of Proposition \ref{enPruebaTeorema3Walther}. Therefore, there exists $N>0$ such that, with probability one,
$$
\sup_{C}|f_n-f|\leq N  \left(  \frac{\log{n}}{n}\right)^{p/(d+2p)} .
$$Two cases are considered: $X_{i_x}\in C$ and $X_{i_x}\notin C$.
\begin{enumerate}
  \item If $X_{i_x}\in C$ and $D_n= M\left(  \frac{\log{n}}{n}\right)^{p/(d+2p)}$ with $M\geq N$ then $\lim_{n\rightarrow \infty}D_n=0$. So, fixed $T/2>0$ (see Step 1 in this proof),
    $$\exists n_1\in\mathbb{N}\mbox{ such that } D_n<T/2, \forall n\geq n_1.$$Then,
    $$|f_n(X_{i_x})-f(X_{i_x})|\leq \sup_{C}|f_n-f|\leq D_n<T/2,\mbox{ }\forall n\geq\{n_0,n_1\}.$$Therefore, since $X_{i_x}\in B_t^x\subset G(t+T)$,
    $$f_n(X_{i_x})\geq f(X_{i_x})-D_n\geq t+T-D_n>t+T-\frac{T}{2}=t+\frac{T}{2}\geq t+D_n.$$

  \item If $X_{i_x}\notin C$ then, since $X_{i_x} \in B_t^x\subset G(t+T)$, it is verified that $f(X_{i_x})\geq t+T>t\geq l$. So, $X_{i_x} \in \interior(G(l))$. Then,
   $X_{i_x}\in G(u)\cap C^c$. According to Proposition \ref{enPruebaTeorema3Walther3} for a certain $w>0$, with probability one,
  $$\exists n_2 \mbox{ such that }f_n(z)\geq u +\frac{w}{2},\mbox{ }\forall z\in G(u)\cap C^c\mbox{ and }\forall n\geq n_2.$$For $D_n$ fixed previously, $\lim_{n\rightarrow \infty}D_n=0$. So, given $w/2>0$,$$\exists n_3\in\mathbb{N}\mbox{ such that } D_n<w/2, \forall n\geq n_3.$$Therefore, since $t\leq u$,
  \[f_n(X_{i_x})\geq u+\frac{w}{2}\geq t + D_n,  \mbox{ }\forall n\geq\max\{n_0,n_2,n_3\}.\]\hfill $\Box$
\end{enumerate}
\end{enumerate}\vspace{.15cm}
\emph{Proof of Corollary \ref{xeneralizacionsoporte}.}\vspace{.15cm}\\
The proof is a straightforward consequence of Proposition \ref{mostramaisenGlambda} and Proposition \ref{alberto5}.\hfill $\Box$\vspace{.15cm}\\
\emph{Proof of Theorem \ref{rmaiorr02}.}\vspace{.15cm}\\Some auxiliary results are necessary. Lemma \ref{mostmenos} is a useful and auxiliary tool for guaranteeing the consistency for the estimator established in Definition \ref{jejejeje}. It ensures the existence of points in $\mathcal{X}_n^-(t)$ inside any open ball contained in $G(t)^c$. A straightforward consequence is that, with probability one and for $n$ large enough, $\mathcal{X}_n^-(t)$ is not empty.

\begin{lemma}\label{mostmenos}Let $G(t)$ be a compact, nonempty and nonconvex. Under assumptions (A), (D) and (K), let $\mathcal{X}_n$ be a random sample generated from a distribution with density function $f$ and $\mathcal{X}_n^-(t)$ established in Definition \ref{jejejeje}. Let $B_\epsilon(x)$ such that $B_\epsilon(x)\subset \interior (G(l-\zeta))$ and $B_\epsilon(x)\cap G(t)=\emptyset$. Then,
$$\mathbb{P}\left(\mathcal{X}_n^-(t)\cap B_\epsilon(x)\neq \emptyset,\mbox{ eventually} \right)=1.$$

\end{lemma}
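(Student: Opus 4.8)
\emph{Proof of Lemma \ref{mostmenos} (sketch).}
The plan is to exhibit one fixed ball $B'=B_{\epsilon/2}(x)\subset B_\epsilon(x)$ enjoying two properties: it has strictly positive probability content, so that some sample point eventually falls in it; and, with probability one, it is eventually contained in $\{f_n<t-D_n\}$. Any such sample point then belongs to $\mathcal{X}_n^-(t)\cap B_\epsilon(x)$ for all large $n$, which is exactly the claim. First I would pin down the geometry. Since $B_\epsilon(x)\cap G(t)=\emptyset$ and $G(u+\zeta)\subset G(u)\subset G(t)$ (because $t<u<u+\zeta$), the ball $B_\epsilon(x)$ is disjoint from $G(u+\zeta)$, hence from $\interior(G(u+\zeta))$; together with $B_\epsilon(x)\subset\interior(G(l-\zeta))\subset\overline{G(l-\zeta)}$ this yields $B_\epsilon(x)\subset\overline{G(l-\zeta)}\setminus\interior(G(u+\zeta))\subset U$, so $f$ is continuous on $B_\epsilon(x)$ by (A). As $\overline{B'}$ is a compact subset of $B_\epsilon(x)\subset G(t)^c$, one has $f<t$ on $\overline{B'}$ and therefore $\delta:=t-\max_{\overline{B'}}f>0$.

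Next I would control $f_n$ on $B'$, essentially rerunning the $C/C^c$ splitting from the proof of Proposition \ref{mostramaisenGlambda}. Let $C$ be the compact set of Proposition \ref{enPruebaTeorema3Walther}. For $z\in B'\cap C$, that proposition provides $N>0$ with $\sup_C|f_n-f|\le N(\log n/n)^{p/(d+2p)}$ for $n$ large, almost surely, so $f_n(z)\le f(z)+N(\log n/n)^{p/(d+2p)}<t-\delta+N(\log n/n)^{p/(d+2p)}$, which is $<t-D_n$ once $(N+M)(\log n/n)^{p/(d+2p)}<\delta$, i.e. for all large $n$, since $D_n\to0$. For $z\in B'\cap C^c$ one has $z\notin G(l)$: otherwise $z\in G(l)\cap C^c\subset\interior(G(u))\subset G(t)$, contradicting $B_\epsilon(x)\cap G(t)=\emptyset$. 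Hence $z\in G(l)^c\cap C^c$, and Proposition \ref{enPruebaTeorema3Walther3} gives, for some $w>0$, $f_n(z)<l-w/2<l<t-D_n$ for $n$ large, almost surely. Thus, with probability one, there is $n_1$ with $f_n(z)<t-D_n$ for every $z\in B'$ and every $n\ge n_1$.

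Finally I would put a sample point into $B'$. Since $B'\subset\interior(G(l-\zeta))$ we have $f\ge l-\zeta>0$ on $B'$ (exactly as in the proof of Proposition \ref{alberto5}), so $\rho:=\mathbb{P}(X_1\in B')>0$ and $\sum_n\mathbb{P}(\mathcal{X}_n\cap B'=\emptyset)=\sum_n(1-\rho)^n<\infty$; the Borel--Cantelli lemma, together with $\mathcal{X}_n\subset\mathcal{X}_{n+1}$, then yields $n_0$ with $\mathcal{X}_n\cap B'\ne\emptyset$ for all $n\ge n_0$, almost surely. Combining the last two steps, almost surely every $n\ge\max\{n_0,n_1\}$ admits $X_k\in\mathcal{X}_n\cap B'$ with $f_n(X_k)<t-D_n$, that is $X_k\in\mathcal{X}_n^-(t)\cap B_\epsilon(x)$, which proves the lemma. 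The steps I expect to need the most care are the placement of $B'$ in the good region (this is where the two hypotheses $B_\epsilon(x)\subset\interior(G(l-\zeta))$ and $B_\epsilon(x)\cap G(t)=\emptyset$ genuinely enter) and the uniform estimate for $f_n$ over $\overline{B'}$; but both reduce to arguments already carried out for Propositions \ref{mostramaisenGlambda} and \ref{alberto5}, so no essentially new difficulty arises. \hfill$\Box$
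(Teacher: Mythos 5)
Your proof is correct and follows essentially the same route as the paper's: occupy a sub-ball of $B_\epsilon(x)$ via Borel--Cantelli (using $f\geq l-\zeta>0$ there), then split into the $C$ and $C^c$ cases and invoke Propositions \ref{enPruebaTeorema3Walther} and \ref{enPruebaTeorema3Walther3} to conclude $f_n<t-D_n$. The only (harmless) differences are cosmetic: you fix $B'=B_{\epsilon/2}(x)$ and get $f\leq t-\delta$ on $\overline{B'}$ by compactness where the paper shrinks the radius via pointwise continuity at $x$, and you prove the bound $f_n<t-D_n$ uniformly over $B'$ before placing a sample point, whereas the paper checks it only at the sample point obtained.
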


\begin{proof}
 Since $x\in G(t)^c \cap \interior(G(l-\zeta))$, it is verified that $l-\zeta<f(x)<t$. The following steps complete the proof:
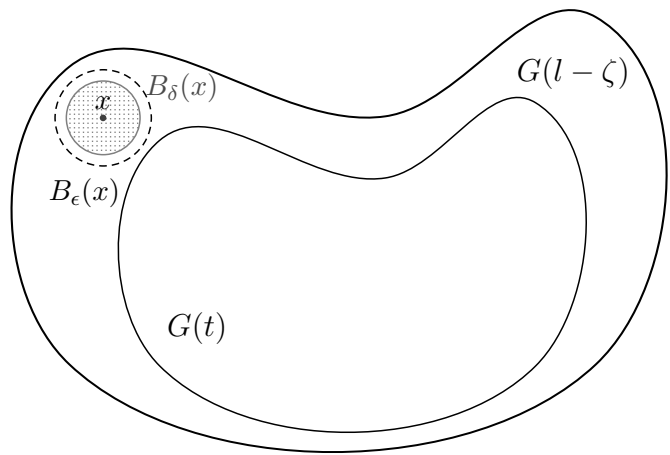
\begin{figure}[h!]\centering
\begin{pspicture}(1,-0.5)(14,7)
\rput(-3.2,-.3){\scalebox{1.4}[1.32]{\psccurve[showpoints=false,fillstyle=solid,fillcolor=white,linecolor=black,linewidth=0.2mm,linearc=3](5,1)(5,4)(8,3.5)(10,4.5)(10,1)}}
\psccurve[showpoints=false,fillstyle=solid,fillcolor=white,linecolor=black,linewidth=0.2mm,linearc=3](5,1)(5,4)(8,3.5)(10,4.5)(10,1)
\rput(5.5,1.5){$G(t)$}
\rput(10.5,4.9){$G(l-\zeta)$}
\pscircle[ linecolor=black,linewidth=0.2mm,linearc=3,linestyle=dashed,dash=3pt 2pt](4.25,4.3){.65}
\pscircle[linearc=0.25,linecolor=gray,linewidth=0.2mm,linestyle=solid,dash=3pt 2pt,fillstyle=crosshatch*,fillcolor=gray,hatchcolor=white,hatchwidth=1.2pt,hatchsep=.5pt,hatchangle=0](4.25,4.3){.5}
\psdots*[dotsize=2.5pt,linecolor=darkgray](4.25,4.3)
\rput(4.25,4.5){\small{$x$}}
\rput(4.,3.3){\small{$B_\epsilon(x)$}}
\rput(5.3,4.7){\small{\textcolor[rgb]{0.29,0.29,0.29}{$B_\delta(x)$}}}
\end{pspicture}
\caption{Elements of Lemma \ref{mostmenos}. $B_\delta(x)\subset B_\epsilon(x)\subset \interior(G(l-\zeta))\cap G(t)^c$.}
\label{Figuraaa7}
\end{figure}
\begin{enumerate}
  \item  Under (A), $f$ is continuous in $x$. Therefore, given $K=\frac{t-f(x)}{2}>0$,
 $$\exists \delta_1>0\mbox{ such that } \forall y\in B_{\delta_1}(x) \mbox{ it is verified that }\|f(x)-f(y)\|<K.$$Since $f(x)=t-2K$,
     $$ \forall y\in B_{\delta_1}(x) \mbox{ it is verified that }f(y)< t-K.$$In addition, $B_\epsilon(x)\subset \interior(G(t-\zeta))$. Therefore,
      $$  \forall y\in B_{\epsilon}(x) \mbox{ it is verified that }f(y)> t-\zeta>0.$$If $\delta=\min\{\delta_1,\epsilon\}$ then it is verified that $l-\zeta<f(y)<t-K$ for all $y\in B_\delta(x)\subset B_\epsilon(x)$. See Figure \ref{Figuraaa7} for more details.
  \item Next, we will prove that, with probability one and for $n$ large enough, there exists $X_{i_x}\in \mathcal{X}_n\cap B_{ \delta }(x) $. That is, we will prove that
  $\mathbb{P}(\mathcal{X}_n\cap B_{ \delta }(x)\neq\emptyset, \mbox{eventually})=1$. According to the Borel-Cantelli's Lemmas, it is enough to prove that $\sum_{n=1}^\infty \mathbb{P}(\mathcal{X}_n\cap B_{ \delta }(x)=\emptyset)<\infty$. Since the observations are independent and identically distributed, we can write
$$\mathbb{P}(\mathcal{X}_n\cap B_{ \delta }(x)=\emptyset)=\mathbb{P}(\forall i \in\{1,...,n\},\mbox{ } X_i\notin B_{ \delta }(x))$$
$$=\prod_{i=1}^n\mathbb{P}(X_i\notin B_{ \delta }(x))=\left[\mathbb{P}(X_1\notin B_{ \delta }(x))\right]^n=\left[1-\mathbb{P}(X_1\in B_{ \delta }(x))\right]^n$$
$$\leq  e^{-n\mathbb{P}(X_1\in B_{ \delta }(x))}.$$According to the previous step, $\forall y\in B_{ \delta }(x) $ it is verified that $f(y)>t-\zeta>0$. Therefore,
$$\mathbb{P}(X_1\in B_{ \delta }(x))\mbox{ }=\int_{B_{ \delta }(x)} f(x)\mbox{ }d\mu\geq \int_{B_{ \delta }(x)} l-\zeta\mbox{ }d\mu$$
$$=(l-\zeta)\mu(B_{ \delta }(x)).$$So,
$$\mathbb{P}(\mathcal{X}_n\cap B_{ \delta }(x)=\emptyset)\leq  e^{-n\mathbb{P}(X_1\in B_{ \delta }(x))}\leq e^{-n(l-\zeta)\mu(B_{ \delta }(x))}>0.$$
Then,
$$\sum_{n=1}^\infty \mathbb{P}(\mathcal{X}_n\cap B_{ \delta }(x)=\emptyset)\leq \sum_{n=1}^\infty e^{-n (l-\zeta) \mu(B_{ \delta }(x))}<\infty.$$
In addition, for all $X_{i_x}\in \mathcal{X}_n\cap B_{ \delta }(x) $ it is satisfied that $f(X_{i_x})<t-K$ con $K>0$ and $f(X_{i_x})>l-\zeta$ (see Step 1 of this proof). It remains to show that $X_{i_x}\in \mathcal{X}_n^-(t)$.

\item According to the previous step, with probability one, there exists $n_0$ such that
$$ \mathcal{X}_n\cap B_{\delta}(x)\neq \emptyset,\mbox{ }\forall n\geq n_0.$$According to Proposition \ref{enPruebaTeorema3Walther},
$$\sup_{C}|f_n-f|=O\left(  \left(  \frac{\log{n}}{n}\right)^{p/(d+2p)}\right),$$ where $C\subset U$ is under conditions of Proposition \ref{enPruebaTeorema3Walther}. Therefore, with probability one and for $n$ large enough,
$$
\exists N>0 \mbox{ such that }\sup_{C}|f_n-f|\leq N  \left(  \frac{\log{n}}{n}\right)^{p/(d+2p)} .
$$Two situations are considered: $X_{i_x}\in C$ and $X_{i_x}\notin C$.

\begin{enumerate}
  \item If $X_{i_x}\in C$ and $D_n=M  \left(  \frac{\log{n}}{n}\right)^{p/(d+2p)}$ with $M\geq N$ then $\lim_{n\rightarrow \infty}D_n=0$. So, fixed $K/2>0$ (see Step 1 in this proof),
    $$\exists n_1\in\mathbb{N}\mbox{ such that } D_n<K/2, \forall n\geq n_1.$$Then, with probability one,
    $$|f_n(X_{i_x})-f(X_{i_x})|\leq \sup_{C}|f_n-f|\leq D_n<K/2,\mbox{ }\forall n\geq\max\{n_0,n_1\}.$$Therefore, for all $ n\geq\max\{n_0,n_1\}$,
   $$f_n(X_{i_x})\leq f(X_{i_x})+D_n<t-K+D_n<t-K+\frac{K}{2}=t-\frac{K}{2}<t-D_n.$$

  \item If $X_{i_x}\notin C$ then, since $f(x)<t-K<t\leq u$, it is verified that $x \in G(u)^c$. Without losing generality, we can assume that $B_{ \delta }(x)\subset G(u)^c$ since $G(u)^c$ is open and $x$ is a interior point. In another case, it is enough to reduce the radius of the ball.
    So, $X_{i_x}\in G(l)^c\cap C^c$. According to Proposition \ref{enPruebaTeorema3Walther3} for some $w>0$, with probability one,
  $$\exists n_1 \mbox{ such that }f_n(z)\leq l -\frac{w}{2},\mbox{ }\forall z\in G(l)^c\cap C^c\mbox{ and }\forall n\geq n_1.$$For $D_n$ previously fixed, $\lim_{n\rightarrow \infty}D_n=0$. Therefore, fixed $w/2>0$,$$\exists n_2\in\mathbb{N}\mbox{ such that } D_n<w/2, \forall n\geq n_2.$$Therefore, since $l\leq t$ and $X_{i_x}\in G(l)^c\cap C^c$,
\[f_n(X_{i_x})\leq l -\frac{w}{2}\leq t -D_n,  \mbox{ }\forall n\geq\max\{n_0,n_1,n_2\}.\qedhere\]

\end{enumerate}
\end{enumerate}

\end{proof}

Lemma \ref{rooooj} proves that, with probability one and for $n$ large enough, the estimator $\hat{r}_0(t)$ is greater than or equal to $r_0(t)$.

\begin{lemma}\label{rooooj}Let $G(t)$ be a compact, nonempty and nonconvex level set. Under assumptions (A), (D) and (K), let $\mathcal{X}_n$ be a random sample generated from a distribution with density function $f$, $r_0(t)$ and $\hat{r}_0(t)$ established in Definitions \ref{r_0_t} and \ref{jejejeje}, respectively. Then,
$$\mathbb{P}(\hat{r}_0(t)\geq r_0(t),\mbox{ eventually})=1.$$\end{lemma}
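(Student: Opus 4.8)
The plan is to deduce the lemma from Proposition \ref{mostramaisenGlambda} together with two elementary geometric facts: the monotonicity of the $r$-convex hull in its set argument, and the nesting property of $r$-convexity (a set that is $r'$-convex is $\gamma$-convex for every $\gamma\le r'$, as recalled in Section \ref{fg}).

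First I would invoke Proposition \ref{mostramaisenGlambda}: almost surely there is a finite (random) index $n_0$ such that $\mathcal{X}_n^+(t)\subset G(t)$ and $\mathcal{X}_n^-(t)\subset G(t)^c$ for all $n\ge n_0$. The essential point, which I would emphasize, is that $n_0$ does \emph{not} depend on any radius. Next, fix an arbitrary $\gamma$ with $0<\gamma<r_0(t)$. Since $r_0(t)$ is defined in (\ref{estimadorr0levelsetu}) as a supremum, there exists $\gamma'$ with $\gamma<\gamma'\le r_0(t)$ and $C_{\gamma'}(G(t))=G(t)$; because $G(t)$ is then $\gamma'$-convex it is also $\gamma$-convex, i.e. $C_\gamma(G(t))=G(t)$. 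Using that $C_\gamma$ is monotone with respect to set inclusion (if $A\subset B$ then the family of balls disjoint from $B$ is contained in that of balls disjoint from $A$, so $C_\gamma(A)\subset C_\gamma(B)$), we obtain for every $n\ge n_0$
$$C_\gamma(\mathcal{X}_n^+(t))\subset C_\gamma(G(t))=G(t).$$
Combining this with $\mathcal{X}_n^-(t)\subset G(t)^c$ gives $C_\gamma(\mathcal{X}_n^+(t))\cap\mathcal{X}_n^-(t)=\emptyset$, so $\gamma$ lies in the set whose supremum is $\hat{r}_0(t)$ in (\ref{estimadorr0hat}). Taking the supremum over all $\gamma<r_0(t)$ yields $\hat{r}_0(t)\ge r_0(t)$ for every $n\ge n_0$, which is exactly the claimed statement. (If one wants the objects to be non-degenerate, Proposition \ref{alberto5} guarantees $\mathcal{X}_n^+(t)\neq\emptyset$ eventually; in any case $C_\gamma(\emptyset)=\emptyset$, so the inclusion argument is trivially valid when a subsample is empty.)

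The main — and essentially only — subtlety is the order of the two limiting operations: we must take the supremum over $\gamma<r_0(t)$ \emph{at a fixed} $n$, which is legitimate precisely because the threshold $n_0$ supplied by Proposition \ref{mostramaisenGlambda} is uniform in $\gamma$. A minor accompanying point is that one should work with $\gamma$ strictly below $r_0(t)$, thereby bypassing the endpoint $\gamma=r_0(t)$, for which the identity $C_{r_0(t)}(G(t))=G(t)$ is not needed and not asserted.
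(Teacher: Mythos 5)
Your proof is correct and follows essentially the same route as the paper: Proposition \ref{mostramaisenGlambda} supplies the inclusions $\mathcal{X}_n^+(t)\subset G(t)$ and $\mathcal{X}_n^-(t)\subset G(t)^c$ eventually, and monotonicity of $C_\gamma$ together with the $\gamma$-convexity of $G(t)$ yields $C_\gamma(\mathcal{X}_n^+(t))\cap\mathcal{X}_n^-(t)=\emptyset$, hence the bound on the supremum. The only (harmless) difference is that the paper applies the argument directly at $\gamma=r_0(t)$, implicitly using that the supremum in Definition \ref{r_0_t} is attained, whereas you work with $\gamma<r_0(t)$ and pass to the supremum, correctly noting that this is legitimate because the threshold $n_0$ from Proposition \ref{mostramaisenGlambda} is uniform in $\gamma$.
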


\begin{proof}According to Proposition \ref{mostramaisenGlambda},
$$\exists n_1\in\mathbb{N}\mbox{ such that }\mathcal{X}_n^+(t)\subset G(t) \mbox{ and }\mathcal{X}_n^-(t)\subset G(t)^c,\mbox{ }\forall n\geq n_1.$$Since $G(t)$ is $r_0(t)-$convex, it is verified that
$$C_{r_0(t)}(\mathcal{X}_n^+(t))\subset C_{r_0(t)}(G(t))=G(t).$$Therefore, since $\mathcal{X}_n^-(t)\subset G(t)^c$,
\[\hat{r}_0(t)= \sup\{\gamma>0: C_\gamma(\mathcal{X}_n^+(t))\cap \mathcal{X}_n^-(t)=\emptyset\} \geq r_0(t),\mbox{ }\forall n\geq n_1.\qedhere\]
\qedhere\end{proof}

Lemma \ref{noais} guarantees a reasonable topological behaviour of sets under rolling freely condition.

\begin{lemma}\label{noais}Let $A\subset \mathbb{R}^{d}$ be a nonempty and closed set. If a ball of radius $\lambda$ rolls freely in $A$ then
$$\interior(\overline{A^c})=A^c\mbox{ and }\partial A=\partial \overline{A^c}.\vspace{1.4mm}$$
\end{lemma}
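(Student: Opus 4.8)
The statement has two parts: (i) $\interior(\overline{A^c}) = A^c$ and (ii) $\partial A = \partial\overline{A^c}$. Since $A$ is closed, $A^c$ is open, so $A^c\subset\interior(\overline{A^c})$ is automatic; the content of (i) is the reverse inclusion $\interior(\overline{A^c})\subset A^c$. Equivalently, I must show that no interior point of $A$ lies in $\overline{A^c}$, i.e. $\interior(A)\cap\overline{A^c}=\emptyset$. The plan is to argue by contradiction using the rolling ball: suppose $x\in\interior(A)$ but $x\in\overline{A^c}$, so there is a sequence $x_k\to x$ with $x_k\notin A$. Pick a boundary point $a$ of $A$ on a segment (or path) from $x$ into $A^c$; more cleanly, since $x\in\interior(A)$ and $x_k\notin A$, the segment $[x,x_k]$ meets $\partial A$ at some point $a_k$, and one can extract $a\in\partial A\cap\overline{B_\delta(x)}$ for $\delta$ as small as desired — in particular we may take $a$ as close to $x$ as we like while still having $a\in\partial A$.

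**Using the rolling ball at the nearby boundary point.** Because a ball of radius $\lambda$ rolls freely in $A$, at the boundary point $a$ there is a closed ball $B_\lambda[y]\subset A$ with $a\in B_\lambda[y]=\partial B_\lambda[y]\cap\partial A$ contact point (more precisely $a\in B_\lambda[y]\cap\partial A$ and $B_\lambda[y]\subset A$). The key geometric fact is that $B_\lambda[y]\setminus\{a\}\subset\interior(A)$: an open ball of radius $\lambda$ sitting inside the closed set $A$ has its interior inside $\interior(A)$, and only the single contact point $a$ can be on $\partial A$ — if two boundary points of $A$ lay on $\partial B_\lambda[y]$ one still gets that the open ball $B_\lambda(y)$ is in $\interior(A)$. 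Now I want to derive the contradiction: I will show that points of $A^c$ cannot accumulate at $a$ from the side where the rolling ball sits, and by choosing $a$ close to $x$ and the geometry carefully, that $x$ itself is trapped inside such an interior ball, contradicting $x\in\overline{A^c}$. The cleanest route: if $x\in\interior(A)$, apply the rolling condition not at a boundary point but observe that $x\in\interior(A)$ together with the rolling freely hypothesis forces a whole $\lambda$-ball through $x$ inside $A$; but actually the rolling condition is stated only for boundary points, so I should instead show $\overline{A^c}\subset\overline{\interior(A^c)}$ is false only on $\partial A$, i.e. reduce (i) to showing $\partial A\cap\interior(\overline{A^c})=\emptyset$, and at a boundary point $a$ the inner tangent ball $B_\lambda[y]\subset A$ gives a neighbourhood of $a$ (on one side) disjoint from $A^c$, so $a\notin\interior(\overline{A^c})$.

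**Deducing (ii) from (i).** Once (i) holds, the second identity should follow by general point-set topology together with one more application of the rolling condition to rule out isolated-type pathologies. In general $\partial\overline{A^c}\subset\partial(A^c)=\partial A$ always (closure can only shrink the boundary), so one inclusion is free. For the reverse, $\partial A\subset\partial\overline{A^c}$: take $a\in\partial A$; every neighbourhood of $a$ meets $A^c$ hence meets $\overline{A^c}$, so I need every neighbourhood of $a$ to also meet $(\overline{A^c})^c=\interior(A)$ (using (i), $(\overline{A^c})^c=\interior(A)$). Since $a\in\partial A$ and the rolling ball $B_\lambda[y]\subset A$ touches $a$, every neighbourhood of $a$ contains points of $B_\lambda(y)\subset\interior(A)$, giving $a\in\partial\overline{A^c}$. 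So (ii) follows.

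**Main obstacle.** The delicate point is the reduction in part (i): showing that the only place $\interior(\overline{A^c})$ could meet $A$ is on $\partial A$, and then that the inner rolling ball genuinely provides a relative neighbourhood of the contact point $a$ lying outside $\overline{A^c}$ — this requires knowing that the open ball $B_\lambda(y)$ lies in $\interior(A)$ and that $a$, being on $\partial B_\lambda[y]$, has points of $\interior(A)$ arbitrarily close from inside the ball, so $a$ cannot be interior to $\overline{A^c}$. Handling the possibility that the tangent ball touches $\partial A$ in more than one point, and the careful bookkeeping of which side of $a$ the ball occupies, is the part that needs genuine care rather than routine manipulation.
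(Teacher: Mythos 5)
Your proposal is correct and uses essentially the same key idea as the paper: at a boundary point $a\in\partial A\cap\interior(\overline{A^c})$ the freely rolling ball supplies an open ball $B_\lambda(y)\subset\interior(A)$ with $a\in\partial B_\lambda(y)$, so points of $\interior(A)$ (which is disjoint from $\overline{A^c}$) accumulate at $a$, a contradiction — this is exactly the paper's construction of the points $y_\tau=x+\tau(p-x)/\|p-x\|$. The only (harmless) divergence is in the second identity, which the paper obtains purely by set algebra from part (i) via $\partial\overline{A^c}=\overline{A^c}\setminus\interior(\overline{A^c})=\overline{A^c}\setminus A^c=\partial A$, whereas you prove the inclusion $\partial A\subset\partial\overline{A^c}$ by a second appeal to the rolling ball; both are valid.
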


\begin{proof}First, we will prove that $\interior(\overline{A^c})=A^c$.
Since that $A^c$ is open and $A^c\subset \overline{A^c}$ then $A^c\subset \interior(\overline{A^c})$. Next, it will be proved that $ \interior(\overline{A^c})\subset A^c$. Let us suppose the contrary, that is, there exists $x\in \interior(\overline{A^c})$ such that $x\in A$. Then, $x\in A\cap\overline{A^c}=\partial A$. Rolling freely in $A$ guarantees that there exists $p\in  A$ such that $x\in B_\lambda[p]\subset A$ with $\|x-p\|=\lambda$. Since that $x\in \interior(\overline{A^c})$, there exists $\epsilon >0$ such that $B_\epsilon[x]\subset \overline{A^c}$. Let us assume that $\epsilon<\lambda$ and let us consider the point
$$y_\tau=x+\tau\frac{p-x}{\|p-x\|},\mbox{ }\tau\in(0,\epsilon).\vspace{1.4mm}$$Then, $y_\tau\in B_\lambda(p)\subset \interior(A)$. So, a contradiction is obtained since $y_\tau\in B_\epsilon[x]\subset \overline{A^c}$.\\Proving $\partial A=\partial \overline{A^c}$ is easy because the boundary of a set can be written as the closure minus the interior. In addition, $A$ is closed and $\interior(\overline{A^c})=A^c$. So,
\[\partial \overline{A^c}=\overline{\overline{A^c}}\setminus \interior(\overline{A^c})=\overline{A^c}\setminus A^c=\overline{A^c}\setminus \interior( A^c)=\partial A^c=\partial A.\qedhere\]\end{proof}

The balls of radius $r$ and radius $\lambda$ that roll freely in $\overline{G(t)^c}$ and $G(t)$, respectively, under ($R_{\lambda}^r$) have been characterized, see Lemma \ref{olvido2}. It is easy to prove too that there exists $x_t\in \interior(C_\gamma(G(t)))\cap \partial G(t)$ for all $\gamma>0$ such that $G(t)\varsubsetneq C_\gamma(G(t))$, see Lemma \ref{puntointerior2}.

\begin{lemma}\label{olvido2}Let $G(t)\subset\mathbb{R}^{d}$ be a closed level set verifying ($R_\lambda^r$). Then, for each $x_t\in \partial G(t)$ there exists a unique unit vector $\eta(x_t)$ such that
$$B_{\lambda}(x_t-\lambda \eta(x_t))\subset G(t)\mbox{ and }B_{r}(x_t+r \eta(x_t))\subset \overline{G(t)^c}.\vspace{2mm}$$
\end{lemma}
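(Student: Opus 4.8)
The plan is to work at a fixed point $x_t\in\partial G(t)$ (there is nothing to prove if $\partial G(t)=\emptyset$, so $G(t)$ is nonempty and closed), to produce the inner and outer balls furnished by $(R_{\lambda}^r)$, to show that they are internally tangent to $\partial G(t)$ at $x_t$ along a common line, and finally to derive uniqueness of the direction by a cross\nobreakdash-pairing argument. Since a closed ball of radius $\lambda$ rolls freely in $G(t)$, there is a centre $p$ with $x_t\in B_\lambda[p]\subset G(t)$. Because $G(t)$ is nonempty, closed and carries a freely rolling $\lambda$-ball, Lemma~\ref{noais} gives $\partial G(t)=\partial\overline{G(t)^c}$ and $\interior(\overline{G(t)^c})=G(t)^c$; in particular $x_t\in\partial\overline{G(t)^c}$, so, since a closed ball of radius $r$ rolls freely in $\overline{G(t)^c}$, there is a centre $q$ with $x_t\in B_r[q]\subset\overline{G(t)^c}$. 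I would first check that $x_t$ lies on the boundary of each ball: if $\|x_t-p\|<\lambda$ then $x_t\in B_\lambda(p)\subset\interior(G(t))$, contradicting $x_t\in\partial G(t)$; if $\|x_t-q\|<r$ then $x_t\in B_r(q)\subset\interior(\overline{G(t)^c})=G(t)^c$, again impossible since $x_t\in\partial G(t)\subset G(t)$. Hence $\|x_t-p\|=\lambda$ and $\|x_t-q\|=r$.

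Next I would prove existence. The open balls $B_\lambda(p)$ and $B_r(q)$ are disjoint, since $B_\lambda(p)\subset\interior(G(t))$ while $B_r(q)\subset\interior(\overline{G(t)^c})=G(t)^c$. Two disjoint open balls have centres at distance at least the sum of their radii, so $\|p-q\|\ge\lambda+r$, while the triangle inequality gives the reverse bound $\|p-q\|\le\|p-x_t\|+\|x_t-q\|=\lambda+r$. Equality in the triangle inequality (strict convexity of the Euclidean norm) forces $x_t$ to lie on the segment $[p,q]$, so $x_t=(1-s)p+sq$ with $s=\lambda/(\lambda+r)$, and therefore the unit vector
$$\eta(x_t):=\frac{x_t-p}{\lambda}=\frac{q-x_t}{r}=\frac{q-p}{\|q-p\|}$$
is well defined and satisfies $p=x_t-\lambda\,\eta(x_t)$ and $q=x_t+r\,\eta(x_t)$. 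Consequently $B_\lambda(x_t-\lambda\,\eta(x_t))=B_\lambda(p)\subset G(t)$ and $B_r(x_t+r\,\eta(x_t))=B_r(q)\subset\overline{G(t)^c}$, which is the asserted existence.

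For uniqueness, suppose $\eta'$ is a unit vector with $B_\lambda(x_t-\lambda\eta')\subset G(t)$ and $B_r(x_t+r\eta')\subset\overline{G(t)^c}$; an open ball contained in $\overline{G(t)^c}$ is contained in $\interior(\overline{G(t)^c})=G(t)^c$, so $B_r(x_t+r\eta')$ is disjoint from $B_\lambda(x_t-\lambda\,\eta(x_t))\subset G(t)$, and $x_t$ is a boundary point of both. Running the argument of the previous paragraph with this pair shows the two centres are at distance exactly $\lambda+r$ with $x_t$ between them, whence comparing directions yields $\eta'=((x_t+r\eta')-x_t)/r=(x_t-(x_t-\lambda\,\eta(x_t)))/\lambda=\eta(x_t)$. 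I expect the only real care to be needed in the open\nobreakdash-versus\nobreakdash-closed ball bookkeeping and in justifying, via Lemma~\ref{noais}, both that the freely rolling $r$-ball may be invoked at $x_t$ (which is a priori a boundary point of $\overline{G(t)^c}$, not of $G(t)$) and that an open ball inside $\overline{G(t)^c}$ genuinely avoids $G(t)$; the geometric core — two disjoint open balls sharing a boundary point are internally tangent there along the line of centres — is then elementary.
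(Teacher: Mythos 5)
Your proof is correct, and it is more self-contained than the one in the paper. Both arguments share the same skeleton: produce the inner ball $B_\lambda[p]\subset G(t)$ from the rolling condition, use Lemma~\ref{noais} to identify $\partial G(t)$ with $\partial\overline{G(t)^c}$ so that the rolling condition in $\overline{G(t)^c}$ can be invoked at $x_t$, and then produce the outer ball $B_r[q]$. Where you diverge is at the crux: the paper imports the facts that the inner centre can be written as $x_t-\lambda\eta(x_t)$, that the outer centre is the antipodal point $x_t+r\eta(x_t)$ along the \emph{same} direction, and (implicitly) that $\eta(x_t)$ is unique, from Lemmas 7.1 and 7.2 of Rodr\'iguez-Casal and Saavedra-Nieves (2014), which characterize $\eta$ via metric projections. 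You instead prove all of this from first principles: the observation that $B_\lambda(p)\subset\interior(G(t))$ and $B_r(q)\subset\interior(\overline{G(t)^c})=G(t)^c$ are disjoint open balls sharing the boundary point $x_t$ forces $\|p-q\|\ge\lambda+r$, while the triangle inequality through $x_t$ gives $\|p-q\|\le\lambda+r$, and the equality case pins $x_t$ to the segment $[p,q]$ and yields the common unit vector. Your cross-pairing argument for uniqueness (pairing a putative second outer ball with the original inner ball) is a clean touch that the paper's proof does not spell out at all. The trade-off is the usual one: the paper's version is shorter but leans on an external companion paper, whereas yours is longer but verifiable on the spot; the only bookkeeping you rightly flag --- that $\|x_t-p\|=\lambda$ and $\|x_t-q\|=r$ exactly, and that an open ball inside $\overline{G(t)^c}$ actually avoids $G(t)$ --- is handled correctly via Lemma~\ref{noais}.
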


\begin{proof}Let $x_t\in \partial G(t)$. Under ($R_\lambda^r$), a ball of radius $\lambda$ rolls freely in $G(t)$. Then,

$$\exists x\in G(t) \mbox{ such that }B_{\lambda}(x)\subset G(t).\vspace{2.mm}$$In addition, it is possible to write (see Lemma 7.1 in Rodr\'iguez-Casal and Saavedra-Nieves (2014))

$$x=x_t-\lambda\eta(x_t)\mbox{ with }\eta(x_t)=\frac{x_t-x}{\|x_t-x\|}.\vspace{2.mm}$$According to Lemma \ref{noais}, $\partial A=\partial \overline{A^c}$ and, so, $a\in \partial \overline{A^c}$. Under ($R_\lambda^r$), it is verified that a ball of radius $r$ rolls freely in $\overline{A^c}$. Then,

$$\exists y\in \overline{A^c}\mbox{ such that }B_{r}(y)\subset \overline{A^c}\vspace{2.mm}$$verifying that $\|y-a\|=d(y,A)=r$. So, $a$ is metric projection of a point $y\notin A$. According to the Lemma 7.2 in Rodr\'iguez-Casal and Saavedra-Nieves (2014),

\[y =a+r\eta(a)\mbox{ and then }B_{r}(a+r \eta(a))\subset \overline{A^c}.\qedhere\]\end{proof}

\begin{lemma}\label{puntointerior2}Let $G(t)\subset \mathbb{R}^{d}$ be a compact and nonempty level set verifying ($R_{\lambda}^r$) and $\gamma>0$ such that $G(t)\varsubsetneq C_\gamma(G(t))$. Then, there exists $x_t\in \interior(C_\gamma(G(t)))\cap \partial G(t)$.
\end{lemma}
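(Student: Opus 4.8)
The plan is to produce the point explicitly. Since $G(t)\varsubsetneq C_\gamma(G(t))$, pick $z\in C_\gamma(G(t))\setminus G(t)$, let $p$ be a nearest point of the compact set $G(t)$ to $z$, and show that $p$ itself lies in $\interior(C_\gamma(G(t)))$; as $z\notin G(t)$ forces $p\in\partial G(t)$, this $p$ is the desired $x_t$.

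First I would record two elementary facts about $z$ and $p$. Writing $\mu=d(z,G(t))=\|z-p\|>0$, the inclusion $z\in C_\gamma(G(t))$ gives $\mu\le\gamma$: otherwise the open ball $B_\gamma(z)$ would miss $G(t)$, so $z\in B_\gamma(z)\subset C_\gamma(G(t))^c$, a contradiction. Next, since $G(t)$ satisfies $(R_\lambda^r)$, Lemma~\ref{olvido2} provides a unit vector $\eta(p)$ with $B_\lambda[p-\lambda\eta(p)]\subset G(t)$ (the closed ball, using that $G(t)$ is closed). Because this ball lies in $G(t)$ and $z\notin G(t)$, we get $\|z-(p-\lambda\eta(p))\|-\lambda=d(z,B_\lambda[p-\lambda\eta(p)])\ge d(z,G(t))=\mu$, i.e. $\|(z-p)+\lambda\eta(p)\|\ge\|z-p\|+\lambda$; the reverse inequality is the triangle inequality, so equality holds and $z-p$ and $\eta(p)$ are positively proportional. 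As $\|z-p\|=\mu$, this yields $z=p+\mu\eta(p)$.

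The heart of the argument is the claim $p\in\interior(C_\gamma(G(t)))$, which I would prove by contradiction. If it fails, then since $p\in G(t)\subset C_\gamma(G(t))$ and $C_\gamma(G(t))$ is closed, $p\in\partial C_\gamma(G(t))$, so there is a sequence $w_k\to p$ with $w_k\notin C_\gamma(G(t))$. By definition of the $r$-convex hull each $w_k$ lies in some open ball $B_\gamma(x_k)$ with $B_\gamma(x_k)\cap G(t)=\emptyset$, i.e. $d(x_k,G(t))\ge\gamma$; the centres $x_k$ are bounded, so along a subsequence $x_k\to x$ with $d(x,G(t))\ge\gamma$ and $\|x-p\|=\lim_k\|x_k-w_k\|\le\gamma$. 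Since $p\in G(t)$ this forces $\|x-p\|=\gamma=d(x,G(t))$, so $p$ is a nearest point of $G(t)$ to $x$ and $B_\gamma(x)\cap G(t)=\emptyset$. Repeating the triangle‑inequality computation of the previous paragraph with $x$ in the role of $z$ and $\gamma$ in the role of $\mu$ gives $x=p+\gamma\eta(p)$. Hence $B_\gamma(p+\gamma\eta(p))$ is one of the balls appearing in the intersection defining $C_\gamma(G(t))$, so $C_\gamma(G(t))\cap B_\gamma(p+\gamma\eta(p))=\emptyset$. But $\|z-(p+\gamma\eta(p))\|=|\mu-\gamma|=\gamma-\mu<\gamma$ (using $0<\mu\le\gamma$), so $z\in B_\gamma(p+\gamma\eta(p))$, contradicting $z\in C_\gamma(G(t))$. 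Therefore $p\in\interior(C_\gamma(G(t)))$, and, being a boundary point of $G(t)$, it is the required $x_t$.

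The only delicate step is the extraction of the exterior $\gamma$-ball at $p$: one must be sure it avoids all of $G(t)$, not merely $\interior(C_\gamma(G(t)))$, and that its centre sits exactly on the ray $p+\gamma\eta(p)$. I would handle the first issue by passing to the limit on the balls furnished directly by the definition of the $r$-convex hull, using continuity of $d(\cdot,G(t))$, and the second by the same equality‑in‑the‑triangle‑inequality device that pins down $z$; the freely rolling ball $B_\lambda[p-\lambda\eta(p)]\subset G(t)$ is what makes both collinearity statements work. Everything else is routine; note that the nonconvexity hypothesis is not actually used here, since the standing assumption $G(t)\varsubsetneq C_\gamma(G(t))$ already excludes the convex case.
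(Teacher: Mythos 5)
Your proof is correct. Note, however, that the paper does not actually prove this lemma: it simply defers to Lemma~7.4 of Rodr\'iguez-Casal and Saavedra-Nieves (2014), so your argument is a genuinely self-contained alternative. Your route --- take $z\in C_\gamma(G(t))\setminus G(t)$, project it onto $G(t)$ to get $p\in\partial G(t)$, use the inner rolling ball $B_\lambda(p-\lambda\eta(p))\subset G(t)$ from Lemma~\ref{olvido2} together with equality in the triangle inequality to force $z=p+\mu\eta(p)$, and then rule out $p\in\partial C_\gamma(G(t))$ by extracting a limiting exterior ball $B_\gamma(p+\gamma\eta(p))$ that would have to contain $z$ --- is sound at every step I checked: the compactness/continuity argument producing the limit centre $x$ with $d(x,G(t))=\|x-p\|=\gamma$ is the one genuinely delicate point and you handle it properly, and the collinearity device correctly pins down both $z$ and $x$ on the ray $p+s\,\eta(p)$. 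What this buys is independence from the unpublished reference: the lemma becomes a consequence of only the definition of the $\gamma$-convex hull, the closedness of $C_\gamma(G(t))$, and the inner half of the rolling condition (the outer ball of radius $r$ is never needed). Two cosmetic remarks: your step~1 actually yields the strict inequality $\mu<\gamma$ (if $\mu\geq\gamma$ the open ball $B_\gamma(z)$ already misses $G(t)$), though $\mu\le\gamma$ together with $\mu>0$ is all you use; and the closing comment about a nonconvexity hypothesis is moot, since the lemma as stated assumes only $G(t)\varsubsetneq C_\gamma(G(t))$.
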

\begin{proof}The proof can be obtained from Lemma 7.4 in Rodr\'iguez-Casal and Saavedra-Nieves (2014).
\end{proof}

It remains to prove that $\hat{r}_0(t)$ can not be arbitrarily larger than $r_0(t)$. This is established in Lemma \ref{rmaiorr0}. In order to prove consistency, see Lemma \ref{rmaiorr02}.
\begin{lemma}\label{rmaiorr0}Let $G(t)$ be a compact, nonempty and nonconvex level set. Under assumptions (A), (D) and (K), let $\mathcal{X}_n$ be a random sample generated from a distribution with density function $f$, $r_0(t)$ and $\hat{r}_0(t)$ established in Definitions \ref{r_0_t} and \ref{jejejeje}, respectively. Then, for any $\epsilon>0$
$$\mathbb{P}(\hat{r}_0(t)\leq r_0(t)+\epsilon,\mbox{ eventually})=1.$$
\end{lemma}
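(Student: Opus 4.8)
The plan is to reduce the statement to producing, almost surely and eventually, one point of $\mathcal{X}_n^-(t)$ that lies inside $C_\gamma(\mathcal{X}_n^+(t))$ for some fixed $\gamma$ only slightly larger than $r_0(t)$, and to obtain such a point from the two ``density'' facts already available: $\mathcal{X}_n^-(t)$ eventually meets every fixed ball contained in $\interior(G(l-\zeta))\cap G(t)^c$ (Lemma \ref{mostmenos}), and $\mathcal{X}_n^+(t)$ is eventually dense in $G(t)$ (Corollary \ref{xeneralizacionsoporte}). First I would note that, $G(t)$ being nonconvex, $r_0(t)<\infty$; then I fix $\epsilon>0$ and set $\gamma_{*}=r_0(t)+\epsilon/3$ and $\gamma=r_0(t)+\epsilon/2$. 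Since $C_\gamma(\mathcal{X}_n^+(t))$ is nondecreasing in $\gamma$, it is enough to show that $\mathbb{P}(C_\gamma(\mathcal{X}_n^+(t))\cap\mathcal{X}_n^-(t)\neq\emptyset,\ \text{eventually})=1$: this already forces $\hat{r}_0(t)\leq\gamma<r_0(t)+\epsilon$ eventually.

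Next I would single out a fixed ball on which to work. As $\gamma_{*}>r_0(t)$ we have $G(t)\varsubsetneq C_{\gamma_{*}}(G(t))$; recalling that (A) implies that a ball of radius $m/k$ rolls freely both in $G(t)$ and in $\overline{G(t)^c}$, i.e. that $(R_{m/k}^{m/k})$ holds, Lemma \ref{puntointerior2} provides a point $x_t\in\interior(C_{\gamma_{*}}(G(t)))\cap\partial G(t)$. Applying Lemma \ref{olvido2} at $x_t$ gives a unit vector $\eta(x_t)$ with $B_{m/k}(x_t+(m/k)\eta(x_t))\subset\overline{G(t)^c}$, and, this being an open ball, Lemma \ref{noais} puts it inside $G(t)^c$. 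Moreover $x_t\in G(t)\subset\interior(G(l-\zeta))$ by (A). Hence, choosing $z=x_t+s\,\eta(x_t)$ with $s>0$ small and then $\eta_0>0$ small, I can fix a ball with
\[
\overline{B_{\eta_0}(z)}\subset\interior(C_{\gamma_{*}}(G(t)))\cap G(t)^c\cap\interior(G(l-\zeta)).
\]

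The core step is then to transfer the inclusion $\overline{B_{\eta_0}(z)}\subset\interior(C_{\gamma_{*}}(G(t)))$ from the population hull to the sample hull. Writing $\delta_n=d_H(G(t),\mathcal{X}_n^+(t))$, Corollary \ref{xeneralizacionsoporte} gives $\delta_n\to0$ almost surely and $G(t)\subset\mathcal{X}_n^+(t)\oplus\delta_n B_1[0]$. Using $C_\gamma(A)=(A\oplus\gamma B_1(0))\ominus\gamma B_1(0)$ together with the elementary Minkowski identities $\gamma B_1(0)=(\gamma-\delta_n) B_1(0)\oplus\delta_n B_1[0]$ and $X\ominus(D\oplus E)=(X\ominus D)\ominus E$, I obtain, for $n$ large,
\[
C_\gamma(\mathcal{X}_n^+(t))\ \supset\ C_{\gamma-\delta_n}(G(t))\ominus\delta_n B_1[0]\ \supset\ C_{\gamma_{*}}(G(t))\ominus\delta_n B_1[0]\ \supset\ B_{\eta_0}(z),
\]
the middle inclusion holding once $\gamma-\delta_n\geq\gamma_{*}$ and the last one once $B_{\eta_0}(z)\oplus\delta_n B_1[0]\subset\interior(C_{\gamma_{*}}(G(t)))$, both of which occur eventually because $\delta_n\to0$ and $\overline{B_{\eta_0}(z)}$ is a compact subset of the open set $\interior(C_{\gamma_{*}}(G(t)))$. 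Thus $B_{\eta_0}(z)\subset C_\gamma(\mathcal{X}_n^+(t))$ eventually, almost surely.

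Finally, since $B_{\eta_0}(z)\subset\interior(G(l-\zeta))$ and $B_{\eta_0}(z)\cap G(t)=\emptyset$, Lemma \ref{mostmenos} yields $\mathcal{X}_n^-(t)\cap B_{\eta_0}(z)\neq\emptyset$ eventually, almost surely; intersecting the two almost sure events gives $\emptyset\neq\mathcal{X}_n^-(t)\cap B_{\eta_0}(z)\subset\mathcal{X}_n^-(t)\cap C_\gamma(\mathcal{X}_n^+(t))$ eventually, which completes the argument. The step I expect to be the main obstacle is precisely the transfer inclusion $C_\gamma(\mathcal{X}_n^+(t))\supset C_{\gamma_{*}}(G(t))\ominus\delta_n B_1[0]$: the $r$-convex hull is not Hausdorff-continuous, so one cannot simply pass to a limit $C_\gamma(\mathcal{X}_n^+(t))\to C_\gamma(G(t))$; the device is to give away a little radius ($\gamma_{*}<\gamma$) and a little erosion ($\delta_n B_1[0]$), which costs nothing because $x_t$ was chosen in the \emph{interior} of $C_{\gamma_{*}}(G(t))$, and the only delicate point is the bookkeeping of open versus closed balls in the Minkowski identities.
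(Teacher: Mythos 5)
Your proposal is correct and follows essentially the same route as the paper's proof: a point $x_t\in\interior(C_{r'}(G(t)))\cap\partial G(t)$ from Lemma \ref{puntointerior2} and the rolling condition yield a fixed ball in $G(t)^c\cap\interior(G(l-\zeta))$ that lies strictly inside the slightly inflated population hull, the Minkowski dilation/erosion algebra transfers that ball into $C_{r}(\mathcal{X}_n^+(t))$ using the eventual inclusion $G(t)\subset\mathcal{X}_n^+(t)\oplus(\text{small})B_1[0]$, and Lemma \ref{mostmenos} supplies a point of $\mathcal{X}_n^-(t)$ in the ball. The only cosmetic difference is that you let the dilation radius be $\delta_n=d_H(G(t),\mathcal{X}_n^+(t))\to 0$ where the paper fixes it at $r-r'$ tied to the ball's radius; both reduce to the same morphological identity.
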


\begin{proof}Given $\epsilon>0$, let be $r=r_0(t)+\epsilon>r_0(t)$. Let be $r^{'}$ such that $r>r^{'}>r_0(t)$. The proof is split in several steps:

\begin{enumerate}
  \item First, we will prove that there exists $B_\gamma(x)$ verifying that
  \begin{equation}\label{hyuiop}
    B_\gamma(x)\subset C_{r^{'}}(G(t))\cap G(l-\zeta)\mbox{ and }B_\gamma(x)\cap G(t)=\emptyset.
  \end{equation}According to Proposition \ref{puntointerior2}, there exists $x_t\in \interior ( C_{r^{'}}(G(t)) )\cap \partial G(t)$:
\begin{enumerate}
       \item Then, there exists $\gamma_1>0$ such that $B_{\gamma_1}(x_t)\subset  C_{r^{'}}(G(t)) $.
       \item Since $x_t\in \partial G(t)$, $f(x_t)=t>l>l-\zeta$. Therefore, $x_t\in \interior (G(l-\zeta))$. As consequence, there exists $\gamma_2>0$ such that $B_{\gamma_2}(x_t)\subset  \interior ( G(l-\zeta))$.
           \item In addition, a ball of radius $m/k$ rolls freely in $\overline{G(t)^c}$. Then, there exists $y\in G(t)^c$ such that $x_t\in B_{m/k}[y]$ with $B_{m/k}(y)\cap G(t)=\emptyset$.
                           \end{enumerate}
 We fixed $0<\gamma\leq\min\{\gamma_1,\mbox{ }\gamma_2,\mbox{ }m/k\}/2$ and $x=x_t+\gamma\eta(x_t)$, see Figure \ref{oosssokkkdkd} and Lemma \ref{olvido2} for remember details about the vector $\eta(x_t)$. For this $\gamma$, $B_\gamma(x)$ satisfies (\ref{hyuiop}). In addition, notice that we can assume that, without loss of generality, $r\leq r^{'}+\gamma/2$. Otherwise, if $r-r^{'}> \gamma/2$, we could select $r^*=r^{'}+\gamma/2<r$ verifying $r^*>r^{'}>r_0(t)$. For this $r^*$, (\ref{hyuiop}) is still satisfied.
\begin{figure*}[h!]\vspace{-1cm}
\hspace{1cm}\begin{pspicture}(-1.7,-1.9)(10,7.5)
\rput(-1.2,-1.23){\scalebox{1.3}[1.5]{\psccurve[showpoints=false,fillstyle=solid,fillcolor=white,linecolor=black,linewidth=0.3mm,linearc=3](1.5,0)(1,2.5)(1.5,5)(4,4.85)(7,4)(6.2,-0.05)(4,0.1)}}
\psccurve[showpoints=false,fillstyle=solid,fillcolor=white,linecolor=gray,linewidth=.75mm,linearc=3](1.5,0)(1,2.5)(1.5,5)(4,4.85)(7,4)(6.2,-0.05)(5,1.5)
\psccurve[showpoints=false,fillstyle=solid,fillcolor=white,linecolor=white,linewidth=.85mm,linearc=3](1.5,0)(6.2,-0.05)(5,1.5)
\psarc[showpoints=false,fillstyle=solid,fillcolor=white,linecolor=gray,linewidth=0.45mm,linearc=3](6.108,.7351) {.8}{264}{280}
\psarc[showpoints=false,fillstyle=solid,fillcolor=red,linecolor=gray,linewidth=0.45mm,linearc=3](1.87,0.805) {.9}{240}{281}
\psarc[showpoints=false,fillstyle=solid,fillcolor=white,linecolor=gray,linewidth=0.45mm,linearc=3](4.1,-9.867) {10}{78.9}{102}
\psccurve[showpoints=false,fillstyle=solid,fillcolor=white,linecolor=black,linewidth=0.3mm,linearc=3](1.5,0)(1,2.5)(1.5,5)(4,4.85)(7,4)(6.2,-0.05)(5,1.5)

\rput(1.65,1){\small{$B_{\gamma_2}(x_t)$}}
\rput(9.5,3){\small{$G(l-\zeta)$}}
\pscircle[linearc=0.25,linecolor=black,linewidth=0.2mm,linestyle=dashed,dash=3pt 3pt](4.8,1.53){2.6}
\pscircle[linearc=0.25,linecolor=black,linewidth=0.2mm,linestyle=dashed,dash=3pt 3pt](4.8,1.53){1.}
 \pscircle[linearc=0.25,linecolor=black,linewidth=0.2mm,linestyle=solid,dash=3pt 2pt](4.79,0.935){.6}
 \pscircle[linearc=0.25,linecolor=gray,linewidth=0.2mm,linestyle=solid,dash=3pt 2pt,fillstyle=crosshatch*,fillcolor=gray,hatchcolor=white,hatchwidth=1.2pt,hatchsep=.5pt,hatchangle=0](4.8,1.03){.47}
 \rput(4.8,1.685){\small{$x_t$}}
 \rput(1.7,4){$G(t)$}
\rput(6.5,5){$C_{r^{'}}(G(t))$}
\rput(4.8,2.83){\small{$B_{\gamma_1}(x_t)$}}
\rput(3.74,0.46){\small{$B_{m/k}[y]$}}
\psdots*[dotsize=2pt](4.8,1.53)
\end{pspicture}\caption{Elements of proof in Theorem \ref{rmaiorr0}. $B_\gamma[x]$ in gray color.}\label{oosssokkkdkd}
\end{figure*}
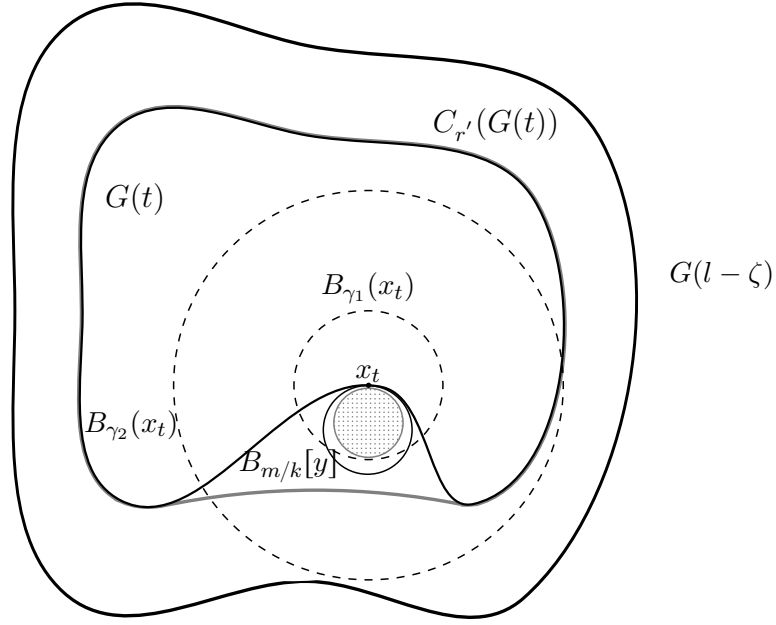
  \item According to Proposition \ref{alberto5}, with probability one and for $n$ large enough, $$G(t)\subset \mathcal{X}_n^+(t)\oplus  B_{r-r^{'}}[0].$$Then, with probability one and for $n$ large enough, it is verified that
  $$G(t)\oplus B_{r^{'}}[0]\subset \left(\mathcal{X}_n^+(t)\oplus  B_{r-r^{'}}[0]\right)\oplus B_{r^{'}}[0].$$Therefore, with probability one and for $n$ large enough,
 $$C_{r^{'}}(G(t))=(G(t)\oplus B_{r^{'}}[0])\ominus B_{r^{'}}[0]\subset \left[\left(\mathcal{X}_n^+(t)\oplus  B_{r-r^{'}}[0]\right)\oplus B_{r^{'}}[0]\right]\ominus B_{r^{'}}[0]$$or equivalently,
 $$C_{r^{'}}(G(t))=(G(t)\oplus B_{r^{'}}[0])\ominus B_{r^{'}}[0]\subset \left[\left(\mathcal{X}_n^+(t)\oplus B_r[0]\right)\right]\ominus B_{r^{'}}[0].$$Then,
 $$C_{r^{'}}(G(t)) \ominus B_{r-r^{'}}[0]\subset \left[\left(\mathcal{X}_n^+(t)\oplus B_r[0]\right)\ominus B_{r^{'}}[0]\right]\ominus B_{r-r^{'}}[0]=C_r(\mathcal{X}_n^+(t)).$$Since $B_\gamma(x)\subset C_{r^{'}}(G(t))$, if $r-r^{'}\leq \gamma/2$ then
 $$B_{\frac{\gamma}{2}}(x)\subset C_{r^{'}}(G(t))\ominus B_{r-r^{'}}[0]\subset C_{r}(\mathcal{X}_n^+(t)).$$
 \item According to Lemma \ref{mostmenos}, with probability one and for $n$ large enough,\\$ \mathcal{X}_n^-(t) \cap B_{\gamma/2}(x)\neq \emptyset$ and, hence, $ \mathcal{X}_n^-(t) \cap C_r(\mathcal{X}_n^+(t))\neq \emptyset$. Therefore, we can conclude that $\hat{r}_0(t)\leq r$.\qedhere
\end{enumerate}
 \end{proof}
The proof is a straightforward consequence of Lemmas \ref{rooooj} and \ref{rmaiorr0}.\hfill $\Box$\vspace{.15cm}\\
\emph{Proof of Proposition \ref{llllll}.}\vspace{.15cm}\\According to Proposition \ref{mostramaisenGlambda}, with probability one,
$$\exists n_1\in\mathbb{N} \mbox{ such that }\mathcal{X}_n^+(t)\subset G(t),\mbox{ }\forall n\geq n_1.$$Since $ r_n(t)$ converges to $\nu r_0(t)$, almost surely, we have that, with probability one,
$$\exists n_2\in\mathbb{N} \mbox{ such that }r_n(t)\leq r_0(t),\mbox{ }\forall n\geq n_2.$$If $n\geq \max\{n_1,n_2\}$,
\[C_{r_n(t)}(\mathcal{X}_n^+(t))\subset C_{r_0(t)}(\mathcal{X}_n^+(t)) \subset G(t). \]\hfill $\Box$\vspace{.15cm}\\
\emph{Proof of Theorem \ref{principal}.}\vspace{.15cm}\\It is necessary to introduce some auxiliary sets in order to obtain the convergence rates of the resulting estimator for the level set, see Definitions \ref{auxiset}, \ref{auximuestra} and Figure \ref{oookffgg}. Really, these new sets are subsets of the original level set $G(t)$ and the sample $\mathcal{X}_n$, respectively. Notice that both are defined from the theoretical density function $f$. The kernel estimator $f_n$ is not considered. On the other hand and although they depend on some parameters like $n$, this fact is not reflected in their names for simplicity in the exposition.

\begin{definition}\label{auxiset}Let $G(t)$ be a compact, nonempty and nonconvex level set. Under assumptions (A) and (D), the set $G^+(t)\subset \mathbb{R}^d$ is defined as the level set with threshold equal to $t+2D_n$. That is, $G^+(t)=G(t+2D_n)$.
\begin{figure}[h!]\centering
\begin{pspicture}(-.0,1)(10,8)
\rput(.1,1.6){\scalebox{1.2}[1.1]{\psccurve[showpoints=false,fillstyle=solid,fillcolor=white,linecolor=black,linewidth=0.25mm,linearc=3](1.5,0)(1.9,2.5)(1.5,5)(4,4.35)(7,4)(6.2,.3)(5,1.5)}}
\rput(1.35,2.39){\scalebox{0.95}[.8]{\psccurve[showpoints=false,fillstyle=solid,fillcolor=white,linecolor=gray,linewidth=0.25mm,linearc=3,fillstyle=crosshatch*,fillcolor=gray,hatchcolor=white,hatchwidth=1.8pt,hatchsep=1pt,hatchangle=0](1.5,0)(1.9,2.5)(1.5,5)(4,4.35)(7,4)(6.2,.3)(5,1.5)}}
\rput(2.3,6.8){$G(t)$}
\rput(7.5,5.00){$G^+(t)$}
\end{pspicture}\caption{Sets $G^+(t)$ and $G(t)$ in Definition \ref{auxiset}.}\label{oookffgg}
\end{figure}
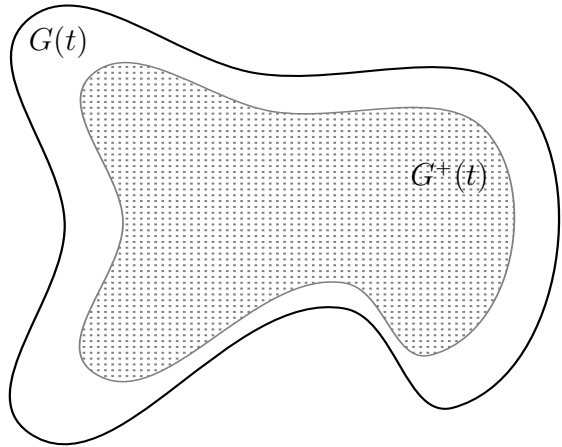
\end{definition}

\begin{definition}\label{auximuestra}Let $G(t)$ be a compact, nonempty and nonconvex level set. Under assumptions (A) and (D), let $\mathcal{X}_n$ be a random sample generated from a distribution with density function $f$ and let $G^+(t)\subset \mathbb{R}^d$ be the level set established in Definition \ref{auxiset}. The set $\mathcal{X}_n^{G^+}$ is defined by $\mathcal{X}_n\cap G^+(t)$. Therefore, it can be written as $\mathcal{X}_n^{G^+}=\{X_i\in\mathcal{X}_n:f(X_i)\geq t+2D_n\}$.
\end{definition}

A new class of sets is presented in Definition \ref{waltherG}. This family was already considered in Walther (1997).

\begin{definition}\label{waltherG}Let $A\subset \mathbb{R}^d$ be a set and $\gamma>0$. Then, $\mathcal{G}_{A}(\gamma)$ denotes all sets $B$ that verify ($R_{\gamma}^\gamma$) satisfying $B\subset A$.
\end{definition}

The smoothing parameter established in Definition \ref{r_0_t} is studied in Lemma \ref{debemosescribirlo5} for the sets $G^+(t)$.

\begin{lemma}\label{debemosescribirlo5}Under assumptions (A) and (D), let $r_0(t)$ established in Definition \ref{r_0_t}. It is verified that
$$\exists n_0\in\mathbb{N}\mbox{ such that }  r_0(t+2D_n) \geq m/k,\mbox{ }\forall n\geq n_0.$$

\end{lemma}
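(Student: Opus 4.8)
The plan is to reduce the statement for $G^+(t)=G(t+2D_n)$ to the already-known lower bound $r_0(t)\ge m/k$ for ordinary level sets, which follows from Theorem~2 in Walther (1997) under assumption (A). The key point is that the estimate $r_0(s)\ge m/k$ holds not just for the single threshold $t$ but \emph{uniformly} for all thresholds $s$ in a fixed interval: assumption (A) postulates $|\nabla f|\ge m>0$ and the Lipschitz bound $|\nabla f(x)-\nabla f(y)|\le k|x-y|$ on the open set $U$, and $U$ was chosen to contain $\overline{G(l-\zeta)}\setminus\interior(G(u+\zeta))$. Since these two bounds on $\nabla f$ are the only quantitative input to Walther's argument, the same conclusion $r_0(s)\ge m/k$ — equivalently, the shape condition $(R_\lambda^r)$ with $r=\lambda=m/k$, hence $r$-convexity via Proposition~\ref{rconvexo2} — holds for every $s$ such that $G(s)$ is compact, nonempty, nonconvex and whose boundary lies inside $U$.

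First I would make this uniformity explicit: for every $s\in(l,u)$ the level set $G(s)$ satisfies $(R^{m/k}_{m/k})$ and therefore, by Definition~\ref{r_0_t} and Proposition~\ref{rconvexo2}, $r_0(s)\ge m/k$. Second, I would use assumption (D): $D_n=M(\log n/n)^{p/(d+2p)}\to 0$ as $n\to\infty$, so there exists $n_0\in\mathbb{N}$ with $2D_n<u-t$ for all $n\ge n_0$; combined with $t\in(l,u)$ this gives $t+2D_n\in(l,u)$ for all $n\ge n_0$. Third, applying the uniform bound from the first step with $s=t+2D_n$ yields $r_0(t+2D_n)\ge m/k$ for all $n\ge n_0$, which is exactly the claim. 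One should also note that $G^+(t)=G(t+2D_n)$ remains compact and nonempty for $n\ge n_0$ (it is a subset of the compact $G(t)$ and contains, e.g., a neighbourhood of a mode for $t+2D_n<\sup f$), and nonconvex for $n$ large since it converges to $G(t)$ in Hausdorff distance; if $G^+(t)$ happened to be convex then $r_0(t+2D_n)=\infty$ and the inequality is trivial, so there is no loss.

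The only genuine obstacle is the first step: verifying that Walther's derivation of the rolling/$r$-convexity property with constant $m/k$ really is uniform over $s$ and depends only on $m$ and $k$ and not on the specific level. This is where one must be slightly careful — one has to check that the geometric construction in Theorem~2 of Walther (1997) (fitting balls of radius $m/k$ inside $G(s)$ and inside $\overline{G(s)^c}$ at each boundary point) uses nothing about $s$ beyond the gradient lower bound and the Lipschitz constant on $U$, together with the fact that $\partial G(s)\subset U$ for $s\in(l,u)$. Granting that (it is implicit in the discussion preceding Proposition~\ref{rconvexo2}, where it is stated that under (A) one has $r_0(t)\ge m/k$ for the generic threshold $t\in(l,u)$), the rest of the argument is the elementary observation that $D_n\to 0$ pushes $t+2D_n$ into the same interval $(l,u)$ eventually.
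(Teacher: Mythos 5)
Your proposal is correct and follows essentially the same route as the paper: note that $D_n\to 0$ forces $t+2D_n\in(l,u)$ for $n\ge n_0$, and then invoke the fact (from assumption (A) via Theorem 2 of Walther (1997)) that every level set with threshold in $(l,u)$ satisfies the rolling condition with radius $m/k$, hence $r_0(t+2D_n)\ge m/k$. The extra care you take about the uniformity of the bound over thresholds and about $G^+(t)$ being nonconvex is reasonable but not something the paper dwells on.
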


\begin{proof}
Since $\lim_{n\rightarrow\infty}D_n=0$,
$$\exists n_0\in\mathbb{N} \mbox{ such that }2D_n< u-t,\mbox{ }\forall n\geq n_0.$$
Therefore,
$$l< t+2D_n< u,\mbox{ }\forall n\geq n_0.$$\\
Under (A), $G^+(t)$ verifies that a ball of radius $m/k$ rolls freely in $G^+(t)$ and ${G^+(t)}^c$ for $n\geq n_0$. Therefore, \[0<m/k\leq r_0(t+2D_n), \mbox{ }\forall n\geq n_0.\qedhere\]\end{proof}

Next, it will be proved that $G^+(t)\in\mathcal{G}_{G(l)}(r_\nu)$ for $n$ large enough and $r_\nu>0$, see Lemma \ref{debemosescribirlo55} for details about the positive constant $r_\nu$.

\begin{lemma}\label{debemosescribirlo55}Let $G(t)$ be a compact, nonempty and nonconvex level set. Let $G^+(t)$ be the set established in Definition \ref{auxiset}. Under assumptions (A), (D) and (K), let $\hat{r}_0(t)$ established in Definition \ref{jejejeje}, $\nu\in(0,1)$ be a fixed number and $r_n(t)=\nu\hat{r}_0(t)$. Then, there exists $0<r_\nu<m/k$ such that
$$\mathbb{P}(r_n(t)>r_\nu,\mbox{ eventually})=1.$$Further,
$$\exists n_0\in\mathbb{N}\mbox{ such that } G^+(t)\in\mathcal{G}_{G(t)}(r_\nu),\mbox{ }\forall n\geq n_0$$and, therefore,
$$G^+(t)\in\mathcal{G}_{G(l)}(r_\nu),\mbox{ }\forall n\geq n_0$$for $\mathcal{G}_{G(t)}(r_\nu)$, $\mathcal{G}_{G(l)}(r_\nu)$ and $G^+(t)$ established in Definitions \ref{waltherG} and \ref{auxiset}, respectively.
\end{lemma}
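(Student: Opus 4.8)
The plan is to split the statement into two independent pieces: (i) a uniform lower bound $r_n(t) > r_\nu$ eventually, and (ii) the membership $G^+(t) \in \mathcal{G}_{G(t)}(r_\nu)$ for $n$ large. Piece (i) follows from Theorem \ref{rmaiorr02}: since $r_n(t) = \nu \hat{r}_0(t)$ and $\hat{r}_0(t) \to r_0(t)$ almost surely, we get $r_n(t) \to \nu r_0(t)$ almost surely. Because $G(t)$ is nonconvex, $r_0(t)$ is a finite positive number, so $\nu r_0(t) > 0$; choosing, say, $r_\nu = \tfrac{1}{2}\nu r_0(t)$ (and shrinking it if needed so that $r_\nu < m/k$, which is legitimate since any smaller radius also rolls freely) gives $\mathbb{P}(r_n(t) > r_\nu, \text{ eventually}) = 1$. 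The only subtlety is to record that $r_0(t) \geq m/k$ under (A) — already noted in the excerpt right after assumption (A) — so the inequality $r_\nu < m/k$ can indeed be arranged while keeping $r_\nu$ strictly positive.

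For piece (ii), I would argue as in Lemma \ref{debemosescribirlo5}. Since $D_n \to 0$, for $n$ large we have $l < t + 2D_n < u$, so $G^+(t) = G(t+2D_n)$ is a level set with threshold in $(l,u)$ and assumption (A) applies to it. By Theorem 2 in Walther (1997) (the result cited in the excerpt giving $r_0(\cdot) \geq m/k$), a closed ball of radius $m/k$ rolls freely both in $G^+(t)$ and in $\overline{G^+(t)^c}$; since $r_\nu < m/k$, a closed ball of radius $r_\nu$ also rolls freely in both, i.e. $G^+(t)$ satisfies ($R_{r_\nu}^{r_\nu}$). It remains to check the containment $G^+(t) \subset G(t)$: this is immediate because $t + 2D_n \geq t$ forces $\{f \geq t+2D_n\} \subset \{f \geq t\}$. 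Hence $G^+(t) \in \mathcal{G}_{G(t)}(r_\nu)$ for all $n \geq n_0$ by Definition \ref{waltherG}. The final line $G^+(t) \in \mathcal{G}_{G(l)}(r_\nu)$ follows from the same reasoning together with $G(t) \subset G(l)$ (as $t > l$), so $G^+(t) \subset G(t) \subset G(l)$ and the rolling condition is unchanged.

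The main obstacle — really the only place that needs care — is the interplay of the two constants: one must fix $r_\nu$ once and for all as a deterministic number that is simultaneously (a) strictly below $\nu r_0(t)$ so the almost-sure convergence of $r_n(t)$ clears it eventually, and (b) strictly below $m/k$ so that the rolling-ball property of $G^+(t)$ transfers to radius $r_\nu$. Taking $r_\nu = \min\{\tfrac{1}{2}\nu r_0(t),\ \tfrac{1}{2}m/k\}$ does both at once, using $r_0(t) \geq m/k > 0$ from (A). Everything else is bookkeeping: the eventual inclusion $l < t+2D_n < u$, the monotonicity of level sets in the threshold, and an appeal to Walther's Theorem 2 for the rolling condition on $G^+(t)$.
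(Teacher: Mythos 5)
Your proposal is correct and follows essentially the same route as the paper: both pieces rest on the almost-sure convergence $r_n(t)\to\nu r_0(t)$ together with $r_0(t)\ge m/k$, and on the fact (Lemma \ref{debemosescribirlo5}, via Walther's Theorem 2) that for $n$ large enough $l<t+2D_n<u$ so that balls of radius $m/k$ — hence of radius $r_\nu<m/k$ — roll freely in $G^+(t)$ and its complement, with $G^+(t)\subset G(t)\subset G(l)$ by monotonicity. The only cosmetic difference is your choice $r_\nu=\min\{\tfrac12\nu r_0(t),\tfrac12 m/k\}$ versus the paper's $r_\nu<\nu(m/k)$, which is immaterial.
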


\begin{proof}Let be $r_\nu>0$ verifying that $r_\nu<\nu (m/k)<m/k$. It is easy to prove that $\mathbb{P}(r_n(t)>r_\nu,\mbox{ eventually})=1$ taking into account that $ r_n(t)$ converges to $\nu r_0(t)$, almost surely. According to Lemma \ref{debemosescribirlo5}, it is verified that
$$\exists n_0\in\mathbb{N}\mbox{ such that } r_0(t+2D_n)\geq m/k,\mbox{ }\forall n\geq n_0.$$Since $0<r_\nu<m/k$,
$$r_0(t+2D_n)\geq m/k>r_\nu,\mbox{ }\forall n\geq n_0.$$
Then, since $G^+(t)=G(t+2D_n)\subset G(t)$ for all $n$,
\[G^+(t) \in\mathcal{G}_{G(t)}(r_\nu),\mbox{ }\forall n\geq n_0.\qedhere\]

 $ $

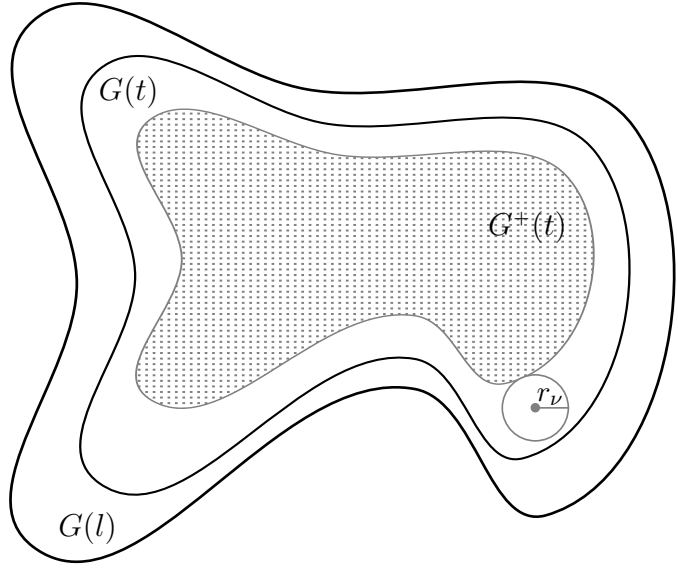
\begin{figure}[h!]\vspace{-1cm}\centering
\begin{pspicture}(1,-0.5)(10,8.5)
\rput(-1.15,0.75){\scalebox{1.45}[1.4]{\psccurve[showpoints=false,fillstyle=solid,fillcolor=white,linecolor=black,linewidth=0.25mm,linearc=3](1.5,0)(1.9,2.5)(1.5,5)(4,4.35)(7,4)(6.2,.3)(5,1.5)}}
\rput(.1,1.6){\scalebox{1.2}[1.1]{\psccurve[showpoints=false,fillstyle=solid,fillcolor=white,linecolor=black,linewidth=0.25mm,linearc=3](1.5,0)(1.9,2.5)(1.5,5)(4,4.35)(7,4)(6.2,.3)(5,1.5)}}
\rput(1.1,2.7){\scalebox{1}[.75]{\psccurve[showpoints=false,fillstyle=solid,fillcolor=white,linecolor=gray,linewidth=0.25mm,linearc=3,fillstyle=crosshatch*,fillcolor=gray,hatchcolor=white,hatchwidth=1.8pt,hatchsep=1pt,hatchangle=0](1.5,0)(1.9,2.5)(1.5,5)(4,4.35)(7,4)(6.2,.3)(5,1.5)}}
\rput(2.3,6.8){$G(t)$}
\rput(7.6,5.00){$G^+(t)$}
\rput(1.75,1.){$G(l)$}
\pscircle[showpoints=false,fillstyle=solid,fillcolor=white,linecolor=gray,linewidth=0.22mm,linearc=3 ](7.7,2.6){.45}
\psdots[linecolor=gray](7.7,2.6)
\psline[linecolor=gray,linewidth=0.22mm](7.7,2.6)(8.15,2.6)
\rput(7.9,2.75){\small{$r_\nu$}}
\end{pspicture}\vspace{-.3cm}\caption{Elements in Lemma \ref{debemosescribirlo55}. $G(t)$, $G(l)$ and $G^+(t)$. A ball of radius $r_\nu$ (gray color) rolls freely in $\overline{{G^+(t)}^c}$.}\label{oookffghhhg}
\end{figure}
\end{proof}

In Lemma \ref{dddd}, it will proved that, given the threshold $t$, the set $\mathcal{X}_{n}^{G^+} $ is eventually contained in $\mathcal{X}_{n}^+(t)$.

\begin{lemma}\label{dddd}Let $G(t)$ be a compact, nonempty and nonconvex level set. Under assumptions (A), (D) and (K), let $\mathcal{X}_n$ be a random sample generated from a distribution with density function $f$, let $\mathcal{X}_n^+(t)$ be established in Definition \ref{jejejeje} and let $\mathcal{X}_{n}^{G^+}$ be the subsample defined in Definition \ref{auximuestra}. Then,
$$\mathbb{P}(\mathcal{X}_{n}^{G^+}\subset \mathcal{X}_{n}^+(t),   \mbox{ eventually})=1.$$
\end{lemma}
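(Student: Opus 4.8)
The plan is to reduce the statement to the uniform consistency of the kernel estimator on a suitable fixed compact set, mimicking the proof of Proposition~\ref{mostramaisenGlambda}. By definition, $X_i\in\mathcal{X}_{n}^{G^+}$ means $f(X_i)\ge t+2D_n$, whereas $X_i\in\mathcal{X}_{n}^{+}(t)$ means $f_n(X_i)\ge t+D_n$. Hence it suffices to show that, with probability one and for $n$ large enough, every sample point with $f(X_i)\ge t+2D_n$ satisfies $f_n(X_i)\ge t+D_n$.

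First I would fix the scale. Since $D_n\to 0$, the argument of Lemma~\ref{debemosescribirlo5} gives some $n_0$ with $l<t+2D_n<u$ for all $n\ge n_0$; consequently $G^+(t)=G(t+2D_n)\subset G(l)$ for $n\ge n_0$, which lets us work inside a region that no longer depends on $n$. Next, I would invoke Proposition~\ref{enPruebaTeorema3Walther} to obtain the compact set $C$ with $G(l)\setminus\interior(G(u))\subset C$ together with the bound $\sup_{C}|f_n-f|\le N(\log{n}/n)^{p/(d+2p)}$ almost surely for some constant $N>0$, and I would take the constant $M$ in assumption~(D) with $M\ge N$, which is admissible since (D) only requires $M$ large enough.

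Then the proof splits according to whether a given point $X_i\in\mathcal{X}_{n}^{G^+}$ lies in $C$ or in $C^c$. If $X_i\in C$, then $|f_n(X_i)-f(X_i)|\le\sup_{C}|f_n-f|\le N(\log{n}/n)^{p/(d+2p)}\le D_n$, so $f_n(X_i)\ge f(X_i)-D_n\ge t+2D_n-D_n=t+D_n$, i.e.\ $X_i\in\mathcal{X}_{n}^{+}(t)$. If $X_i\in C^c$, then since $X_i\in G^+(t)\subset G(l)$ and $G(l)\setminus\interior(G(u))\subset C$ we get $X_i\in\interior(G(u))$, hence $f(X_i)>u$ and $X_i\in G(u)\cap C^c$; by Proposition~\ref{enPruebaTeorema3Walther3} there is $w>0$ such that, almost surely and for $n$ large, $f_n(z)\ge u+w/2$ for every $z\in G(u)\cap C^c$, and using $D_n\to 0$ together with $t\le u$ gives $f_n(X_i)\ge u+w/2\ge t+w/2>t+D_n$ for $n$ large, so again $X_i\in\mathcal{X}_{n}^{+}(t)$. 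Since $\mathcal{X}_{n}^{G^+}$ is finite for each $n$, combining the two cases over all its points yields the claim.

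I do not expect a genuine obstacle here: the argument is essentially a repackaging of the ideas already used for Proposition~\ref{mostramaisenGlambda}. The only point needing a little care is that the auxiliary set $G^+(t)$ varies with $n$; this is dealt with by first trapping it inside the fixed level set $G(l)$ for all large $n$, after which the fixed compact set $C$ and the $n$-free constants $N$ and $w$ can be used uniformly.
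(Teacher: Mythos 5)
Your proposal is correct and follows essentially the same route as the paper's proof: the same case split on $X_i\in C$ versus $X_i\in C^c$, the same use of Proposition~\ref{enPruebaTeorema3Walther} with $M\ge N$ in the first case, and the same appeal to Proposition~\ref{enPruebaTeorema3Walther3} together with $D_n\to 0$ in the second. Your extra remark trapping $G^+(t)$ inside $G(l)$ for large $n$ just makes explicit a step the paper uses implicitly.
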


\begin{proof}Let $X_i\in \mathcal{X}_{n}^{G^+}$. Therefore, $f(X_i)\geq t+2D_n$. According to Proposition \ref{enPruebaTeorema3Walther},
$$\sup_{C}|f_n-f|=O\left(  \left(  \frac{\log{n}}{n}\right)^{p/(d+2p)}\right), \mbox{ almost surely.}$$where $C\subset U$ is under conditions of Proposition \ref{enPruebaTeorema3Walther}. So, with probability one and for $n$ large enough,
\begin{equation}\label{gt}
 \exists N>0 \mbox{ such that }\sup_{C}|f_n-f|\leq N  \left(  \frac{\log{n}}{n}\right)^{p/(d+2p)} .
\end{equation}Two cases are considered: $X_i$ belongs to $C$ or $X_i$ does not belong to $C$.
\begin{enumerate}
  \item  Let $X_i\in C $. According to (\ref{gt}), if $M\geq N$,
  $$|f_n(X_i)-f(X_i)|\leq D_n .$$Therefore,
  $$f_n(X_i)\geq f(X_i)-D_n\geq t +2D_n -D_n=t+D_n.$$

  \item If $X_i\notin C $ then $X_i\in G(u)\cap C^c$ since $X_i\in G(l)$ and $G(l)\setminus \interior(G(u))\subset C$. According to Proposition \ref{enPruebaTeorema3Walther3}, with probability one and for $n$ large enough, $f_n(X_i)\geq u+ v/2$ for some $v>0$. In addition, since $D_n$ converges to zero,
      $$\exists n_0\in\mathbb{N}\mbox{ such that }2D_n< \frac{v}{2},\mbox{ }\forall n\geq n_0.$$Then, with probability one and for $n$ large enough,
\[f_n(X_i)\geq u+ \frac{v}{2}\geq t+\frac{v}{2}\geq t+D_n. \qedhere\]
\end{enumerate}\end{proof}

According to Lemma \ref{dddd}, it is verified that $C_{r_n(t)}(\mathcal{X}_{n}^{G^+})\subset C_{r_n(t)}(\mathcal{X}_{n}^+(t)$). That is, $\mathcal{X}_{n}^+(t)$ is at least as good as $\mathcal{X}_{n}^{G^+}$ in order to estimate $G(t)$. Remember that  $\mathcal{X}_{n}^{G^+}$ is constructed from $f$. It does not depend on the kernel estimator $f_n$. In addition, $\mathcal{X}_{n}^{G^+}$ would be the natural sample for estimating $G^+(t)$. Then, let $r_\nu$ be a positive constant under the conditions in Lemma \ref{debemosescribirlo55} and let $r>0$ such that $0<r\leq r_\nu$. Let $\epsilon_n=\left(\frac{C\log{n}}{n}\right)^{\frac{2}{d+1}}$ where $C>0$ denotes a big enough constant to be established later. Since $\lim_{n\rightarrow\infty}\epsilon_n=0$,
$$\exists n_0\in\mathbb{N}\mbox{ such that }0<\epsilon_n<\max\left\{\frac{r}{3},1\right\},\mbox{ }\forall n\geq n_0.$$On the other hand,
$$\exists n_1\in\mathbb{N}\mbox{ such that }r-2\epsilon_n\geq r/2,\mbox{ }\forall n\geq n_1.$$
According to Proposition \ref{enPruebaTeorema3Walther5}, if $f\geq b>0$,
$$\mathbb{P}(A\oplus B_{r-3\epsilon_n}[0]\nsubset \left[\left(A\cap\mathcal{X}_n\right)\oplus B_r[0]\right]\mbox{ for some }A\in \mathcal{G}_{G(l)}(r_\nu))$$
$$\leq D(\epsilon_n, G(l)\oplus B_r[0])D\left(\frac{\epsilon_n}{10r},S^{d-1}\right)\exp\left\{-nab(r-2\epsilon_n)^\frac{d-1}{2}(\epsilon_n/2)^{\frac{d+1}{2}}\right\},$$where $D(\epsilon,B)=\max\{card\mbox{ } V:V\subset B,\mbox{ }|x-y|>\epsilon\mbox{ for different }x,y\in V\}$, $S^{d-1}$ denotes the unit sphere in $\mathbb{R}^d$ and $a$ is a dimensional constant. Therefore, if $n\geq \max\{n_0,n_1\}$ then
$r-2\epsilon_n\geq r/2$ and
$$\mathbb{P}(A\oplus B_{r-3\epsilon_n}[0]\nsubset \left[\left(A\cap\mathcal{X}_n\right)\oplus B_r[0]\right]\mbox{ for some }A\in \mathcal{G}_{G(l)}(r_\nu))$$
$$\leq Q \epsilon_n^{-d} \epsilon_n^{-(d-1)}\exp\left\{-nab\left(\frac{r}{2}\right)^{\frac{d-1}{2}}\left(\frac{C\log{n}}{2^{(d+1)/2}n}\right)\right\}=Q\epsilon_n^{(-2d+1)}\exp\{-W\log{n}\}$$with $Q$ is a constant depending on $r$ and the dimension $d$ and $W=\frac{ab}{2^{(d+1)/2}}\left(\frac{r}{2}\right)^{\frac{d-1}{2}}C.$ If $C$ tends to infinite then $W$ tends to it too. Then, given $Q>0$
$$\exists n_2\in\mathbb{N}\mbox{ such that }\exp\{-W\log{n}\}\leq Q,\mbox{ }\forall n\geq n_2.$$Therefore,
$$\mathbb{P}(A\oplus(r-3\epsilon_n)B_1[0]\nsubset \left[\left(A\cap\mathcal{X}_n\right)\oplus B_r[0]\right]\mbox{ for some }A\in \mathcal{G}_{G(l)}(r_\nu))$$
$$\leq Q^2\epsilon_n^{-(2d-1)}\leq Q^2\left(\frac{n}{\log{n}}\right)^{\frac{(2d-1)(d+1)}{2}}n^{-M},\mbox{ }\forall n\geq\max\{n_0,n_1,n_2\}.$$If $W>\frac{(2d-1)(d+1)}{2}$ it is verified that
$$\sum_{i=1}^\infty \left(\frac{n}{\log{n}}\right)^{\frac{(2d-1)(d+1)}{2}}n^{-W}<\infty.$$So,
$$\mathbb{P}(A\oplus B_{r-3\epsilon_n}[0]\nsubset \left[\left(A\cap\mathcal{X}_n\right)\oplus B_r[0]\right]\mbox{ for some }A\in \mathcal{G}_{G(l)}(r_\nu),\mbox{ infinitely often})=0.$$
Then, with probability one,
$$\exists n_3\in\mathbb{N}\mbox{ such that }A\oplus B_{r-3\epsilon_n}[0]\subset  \left(A\cap\mathcal{X}_n\right)\oplus B_r[0] ,\mbox{ }\forall A\in\mathcal{G}_{G(l)}(r_\nu)\mbox{ and }\forall n\geq n_3.$$According to Lemma \ref{debemosescribirlo55}, for $n$ large enough, $G^+(t)\in \mathcal{G}_{G(l)}(r_\nu)$. So, for $n$ large enough, with probability one,
$$(G^+(t)\oplus B_{r-3\epsilon_n}[0])\ominus B_r[0]\subset (\mathcal{X}_n^{G^+}\oplus B_r[0])\ominus B_r[0]       =C_r(\mathcal{X}_n^{G^+}).$$
Since $G^+(t)$ is $(r-3\epsilon_n)-$convex because $r-3\epsilon_n\leq r_\nu$, it is satisfied that
$$(G^+(t)\oplus B_{r-3\epsilon_n}[0])\ominus B_r[0]=$$
$$=(G^+(t)\oplus  B_{r-3\epsilon_n}[0])\ominus( B_{r-3\epsilon_n}[0]\oplus B_{3\epsilon_n}[0])$$
$$=(G^+(t)\oplus  B_{r-3\epsilon_n}[0]\ominus B_{r-3\epsilon_n}[0])\ominus  B_{3\epsilon_n}[0]=G^+(t)\ominus  B_{3\epsilon_n}[0].$$
Therefore, since $r_\nu>r>0$,
$$\exists n_4\in\mathbb{N}\mbox{ such that }G^+(t)\ominus  B_{3\epsilon_n}[0]\subset C_{r}(\mathcal{X}_n^{G^+})\subset   C_{r_\nu}(\mathcal{X}_n^{G^+}),\mbox{ } \forall n\geq n_4.$$According the Lemma \ref{debemosescribirlo55},
$$\exists n_5\in\mathbb{N}\mbox{ such that }r_n(t)\geq r_\nu,\mbox{ } \forall n\geq n_5.$$
Then,
$$G^+(t)\ominus  B_{3\epsilon_n}[0]\subset C_{r_n(t)}(\mathcal{X}_n^{G^+}),\mbox{ } \forall n\geq \max\{n_4, n_5\}.$$
Therefore, since $\mathcal{X}_n^{G^+}\subset\mathcal{X}_n^+(t)\subset G(t) $ and $r_n(t)\leq r_0(t)$, it is verified
\begin{equation}\label{900}
G^+(t)\ominus   B_{3\epsilon_n}[0]\subset C_{r_n(t)}(\mathcal{X}_n^+(t))\subset C_{r_0(t)}(G(t))=G(t),\mbox{ }\forall n\geq \max\{n_4,n_5\}.
\end{equation}
Using (\ref{900}), with probability one and for $n$ large enough,
$$d_H(C_{r_n(t)}(\mathcal{X}_n^+(t)),G(t))\leq d_H(G^+(t)\ominus B_{3\epsilon_n}[0],G(t)).$$By the triangle inequality,
\small{\begin{equation}\label{juioplr}
    d_H(C_{r_n(t)}(\mathcal{X}_n^+(t)),G(t)) \leq d_H(G^+(t),G(t))+d_H(G^+(t),G^+(t)\ominus   B_{3\epsilon_n}[0]).
\end{equation}
}
Since $\lim_{n\rightarrow \infty}D_n=0$,
$$\exists n_6\in\mathbb{N}\mbox{ such that }2D_n<\min\left\{(m/2)c,\frac{\zeta}{2}\right\},\mbox{ }\forall n\geq n_6.$$
According to Proposition \ref{enPruebaTeorema3Walther4},$$G(t)\subset  G^+(t)\oplus B_{\frac{4}{m}D_n}[0],\mbox{ }\forall n\geq n_6.$$
Since $G^+(t)\subset G(t)$, $d_H(G^+(t),G(t))=O(D_n)$. On the other hand,
$$G^+(t)\ominus  B_{3\epsilon_n}[0]\subset G^+(t)\subset G^+(t)\oplus  B_{3\epsilon_n}[0].$$Therefore,
$d_H(G^+(t),G^+(t)\ominus  B_{3\epsilon_n}[0])=O(\epsilon_n)$. As consequence, using (\ref{juioplr})
\[ d_H(C_{r_n(t)}(\mathcal{X}_n^+(t)),G(t))=O(\max\{ D_n,\epsilon_n \}), \mbox{ almost surely.}  \]  \hfill $\Box$

\section{Appendix}\label{apendix}
Many proofs in Section \ref{mainresults} take into account mathematical aspects considered in Walther (1997). Next, we will summarize these theoretical results. In particular, Proposition \ref{enPruebaTeorema3Walther} can be obtained directly from proof of Theorem 3 in Walther (1997). It guarantees the existence of a compact set  $C$ where the convergence rate for the density kernel estimator is established.

\begin{proposition}\label{enPruebaTeorema3Walther}Under assumptions (A) and (K), there exists $\upsilon>0$ and a compact set $C$ verifying that $$G(l)\setminus \interior (G(u))\oplus  B_{\upsilon}[0]\subset U$$and$$G(l)\setminus \interior (G(u))\oplus B_{\frac{\upsilon}{2}}[0]\subset C.$$In addition,
\begin{equation*}
\sup_{C}|f_n-f| =O\left(  \left(  \frac{\log{n}}{n}\right)^{p/(d+2p)}\right),\mbox{ almost surely}.
\end{equation*}

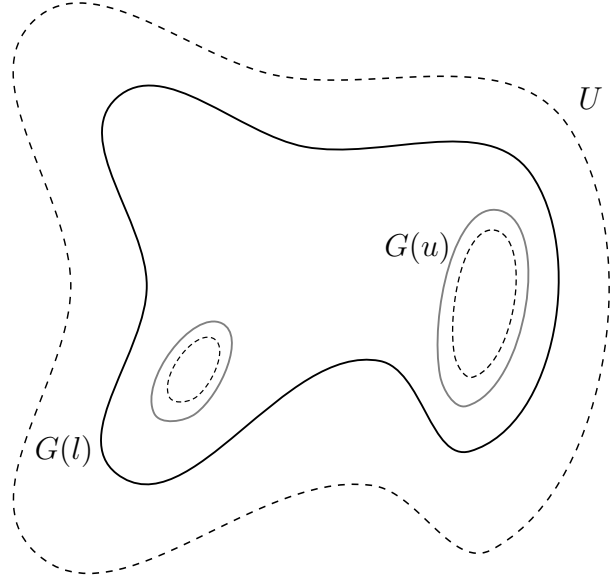
\begin{figure}[h!]\centering
\begin{pspicture}(-2,-1.95)(10,7)

\rput(-1.58,-1.){\scalebox{1.3}[1.4]{\psccurve[showpoints=false,fillstyle=solid,fillcolor=white,linecolor=black,linewidth=0.15mm,linearc=3,linestyle=dashed,dash=2pt 2pt](1.5,0)(1.9,2.5)(1.5,5)(4,4.5)(7,4)(6.2,-0.01)(5,0.6)}}
\psccurve[showpoints=false,fillstyle=solid,fillcolor=white,linecolor=black,linewidth=0.25mm,linearc=3](1.5,0)(1.9,2.5)(1.5,5)(4,4.35)(7,4)(6.2,.3)(5,1.5)

\psccurve[showpoints=false,fillstyle=solid,fillcolor=white,linecolor=gray,linewidth=0.25mm,linearc=3](5.8,1.3)(6.6,3.5)(6.15,0.9)

\rput(1.93,0.61){\scalebox{0.7}[0.75]{\psccurve[showpoints=false,fillstyle=solid,fillcolor=white,linecolor=black,linewidth=0.25mm,linearc=3,linestyle=dashed,dash=3pt 2pt](5.8,1.3)(6.6,3.5)(6.15,0.9) }}
\psccurve[showpoints=false,fillstyle=solid,fillcolor=white,linecolor=gray,linewidth=0.25mm,linearc=3](2,0.8)(2.9,2)(2.55,0.8)

\rput(0.9,0.5){\scalebox{0.65}[0.65]{\psccurve[showpoints=false,fillstyle=solid,fillcolor=white,linecolor=black,linewidth=0.25mm,linearc=3,linestyle=dashed,dash=3pt 2pt](2,0.8)(2.9,2)(2.55,0.8)  }}

\rput(5.5,3){$G(u)$}
\rput(7.8,5.00){$U$}
\rput(0.8,0.3){$G(l)$}
\end{pspicture}\vspace{-.3cm}\caption{Elements in Proposition \ref{enPruebaTeorema3Walther}. $G(u)$ in gray, $G(l)$ in black and the open set $U$ in dashed line.}\label{oookffghhffffffffffffffffffffffhg1o}
\end{figure}
\end{proposition}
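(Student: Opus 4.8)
The plan is to reassemble the two ingredients that the statement packages: a purely geometric construction of $\upsilon$ and $C$, and the classical almost sure uniform rate for kernel density estimators on a compact set strictly inside $U$. Both appear in the proof of Theorem~3 in Walther (1997), and I would carry them out in that order.

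First I would handle the geometry. Put $E=G(l)\setminus\interior(G(u))$. Under (A) the set $G(l)$ is closed and contained in the bounded set $\overline{G(l-\zeta)}$, hence compact; since $\interior(G(u))$ is open, $E=G(l)\cap\big(\interior(G(u))\big)^c$ is compact. Because $u+\zeta>u$ gives $\interior(G(u+\zeta))\subset\interior(G(u))$ while $G(l)\subset\overline{G(l-\zeta)}$, one obtains $E\subset\overline{G(l-\zeta)}\setminus\interior(G(u+\zeta))\subset U$. As $E$ is compact and $U$ is open and bounded (so $U^c\neq\emptyset$), the distance $\delta:=d(E,U^c)$ is strictly positive, and any $\upsilon\in(0,\delta)$ satisfies $E\oplus B_{\upsilon}[0]\subset U$. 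I would then take $C:=E\oplus B_{\upsilon/2}[0]$: this set is compact as a Minkowski sum of compact sets, it contains $E\oplus B_{\upsilon/2}[0]$ trivially, and $C\oplus B_{\upsilon/2}[0]=E\oplus B_{\upsilon}[0]\subset U$, so both displayed inclusions hold and $C$ admits a tube of radius $\upsilon/2$ around it still contained in $U$.

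Next I would prove $\sup_{C}|f_n-f|=O\big((\log n/n)^{p/(d+2p)}\big)$ almost surely through the bias--variance split $f_n-f=(f_n-\mathbb{E}f_n)+(\mathbb{E}f_n-f)$. For the bias term: by (K) the kernel has bounded support, so for a fixed $\rho>0$ the averaging window around any $x\in C$ lies in the ball $B_{\rho h}[x]$, where $h$ abbreviates the bandwidth order; once $h$ is small enough (which holds eventually, since (K) prescribes $h\asymp(\log n/n)^{1/(d+2p)}$) this ball is contained in $C\oplus B_{\upsilon/2}[0]=E\oplus B_{\upsilon}[0]\subset U$, so, using $f\in\mathcal{C}^p(U)$ and the fact that $K$ has order at least $p$, a $p$-th order Taylor expansion of $f$ on that neighbourhood gives $\sup_{C}|\mathbb{E}f_n-f|=O(h^p)$. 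For the stochastic term: the finite variation of $K$ assumed in (K) makes $\{K_H(x-\cdot):x\in C\}$ a uniformly bounded VC-type class of functions, so the standard exponential maximal inequalities for kernel density estimators together with the Borel--Cantelli lemma yield $\sup_{C}|f_n-\mathbb{E}f_n|=O\big(\sqrt{\log n/(n h^d)}\,\big)$ almost surely. Substituting $h\asymp(\log n/n)^{1/(d+2p)}$ makes both terms of exact order $(\log n/n)^{p/(d+2p)}$, completing the argument.

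The geometric step is routine. The genuine obstacle is the stochastic bound, where one must control the supremum over $x\in C$ of an empirical process indexed by translates of a kernel whose bandwidth shrinks with $n$, and then upgrade convergence in probability to an almost sure rate via an exponential tail estimate. Since this is exactly what is done in Walther (1997, proof of Theorem~3) building on the classical theory of kernel density estimation under hypotheses of type (K), the cleanest route is to cite that argument directly rather than redo the maximal inequalities for VC-type function classes.
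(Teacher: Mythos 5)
Your proposal is correct and follows essentially the same route as the paper: the paper simply invokes the proof of Theorem~3 in Walther (1997) to obtain $\upsilon$ and the compact set $C$, quotes the two bounds $\sup_C|f_n-K\ast f|$ and $\sup_C|K\ast f-f|$ (Walther's equations (13)--(14), resting on Stute (1984)), and concludes by the triangle inequality --- exactly your bias--variance split with the same bandwidth balancing. The only difference is that you make explicit the compactness/distance argument producing $\upsilon$ and $C=E\oplus B_{\upsilon/2}[0]$, which the paper leaves implicit in the citation.
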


\begin{proof}According to the proof of Theorem 3 in Walther (1997), one can find $\upsilon>0$ such that $ G(l)\setminus \interior (G(u))\oplus B_{ \upsilon}[0]\subset U$, see Figure \ref{oookffghhffffffffffffffffffffffhg1o}. In addition, the kernel $K$ satisfies the assumptions in Theorem 3.1 in Stute (1984). Following Walther (1997), one can prove that there exists a compact set $C$ such that $G(l)\setminus \interior (G(u))\oplus  B_{\upsilon/2}[0]\subset C$ and such that if $h_n$ is a sequence of the order $(\log{n}/n)^{1/(d+2p)}$ then it is verified that
\begin{equation}\label{13} \sup_{C}|f_n-K\ast f|=O\left(n^{-p/(d+2p)}\right),\mbox{ almost surely},\end{equation}
\begin{equation}\label{14} \sup_{C}|K\ast f-f|=O\left(h_n^p\right),\end{equation}where we write $K\ast f$ for $\int h_n^{-d}K((\cdot-x)/h_n)f(x)\mbox{ } dx$.
Equations (\ref{13}) and (\ref{14}) correspond to equations (13) and (14) in Walther (1997). By the triangle inequality, we can guarantee that, almost surely,
$$\sup_{C}|f_n-f|=O\left( \max\left\{ \left(\frac{\log{n}}{n}\right)^{p/(d+2p)},n^{-p/(d+2p)}\right \} \right)$$
\[=O\left(  \left(  \frac{\log{n}}{n}\right)^{p/(d+2p)}\right) .\qedhere\]\end{proof}

Proposition \ref{enPruebaTeorema3Walther3} corresponds to equation (15) in Walther (1997). The behavior of the kernel density estimator is studied in the complement of the compact set $C$.

\begin{proposition}\label{enPruebaTeorema3Walther3}Let $C$ the compact set in Proposition \ref{enPruebaTeorema3Walther}. Under assumptions (A) and (K), there exists $w>0$ verifying that
\begin{equation*}
 \mathbb{P}\left( \inf_{G(u)\cap C^c} f_n(x)>u+\frac{w}{2},\mbox{ eventually}\right)=1
\end{equation*}and\begin{equation*}
\mathbb{P}\left( \sup_{G(l)^c \cap C^c } f_n(x)<l-\frac{w}{2},\mbox{ eventually} \right)=1.
\end{equation*}

\end{proposition}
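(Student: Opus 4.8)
The plan is to separate a purely deterministic fact about $f$ from the stochastic behaviour of $f_{n}$, in the spirit of equation (15) in Walther (1997). Write $A=G(l)\setminus\interior(G(u))=\{l\le f\le u\}$, and let $\upsilon>0$ and $C$ be as in Proposition \ref{enPruebaTeorema3Walther}, so that $A\oplus B_{\upsilon/2}[0]\subset C$ and hence every $x\in C^{c}$ satisfies $d(x,A)>\upsilon/2$. First I would establish the deterministic claim that there is $w_{0}>0$ with
\[
\{x:\ d(x,A)>\upsilon/4\}\ \subseteq\ \{f<l-w_{0}\}\ \cup\ \{f>u+w_{0}\},
\]
both right-hand sets being open. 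This holds because, once $w_{0}<\zeta$, the two collars $\{l-w_{0}\le f<l\}$ and $\{u<f\le u+w_{0}\}$ are bounded subsets of $\overline{G(l-\zeta)}\setminus\interior(G(u+\zeta))\subset U$, and from each of their points one reaches the level set $\{f=l\}\cup\{f=u\}\subset A$ within distance of order $w_{0}/m$ because $|\nabla f|\ge m$ on $U$; choosing $w_{0}$ small enough (in particular $w_{0}\le m\upsilon/4$) puts these collars, and $A$ itself, inside $\{d(\cdot,A)\le\upsilon/4\}$, which is the claim. The openness of $\{f>u+w_{0}\}$ and $\{f<l-w_{0}\}$ follows since $|\nabla f|\ge m>0$ makes $\{f=u+w_{0}\}$ and $\{f=l-w_{0}\}$ smooth hypersurfaces lying inside $U$.

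The second step passes to $f_{n}$. Let $R>0$ with $\operatorname{supp}K\subset B_{R}[0]$ and set $r_{n}=R\sqrt{\lambda_{\max}(H)}$, which by (K) is of order $(\log n/n)^{1/(d+2p)}\to 0$; pick $n_{0}$ with $r_{n}<\upsilon/4$ for $n\ge n_{0}$, and note $K_{H}(x-\cdot)$ vanishes outside $B_{r_{n}}[x]$. Fix $x\in G(u)\cap C^{c}$ and $n\ge n_{0}$: every $y\in B_{r_{n}}[x]$ has $d(y,A)\ge d(x,A)-r_{n}>\upsilon/4$, so $B_{r_{n}}[x]$ is contained in the disjoint open union $\{f<l-w_{0}\}\cup\{f>u+w_{0}\}$; being connected and containing $x$, for which $f(x)\ge u>l-w_{0}$ and therefore $f(x)>u+w_{0}$, it lies entirely in $\{f>u+w_{0}\}$. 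Since $K_{H}$ integrates to one, $\mathbb{E}f_{n}(x)=\int_{B_{r_{n}}[x]}K_{H}(x-y)f(y)\,dy\ge u+w_{0}$ for a nonnegative kernel (the higher-order case is handled as in Walther (1997), bounding the bias through the $\mathcal{C}^{p}$-smoothness of $f$ on the relevant compact subsets of $U$), and symmetrically $\mathbb{E}f_{n}(x)\le l-w_{0}$ for $x\in G(l)^{c}\cap C^{c}$. Finally, because $K$ has bounded support and finite variation, the uniform strong law already invoked in the proof of Proposition \ref{enPruebaTeorema3Walther}, see Stute (1984), gives $\sup_{x\in\mathbb{R}^{d}}|f_{n}(x)-\mathbb{E}f_{n}(x)|=O\bigl((\log n/n)^{p/(d+2p)}\bigr)$ almost surely, so with probability one $|f_{n}-\mathbb{E}f_{n}|<w_{0}/2$ on all of $\mathbb{R}^{d}$ for $n$ large. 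Combining, eventually $\inf_{G(u)\cap C^{c}}f_{n}\ge u+w_{0}/2$ and $\sup_{G(l)^{c}\cap C^{c}}f_{n}\le l-w_{0}/2$; taking $w=w_{0}$ and intersecting the two probability-one events proves both assertions.

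The genuine obstacle is that the infimum and supremum run over $C^{c}$, which is not contained in $C$ --- indeed $G(l)^{c}\cap C^{c}$ is not even bounded --- so Proposition \ref{enPruebaTeorema3Walther}, which only controls $f_{n}-f$ on the compact set $C$, cannot be used directly. The two devices that get around this are the bounded support of $K$, which confines $\mathbb{E}f_{n}(x)$ to an integral of $f$ over a shrinking ball about $x$ on which the deterministic estimate of the first step bites, and a genuinely global-in-$x$ uniform law of large numbers for $f_{n}-\mathbb{E}f_{n}$, valid under the regularity imposed on $K$ in (K); the connectedness argument pinning $B_{r_{n}}[x]$ inside a single component of $\{f<l-w_{0}\}\cup\{f>u+w_{0}\}$ is the small point that makes the localisation work.
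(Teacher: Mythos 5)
The paper does not actually prove this proposition: it is imported as equation (15) from Walther (1997), so what you have written is a reconstruction of an omitted argument, and your architecture is the right one (and, as far as one can tell, Walther's): a deterministic statement that away from the shell $A=G(l)\setminus\interior(G(u))$ the density is either above $u+w_0$ or below $l-w_0$; localisation of $\mathbb{E}f_n(x)$ to a shrinking ball via the bounded support of $K$; and a global uniform law for $f_n-\mathbb{E}f_n$ from Stute (1984), which is indeed the only usable stochastic input here since Proposition \ref{enPruebaTeorema3Walther} controls $f_n-f$ only on the compact set $C$. Two steps are not justified as written, though. First, the openness of $\{f>u+w_0\}$ and $\{f<l-w_0\}$, on which your connectedness argument rests, does not follow from the smoothness of the level surfaces: assumption (A) makes $f$ of class $\mathcal{C}^p$ only on $U$, and the sets $G(u)\cap C^c$ and $G(l)^c\cap C^c$ need not lie in $U$ (they sit largely inside $\interior(G(u+\zeta))$, respectively outside $\overline{G(l-\zeta)}$), where $f$ need not even be continuous; at such points $\{f>u+w_0\}$ may fail to be a neighbourhood. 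The conclusion you need is nevertheless true and is better reached without connectedness: for $x\in G(u)\cap C^c$ one has $d(x,\partial G(u))\geq d(x,A)>\upsilon/2$ because $\partial G(u)\subset\{f=u\}\subset A$ (continuity of $f$ holds near that level), hence $B_{\upsilon/2}(x)\subset G(u)$, so every $y\in B_{r_n}[x]$ satisfies $B_{\upsilon/4}[y]\subset G(u)$, and Proposition \ref{enPruebaTeorema3Walther4} gives $G(u)\ominus B_{\upsilon/4}[0]\subset G(u+w_0)$ once $(2/m)w_0\leq\upsilon/4$; the symmetric argument covers $G(l)^c\cap C^c$.

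Second, the parenthetical disposal of higher-order kernels is wrong where it matters: you propose to bound the bias "through the $\mathcal{C}^p$-smoothness of $f$ on the relevant compact subsets of $U$", but the points of $G(u)\cap C^c$ with $f>u+\zeta$ lie in $\interior(G(u+\zeta))$, precisely where (A) imposes no smoothness, so no expansion of $K\ast f-f$ is available there, and for a genuinely signed kernel the inequality $f\geq u+w_0$ on $B_{r_n}[x]$ alone does not yield $\mathbb{E}f_n(x)\geq u+w_0$. For a nonnegative kernel (the order-$2$, Gaussian-type kernels the paper actually uses, i.e.\ $p\leq 2$) your bound is immediate and the proof closes; for $p>2$ one needs an additional hypothesis on $f$ off $U$ or a different argument --- a caveat inherited from Walther's own treatment rather than a defect specific to your write-up. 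Apart from these two points the proof is sound.
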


Proposition \ref{enPruebaTeorema3Walther4} corresponds to Lemma 2 (b) in Walther (1997). It establishes some interesting relationships between level sets with close enough thresholds.

\begin{proposition}\label{enPruebaTeorema3Walther4}Under assumption (A), there exists a constant $c>0$ such that if $t_1$ and $t_2$ are such that $l\leq t_1<t_2\leq u$ and $t_2-t_1\leq (m/2)c$ then
\begin{equation*}
 G(t_1)\ominus B_{\frac{2}{m}(t_2-t_1)}[0]\subset G(t_2)\subset G(t_1)\ominus B_{\frac{1}{2m}(t_2-t_1)}[0]\end{equation*}and
\begin{equation*}
 G(t_2)\oplus B_{\frac{1}{2m}(t_2-t_1)}[0]\subset G(t_1)\subset G(t_2)\oplus B_{\frac{2}{m}(t_2-t_1)}[0].\end{equation*}
\end{proposition}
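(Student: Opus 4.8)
\emph{Proof proposal.} Assumption (A) is exactly Walther's (1997) hypothesis on $f$, so the plan is to reproduce the argument of his Lemma~2(b); I outline it. The geometric picture is that, at scales on which $|\nabla f|$ is comparable to $m$ near the annular region $G(l)\setminus\interior(G(u))=\{l\le f\le u\}$, the level set $\partial G(s)$ moves with $s$ at speed roughly $1/|\nabla f|$, so $\partial G(t_1)$ and $\partial G(t_2)$ sit at distance between $\tfrac{1}{2m}(t_2-t_1)$ and $\tfrac{2}{m}(t_2-t_1)$ of one another. The constant $c$ is chosen small (essentially proportional to the collar radius $\upsilon$ of Proposition~\ref{enPruebaTeorema3Walther}) so that every ball of radius $\le\tfrac{2}{m}(t_2-t_1)$ centred near $G(l)\setminus\interior(G(u))$ lies inside the compact set $C$ of that proposition, hence inside $U$; this is possible because $G(l)\setminus\interior(G(u))$ is compact and has a positive $\upsilon/2$-collar inside $C\subset U$.

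I would first establish $G(t_1)\subset G(t_2)\oplus B_{\frac{2}{m}(t_2-t_1)}[0]$ and $G(t_1)\ominus B_{\frac{2}{m}(t_2-t_1)}[0]\subset G(t_2)$ by integrating the reparametrised gradient field $\dot\gamma(\tau)=\nabla f(\gamma(\tau))/|\nabla f(\gamma(\tau))|^{2}$, which is well defined on $U$ because $|\nabla f|\ge m>0$ and $\nabla f$ is Lipschitz; along it $f(\gamma(\tau))=f(\gamma(0))+\tau$, with speed $1/|\nabla f|\le 1/m$, and while $f\in[l,u]$ the curve stays in $G(l)\setminus\interior(G(u))\subset U$. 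Running the flow forward from $x\in G(t_1)$ with $f(x)\le t_2$ up to level $t_2$ costs arc length $\le(t_2-t_1)/m$, so $x$ lies within $\tfrac{2}{m}(t_2-t_1)$ of $G(t_2)$ (points with $f(x)>t_2$ are already in $G(t_2)$), giving the first inclusion. Running it backward from any $x$ with $f(x)<t_2$ reaches level $t_1$ after arc length $<(t_2-t_1)/m$, producing $y\notin G(t_1)$ with $|x-y|<\tfrac{2}{m}(t_2-t_1)$; hence $B_{\frac{2}{m}(t_2-t_1)}[x]\subset G(t_1)$ forces $f(x)\ge t_2$, which is the second inclusion.

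Next I would prove $G(t_2)\oplus B_{\frac{1}{2m}(t_2-t_1)}[0]\subset G(t_1)$, equivalently (by the definition of $\ominus$) $G(t_2)\subset G(t_1)\ominus B_{\frac{1}{2m}(t_2-t_1)}[0]$. Fix $x\in G(t_2)$ and put $\rho=\tfrac{1}{2m}(t_2-t_1)$. If $B_\rho[x]$ does not meet $G(l)\setminus\interior(G(u))$, then $B_\rho[x]$ is connected, contains $x\in G(l)$ and misses $\partial G(u)$, so $B_\rho[x]\subset\interior(G(u))\subset G(t_1)$; otherwise $B_\rho[x]$ lies within $\tfrac{1}{m}(t_2-t_1)\le\upsilon/2$ of $G(l)\setminus\interior(G(u))$ and hence inside $C\subset U$, and for $y\in B_\rho[x]$ the mean value theorem gives $|f(x)-f(y)|\le\sup_{\xi\in[x,y]}|\nabla f(\xi)|\,|x-y|\le 2m\rho=t_2-t_1$, so $f(y)\ge f(x)-(t_2-t_1)\ge t_1$. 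In either case $B_\rho[x]\subset G(t_1)$. Together with the two inclusions of the previous paragraph (and the morphological restatement $G(t_2)\oplus B_\rho[0]\subset G(t_1)\Leftrightarrow G(t_2)\subset G(t_1)\ominus B_\rho[0]$) this completes both displayed chains.

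The hard part is the bookkeeping that keeps every flow line and segment inside $U$, where the gradient hypotheses are available — which is precisely what forces the range $t_2-t_1\le (m/2)c$ and requires isolating the points deep inside $G(u)$ as above — together with the local upper bound $|\nabla f|\le 2m$ on a neighbourhood of $\bigcup_{s\in[l,u]}\partial G(s)$ that is used in the mean value step and is what yields the clean slack factor $2$ rather than a ratio of gradient bounds. Since all these estimates are exactly Walther's, the most economical route in the write-up is to check that (A) coincides with his assumptions and then invoke Lemma~2(b) of Walther (1997) directly.
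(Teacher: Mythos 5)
Your reconstruction of the two \emph{outer} inclusions, $G(t_1)\ominus B_{\frac{2}{m}(t_2-t_1)}[0]\subset G(t_2)$ and $G(t_1)\subset G(t_2)\oplus B_{\frac{2}{m}(t_2-t_1)}[0]$, via the normalised gradient flow is sound: the flow $\dot\gamma=\nabla f/|\nabla f|^2$ is well defined because $|\nabla f|\ge m$ and $\nabla f$ is Lipschitz on $U$, it changes $f$ at unit rate with speed at most $1/m$, and it stays in the annulus $\{l-\zeta\le f\le u\}\subset U$ once $c$ is tied to the collar width, so the arc-length bookkeeping delivers the factor $2/m$ with room to spare. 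For the record, these two halves are the only parts of the proposition the paper ever uses (in the proofs of Proposition \ref{alberto5} and Theorem \ref{principal}), and the paper itself gives no proof at all of this proposition --- it is stated as a transcription of Lemma 2(b) of Walther (1997) --- so your closing recommendation to invoke Walther directly is exactly what the paper does.

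The genuine gap is in the two \emph{inner} inclusions with factor $\frac{1}{2m}$. Your mean-value step requires $\sup|\nabla f|\le 2m$ on a neighbourhood of the annulus, and you merely assert this ``local upper bound.'' It does not follow from assumption (A): (A) gives only the lower bound $|\nabla f|\ge m$ together with a Lipschitz condition for $\nabla f$, which bounds $|\nabla f|$ by some finite $M$ on the bounded set $U$ but in no way forces $M\le 2m$; nor does the smallness of $c$ help, since over a ball of radius $O(c)$ the Lipschitz condition only controls the \emph{oscillation} of $\nabla f$, not its value at the centre. The bound is not a technicality: if $|\nabla f(x)|=100m$ at some $x\in\partial G(t_2)$, then moving a distance $\frac{1}{2m}(t_2-t_1)$ in the direction $-\nabla f(x)$ decreases $f$ by roughly $50(t_2-t_1)$, so $B_{\frac{1}{2m}(t_2-t_1)}[x]\not\subset G(t_1)$ and the inclusion $G(t_2)\subset G(t_1)\ominus B_{\frac{1}{2m}(t_2-t_1)}[0]$ fails. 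The inner inclusions (which, as you correctly note, are equivalent to each other under the $\oplus$/$\ominus$ duality) can only be justified with $\frac{1}{2M}$, $M$ an upper bound for $|\nabla f|$ on $U$, in place of $\frac{1}{2m}$, or after adding an explicit upper gradient bound to (A). If you write the proof out rather than citing Walther, this constant must be repaired; everything else in your sketch, including the connectedness argument isolating the points deep inside $\interior(G(u))$, is fine.
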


Finally, Proposition \ref{enPruebaTeorema3Walther5} is presented. It corresponds to Lemma 3 in Walther (1997).

\begin{proposition}\label{enPruebaTeorema3Walther5}Let $K\subset\mathbb{R}^d$ be a compact set, $r > 0$ and let $\mathcal{X}_n$ be a i.i.d. sample generated from a distribution with density function $f$. Let $ \mathcal{G}_K(r)$ be the family of sets defined in Definition \ref{waltherG}.
\begin{enumerate}
  \item If $f \geq b > 0$ on $A\in \mathcal{G}_K(r)$ and $0 < \epsilon< \min\{\overline{r}/2 , r\}$ then
$$
\mathbb{P}\left(A\oplus B_{\overline{r}-2\epsilon}[0]\nsubset (A\cap\mathcal{X}_n)\oplus B_{\overline{r}}[0]\right)$$
$$\leq
D\left(\epsilon,A\oplus B_{\overline{r}}[0]\right) \exp{ \left( -nab \min\{\overline{r}-\epsilon,r\}^{(d-1)/2}\epsilon^{(d+1)/2} \right) }.
$$where$$D\left(\epsilon,A\oplus B_{\overline{r}}[0]\right)= \max\{card\mbox{ } V:V\subset A\oplus B_{\overline{r}}[0],\mbox{ }|x-y|>\epsilon\mbox{ for different }x,y\in N\}$$and $a$ is a dimensional constant.\vspace{.3mm}
  \item Further, if $f \geq b > 0$ on $K$, $0 < \epsilon< \min\{\overline{r}/3 , 1\}$ and $r\geq \overline{r}-2\epsilon$ then
$$
\mathbb{P}\left(A\oplus B_{\overline{r}-3\epsilon}[0]\nsubset (A\cap\mathcal{X}_n)\oplus B_{\overline{r}}[0] \mbox{ for some }A\in \mathcal{G}_K(r)\right)$$
$$\leq
D\left(\epsilon,K\oplus B_{\overline{r}}[0]\right)D\left(\frac{\epsilon}{10\overline{r}},S^{d-1}\right)\exp{\left(-nab(\overline{r}-2\epsilon)^{(d-1)/2}(\epsilon/2)^{(d+1)/2}\right)}
$$where $S^{d-1}$ denotes the unit sphere.
\end{enumerate}

\end{proposition}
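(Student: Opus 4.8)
The plan is to prove both bounds by the same covering-and-union-bound scheme. For each point $z$ that could fail to be covered, I would exhibit a small region $W\subseteq A\cap B_{\overline r}[z]$ on which $f\ge b$, so that $\{z\text{ not covered by }(A\cap\mathcal X_n)\oplus B_{\overline r}[0]\}\subseteq\{W\cap\mathcal X_n=\emptyset\}$; replace the continuum of relevant $z$ (and, for the second bound, of admissible $A$) by a finite net, paying its cardinality as a factor; and estimate each term by $\mathbb P(W\cap\mathcal X_n=\emptyset)=(1-\mathbb P(X_1\in W))^n\le e^{-nb\mu(W)}$. The volume of $W$ will come from the elementary estimate that a spherical cap of height $h\le\rho$ on a $d$-dimensional ball of radius $\rho$ has Lebesgue measure at least $c_d\,\rho^{(d-1)/2}h^{(d+1)/2}$ for a dimensional constant $c_d>0$ (integrate the $(d-1)$-volumes of the slices, the slice radius at depth $s$ being $\sqrt{2\rho s-s^2}\ge\sqrt{\rho s}$). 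Throughout I use that, since $A\in\mathcal G_K(r)$, a ball of radius $r$ rolls freely in $A$, hence $A=(A\ominus B_r[0])\oplus B_r[0]$, so every point of $A$ lies in some $B_r[p]\subseteq A$, and — when $q:=\pi_A(z)\in\partial A$ — the center $p$ of a rolling ball through $q$ is collinear with $q$ and $z$ (because $q$ is the point of $A\supseteq B_r[p]$ nearest $z$), whence $|p-z|\le r+d(z,A)$.

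For the first bound, fix $A\in\mathcal G_K(r)$ and take a maximal $\epsilon$-separated set $\{z_1,\dots,z_N\}\subseteq A\oplus B_{\overline r-2\epsilon}[0]$, so $N\le D(\epsilon,A\oplus B_{\overline r-2\epsilon}[0])\le D(\epsilon,A\oplus B_{\overline r}[0])$ and every point of $A\oplus B_{\overline r-2\epsilon}[0]$ lies within $\epsilon$ of some $z_j$. Choose $B_r[p_j]\subseteq A$ through $\pi_A(z_j)$ as above, so $|p_j-z_j|\le r+\overline r-2\epsilon$, and set $W_j:=B_r[p_j]\cap B_{\overline r-\epsilon}[z_j]\subseteq A$. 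The two balls defining $W_j$ overlap with width $w_j=r+(\overline r-\epsilon)-|p_j-z_j|\ge\epsilon$, so $W_j$ contains a cap of height $w_j/2$ on the smaller ball (radius $\min\{r,\overline r-\epsilon\}$), giving $\mu(W_j)\ge c_d\min\{\overline r-\epsilon,r\}^{(d-1)/2}(\epsilon/2)^{(d+1)/2}$. If the inclusion fails, say $B_{\overline r}[z_0]\cap A\cap\mathcal X_n=\emptyset$ for some $z_0\in A\oplus B_{\overline r-2\epsilon}[0]$, pick $z_j$ with $|z_0-z_j|\le\epsilon$; then $W_j\subseteq B_{\overline r-\epsilon}[z_j]\subseteq B_{\overline r}[z_0]$ and $W_j\subseteq A$, hence $W_j\cap\mathcal X_n=\emptyset$. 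A union bound over $j$ then gives exactly the stated estimate, with the dimensional constant $a:=c_d2^{-(d+1)/2}$ absorbing the factor $\tfrac12$ in the cap height.

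For the second bound the witness region must not depend on $z$, and the net must be uniform over $A$. Given an admissible $z$ (i.e. $d(z,A)\le\overline r-3\epsilon$) with $q=\pi_A(z)\in\partial A$ and $\eta$ the outward unit normal of $A$ at $q$ (unique by Lemma~\ref{olvido2}, as $A$ satisfies $(R_r^r)$), I put $R_0:=\overline r-2\epsilon\le r$, note $B_{R_0}[q-R_0\eta]\subseteq B_r[q-r\eta]\subseteq A$, and check by a direct computation of $|y-z|^2$ (of the same type as above) that the cap $R(q,\eta):=\{y\in B_{R_0}[q-R_0\eta]:\langle\eta,y-q\rangle\ge-\epsilon/2\}$ lies in $B_{\overline r}[z]\cap A$ for every $z$ with $d(z,A)\le\overline r-3\epsilon$, with $\mu(R(q,\eta))\ge c_d(\overline r-2\epsilon)^{(d-1)/2}(\epsilon/2)^{(d+1)/2}$; the gap between the admissible range $\overline r-3\epsilon$ of $d(z,A)$ and the cap height $\epsilon/2$ is precisely the slack left for discretization. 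Then I take a maximal $\epsilon$-separated set $\{q_i\}\subseteq K\oplus B_{\overline r}[0]$ and a maximal $(\epsilon/10\overline r)$-separated set $\{\eta_k\}\subseteq S^{d-1}$, and for admissible $(q,\eta)$ pick $q_i,\eta_k$ within those radii; a routine geometric estimate — using that the cap's transverse radius is of order $\sqrt{(\overline r-2\epsilon)\epsilon}$, which dominates both the translation error $\epsilon$ and the tilting error of order $\sqrt{(\overline r-2\epsilon)\epsilon}\cdot(\epsilon/10\overline r)$ since $\epsilon<\overline r/3$ — produces a fixed slightly shrunken cap $R'(q_i,\eta_k)$, defined only through the net indices, with $R'(q_i,\eta_k)\subseteq R(q,\eta)$ and $\mu(R'(q_i,\eta_k))$ still of order $(\overline r-2\epsilon)^{(d-1)/2}(\epsilon/2)^{(d+1)/2}$. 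Hence on the event of item (2) some $R'(q_i,\eta_k)\subseteq K$ misses $\mathcal X_n$; since $f\ge b$ on $K$, each such event has probability at most $e^{-nab(\overline r-2\epsilon)^{(d-1)/2}(\epsilon/2)^{(d+1)/2}}$, and the number of index pairs is at most $D(\epsilon,K\oplus B_{\overline r}[0])\,D(\epsilon/10\overline r,S^{d-1})$. (Points of $A$ lying more than $\overline r-2\epsilon$ from $\partial A$, if any, are covered by the argument of the first bound.)

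The routine parts are the collinearity/projection remark, the overlap-width and cap-volume estimates, and the final union bound; these I expect to go through smoothly. The genuinely delicate step is the uniform discretization in item (2): one must pin down a single finite family of test caps such that every witness cap $R(q,\eta)$ — for every admissible boundary point $q$ and outward normal $\eta$ of every $A\in\mathcal G_K(r)$ — contains one of them with no loss in the lower bound on its measure. Tracking how a thin cap deforms under simultaneous displacement of its anchor point and tilt of its axis is exactly what forces the direction net of mesh $\epsilon/10\overline r$ and the passage from $\overline r-2\epsilon$ to $\overline r-3\epsilon$ in the statement, so that is where I would spend the most care.
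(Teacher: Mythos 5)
The paper itself gives no proof of this proposition: it is quoted verbatim as Lemma~3 of Walther (1997), so your proposal is in effect a reconstruction of Walther's covering/union-bound argument rather than an alternative to anything in the paper. Your part~(1) is essentially sound: the collinearity of $p_j$, $\pi_A(z_j)$ and $z_j$ gives $|p_j-z_j|\le r+\overline{r}-2\epsilon$, the lens $B_r[p_j]\cap B_{\overline{r}-\epsilon}[z_j]$ has axial width at least $\epsilon$ and contains a cap of height at least $\epsilon/2$ of a ball of radius at least $\min\{\overline{r}-\epsilon,r\}$, and the union bound over a maximal $\epsilon$-separated subset of $A\oplus B_{\overline{r}-2\epsilon}[0]$, together with $\mathbb{P}(W_j\cap\mathcal{X}_n=\emptyset)\le e^{-nb\mu(W_j)}$, yields the stated bound with the factor $2^{-(d+1)/2}$ absorbed into $a$. (The fact that every point of $A$ lies in some $B_r[p]\subseteq A$ should be attributed to the two-sided condition $(R_r^r)$ defining $\mathcal{G}_K(r)$, i.e.\ to Theorem~1 of Walther (1997), rather than to inside rolling alone, but it is available here.)

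The step that fails as written is precisely the one you single out. You require a fixed set $R'(q_i,\eta_k)$ with $R'(q_i,\eta_k)\subseteq R(q,\eta)$ for \emph{every} admissible $(q,\eta)$ with $|q-q_i|\le\epsilon$ and $|\eta-\eta_k|\le\epsilon/(10\overline{r})$; no nonempty such set exists. Indeed $R(q,\eta)$ is a cap of height $\epsilon/2$ anchored at $q$, so all its points satisfy $-\epsilon/2\le\langle\eta,y-q\rangle\le 0$; taking $\eta=\eta_k$ and the two admissible anchors $q=q_i\pm\epsilon\eta_k$ produces caps lying in the disjoint slabs $\langle\eta_k,y-q_i\rangle\in[\epsilon/2,\epsilon]$ and $\langle\eta_k,y-q_i\rangle\in[-3\epsilon/2,-\epsilon]$: the positional uncertainty $\epsilon$ of the net exceeds the cap height $\epsilon/2$, and both extremes genuinely occur for sets in $\mathcal{G}_K(r)$, so ``shrinking'' cannot help. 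The repair — and this is where the passage from $-2\epsilon$ to $-3\epsilon$ and the $\epsilon/2$ in the exponent are really spent — is to define the witness directly from the net indices at positive depth, e.g.\ points $y=q_i-d\,\eta_k+\rho v$, $v\perp\eta_k$, with $d$ in a fixed subinterval of $(\epsilon,2\epsilon)$ and $\rho\le c\sqrt{(\overline{r}-2\epsilon)\epsilon}$, and to verify the two inclusions separately for all admissible $(q,\eta,z)$: containment in $A$ through the tangent ball $B_{\overline{r}-2\epsilon}[q-(\overline{r}-2\epsilon)\eta]$ (which costs a depth buffer of about $\epsilon$ for the translation plus $\epsilon/5$ for the tilt, the direction mesh $\epsilon/(10\overline{r})$ being chosen exactly so that the tilt error over distances $\le 2\overline{r}$ is $\le\epsilon/5$), and containment in $B_{\overline{r}}[z]$ for $z=q+s\eta$, $s\le\overline{r}-3\epsilon$, computed directly (which caps the usable depth at roughly $2\epsilon$ minus the transverse allowance). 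The resulting window in $d$ is nonempty and gives a fixed region of volume of order $(\overline{r}-2\epsilon)^{(d-1)/2}(\epsilon/2)^{(d+1)/2}$; routing the containment through the surface cap $R(q,\eta)$ discards exactly the slack needed to absorb the net error. Two smaller points: interior points $z\in A$ with $0<d(z,\partial A)\le\overline{r}-2\epsilon$ are covered neither by your main case (there $\pi_A(z)=z\notin\partial A$) nor by your parenthetical, although the same construction at the nearest boundary point handles them; and in the final union bound the exponential estimate may only be applied to those net witnesses contained in $K$, as you indeed arrange.
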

$ $\\
\textbf{Funding.} This work has been supported by Project MTM2008-03010 of the Spanish Ministry of Science and Innovation and the IAP network StUDyS (Developing crucial Statistical methods for
Understanding major complex Dynamic Systems in natural, biomedical and social sciences) of the
Belgian Science Policy.\\

 $ $\\
\textbf{References}

\end{document}